\newtheorem{theorem}{Theorem}
\newtheorem{corollary}[theorem]{Corollary}
\newtheorem{remark}[theorem]{Remark}
\newtheorem{lemma}{Lemma}[section]
\newtheorem{problem}{Problem}
\numberwithin{equation}{section}
\renewcommand{\leq}{\leqslant}
\renewcommand{\geq}{\geqslant}
\newcommand{\R}{\mathbb{R}}
\newcommand{\Z}{\mathbb{Z}}
\newcommand{\ii}{\mathbbm{i}}
\begin{document}

\title[Coloring and density theorems for configurations of a given volume]{Coloring and density theorems for configurations of a given volume}

\author[Vjekoslav Kova\v{c}]{Vjekoslav Kova\v{c}}

\address{Department of Mathematics, Faculty of Science, University of Zagreb, Bijeni\v{c}ka cesta 30, 10000 Zagreb, Croatia}

\email{vjekovac@math.hr}


\subjclass[2020]{
Primary
05D10; 
Secondary
52C10, 
28A75, 
42B20} 

\keywords{Euclidean Ramsey theory, geometric measure theory, regularity decomposition, heat flow, multilinear singular integral}

\begin{abstract}
This is a treatise on finite point configurations spanning a fixed volume to be found in a single color-class of an arbitrary finite (measurable) coloring of the Euclidean space $\mathbb{R}^n$, or in a single large measurable subset $A\subseteq\mathbb{R}^n$. More specifically, we study vertex-sets of simplices, rectangular boxes, and parallelotopes, attempting to make progress on several open problems posed in the 1970s and the 1980s. As one of the highlights, we give a negative answer to a question of Erd\H{o}s and Graham, by coloring the Euclidean plane $\mathbb{R}^2$ in $25$ colors without creating monochromatic rectangles of unit area. More generally, we construct a finite coloring of the Euclidean space $\mathbb{R}^n$ such that no color-class contains the $2^m$ vertices of any (possibly rotated) $m$-dimensional rectangular box of volume $1$. A positive result is still possible if rectangular boxes of merely sufficiently large volumes are sought in a single color-class of a finite measurable coloring of $\mathbb{R}^n$, and we establish it under an additional assumption $n\geq m+1$. Also, motivated by a question of Graham on reasonable bounds in his result on monochromatic axes-aligned right-angled $m$-dimensional simplices, we establish its measurable coloring and density variants with polylogarithmic bounds, again in dimensions $n\geq m+1$. Next, we generalize a result of Erd\H{o}s and Mauldin, by constructing an infinite measure set $A\subseteq\mathbb{R}^n$ such that every $n$-parallelotope with vertices in $A$ has volume strictly smaller than $1$. Finally, some results complementing the literature on isometric embeddings of hypercube graphs and on the hyperbolic analogue of the Hadwiger--Nelson problem also follow as byproducts of our approaches.
\end{abstract}

\maketitle

\tableofcontents


\section{Introduction}
\emph{Euclidean Ramsey theory} mainly investigates the existence of a given point configuration in an arbitrary finite coloring of the Euclidean space $\R^n$ or its subsets.
Its systematic study was initiated by Erd\H{o}s, Graham, Montgomery, Rothschild, Spencer, and Straus \cite{Eetal1,Eetal2,Eetal3} in the 1970s.
On the other hand, \emph{geometric measure theory} took a similar route in the 1980s and it started investigating point configurations that are found in arbitrary large measurable subsets of $\R^n$.
Here we call a set $A\subseteq\R^n$ \emph{large} if it has a positive density (as defined below) or, at least, a positive measure. 
Besides obvious connections to the Ramsey theory, the latter topic also shares the aesthetics with Szemer\'{e}di-type theorems from additive combinatorics. In fact, it can even be thought of as arithmetic combinatorics in a geometric setting, typically formulated in a continuous parameter rather than a discrete one.

The present paper studies coloring and density theorems for rather flexible point configurations, from which we only require that they span a body of a given shape and a given volume.
This particular subtopic has been initiated by Graham \cite{Gra80} and Erd\H{o}s \cite{Erd83:open}.
This treatise does not exhaust the subject, but rather concentrates on the simplest configurations, such as simplices, rectangular boxes, and parallelotopes, as they exhibit different behaviors already.
Our main goal is to show that many natural questions (or their variants) posed in the classical collections of open problems \cite{EG80,Erd83:open} (also see \cite{CFG91,GB15}) are manageable by the techniques of harmonic analysis that have been developed in the meantime.
However, quite a few basic questions are still left open.

Let us agree on the terminology. A \emph{finite coloring} of $\R^n$ (or its subset $S$) is any partition of $\R^n$ (or $S$) into finitely many sets $\mathscr{C}_1,\ldots,\mathscr{C}_r$, which are then called \emph{color-classes}.
A set $T$ is \emph{monochromatic} if $T\subseteq\mathscr{C}_j$ for some index $1\leq j\leq r$.
We say that the coloring is \emph{measurable} if each of the color-classes $\mathscr{C}_j$ is a Lebesgue-measurable set. 
Similarly, we say that the coloring is \emph{Jordan-measurable} if the boundary of $\mathscr{C}_j$ has Lebesgue measure $0$ for every index $j$.
Results that hold for an arbitrary finite (measurable) coloring of $\R^n$ (or an $S\subseteq\R^n$), perhaps with a prescribed number of colors, are sometimes referred to as \emph{coloring theorems}.

Next, the Lebesgue measure of a measurable set $A\subseteq\R^n$ (i.e., the length, the area, or, generally, the volume) will simply be written as $|A|$.
If $A$ is a measurable subset of the cube $[0,R]^n$ for some $R>0$, then we will refer to its \emph{density} as the number $|A|/R^n$.
On the other hand, for a general measurable $A\subseteq\R^n$ the relevant notion of largeness is the \emph{upper density}, defined as
\[ \bar{\textup{d}}_{n}(A) := \limsup_{R\to\infty} \frac{|A\cap[-R/2,R/2]^n|}{R^n}. \]
Results that hold for an arbitrary measurable $A\subseteq\R^n$ (or $A\subseteq[0,R]^n$) of strictly positive (upper) density are sometimes called \emph{density theorems}.
A slightly more convenient notion is the \emph{upper Banach density}, 
\[ \bar{\delta}_{n}(A) := \limsup_{R\to\infty} \sup_{x\in\R^n} \frac{|A\cap(x+[0,R]^n)|}{R^n}. \]
It is clearly always true that $\bar{\delta}_{n}(A)\geq\bar{\textup{d}}_{n}(A)$, so we make a density result stronger by assuming that $A$ merely has positive upper Banach density.

The tools from harmonic analysis employed in this work can be traced back to a paper by Bourgain \cite{B86:roth} and they have been strengthened or modified by a number of authors over the years. Some of the work that has been influential to the author of the present paper had been done by Lyall and Magyar \cite{LM16:prod,LM19:hypergraphs,LM20} and by Cook, Magyar, and Pramanik \cite{CMP15:roth}. 
Instead of listing the extensive literature we refer the reader to a partially expository paper \cite{K20:anisotrop} or a brief survey of recent results \cite{Kovac:survey}.
Harmonic analysis tools are well-suited for establishing density theorems in a fixed ambient dimension $n$, which, in turn, imply their measurable coloring variants (cf.\@ Theorems \ref{thm:simpldensity}, \ref{thm:boxdensity}, and \ref{thm:spacetimedensity} below). Indeed, if $\mathscr{C}_1,\ldots,\mathscr{C}_r$ is any measurable coloring of $\R^n$, then
\[ \bar{\delta}_{n}(\mathscr{C}_1) + \cdots + \bar{\delta}_{n}(\mathscr{C}_r) \geq \bar{\delta}_{n}(\R^n) = 1, \]
so we must have 
\[ \bar{\delta}_{n}(\mathscr{C}_j)\geq \frac{1}{r}>0 \] 
for at least one index $1\leq j\leq r$.
(An analogous claim holds for measurable colorings of the cube $[0,R]^n$.)
Contrapositively, if we construct a measurable counterexample to a potential coloring result, then the corresponding density result is not possible either (cf.\@ Theorems \ref{thm:colorR2}, \ref{thm:colorRn}, \ref{thm:negparallel}, and \ref{thm:hypercubes} below).

Admittedly, the measurability assumption is a seriously restricting one.
There are many examples in the literature where the coloring result becomes significantly easier when only measurable colorings are considered; see Soifer's book \cite[Ch.~9]{Soi24:book}.
For instance, the well-known \emph{Hadwiger--Nelson problem} asks for a minimal number of colors needed to color $\R^2$ so that all pairs of points at distance $1$ apart receive different colors.
It is easy to see that $7$ colors suffice.
The exact minimal number has naturally been called the \emph{chromatic number of the plane}.
Back in 1981, Falconer \cite{Fal81} showed that the measurable chromatic number of $\R^2$ is at least $5$, but it took another $37$ years until de Grey \cite{Grey18} proved the same for the ordinary chromatic number.
Also, from an old result by Furstenberg, Katznelson, and Weiss \cite{FKW90:dist} it follows that a measurable set $A\subseteq\R^2$ satisfying $\bar{\textup{d}}_{2}(A)>0$ contains pairs of points at every sufficiently large distance from each other.
This has been reproved independently by Falconer and Marstrand \cite{FM86:dist} and also by Bourgain \cite{B86:roth}.  
As opposed to that, only recently Davies \cite{Dav24} solved a 1994 conjecture of Rosenfeld \cite[\S 60.1]{Soi24:book} (popularized by Erd\H{o}s), by showing that there exists a pair of points at an odd distance from each other for every (not necessarily measurable) finite coloring of $\R^2$. 
This was generalized to prime and polynomial distances by Davies, McCarty, and Pilipczuk \cite{DCP23}. A question by Kahle about distances of the form $n!$ \cite[Problem 60.14]{Soi24:book} and the one by Soifer on distances of the form $2^n$ \cite[Problem 60.16]{Soi24:book} remain open.


\subsection{Triangles and simplices}
\label{subsec:simplices}
Graham \cite{Gra80} answered positively a question of Gurevich, who asked whether, for positive integers $2\leq m\leq n$ and every finite coloring of $\R^n$, there exists an $m$-dimensional simplex of volume $1$ with all of its $m+1$ vertices colored the same \cite[p.~89]{Gra80}.
In fact, Graham could choose the simplices to be right-angled with axes-aligned perpendicular edges, i.e., the edges coming from a single vertex are parallel to the coordinate axes.
Graham combined his observations with a general product theorem for colorings \cite[Theorem 3]{Gra80}, which he developed particularly for this purpose; also see the generalization that he proved jointly with Spencer \cite{GS79}. His most general coloring theorem for simplices, namely \cite[Theorem 4]{Gra80}, reads as follows.

\begin{quote}
\emph{For all finite colorings of $\R^n$, some color-class contains, for all $V>0$ and lines $\ell_1,\ldots,\ell_n$ which span $\R^n$, a simplex having volume $V$ and edges through one vertex parallel to the $\ell_i$.}
\end{quote}

Indeed, the ``slanted'' variant of this result is clearly equivalent to the axes-aligned one, simply by applying a volume-preserving linear transformation. 
Several variants of Graham's result were studied in \cite{DJ10,AR12,AC14}.

By inspecting Graham's proof (already for triangles) one notices that he first works on a huge part of the integer lattice $\Z^n$ \cite[Theorem 1]{Gra80}, the size of which is an iterated van der Waerden number in the number of colors used. Then he scales the lattice to obtain the result in $\R^n$, but also wonders \cite[p.~97]{Gra80} if a proof using only a reasonable number of integer points (or, strictly speaking, only reasonably large integer-point triangles) is possible. 
A natural continuous-parameter version of Graham's question is the one that considers only colorings of a sufficiently large cube $[0,R]^n\subseteq\R^n$.

\begin{problem}
Let $m,n$ be fixed positive integers such that $n\geq m\geq 2$.
Is there a reasonable lower bound (i.e., not of the Ackermann type) on the number $R=R(r)\in(0,\infty)$ such that, for every coloring of the $n$-cube $[0,R]^n$ in $r$ colors, there exists a right-angled $m$-dimensional simplex of unit volume with monochromatic vertices?
\end{problem}

We are able to give the positive answer for measurable colorings only and under the additional assumption $n\geq m+1$ in part (b) of the following theorem. At the same time, we establish its corresponding density variant in part (a).

\begin{theorem}\label{thm:simpldensity}
For every integer $m\geq2$ there exists a constant $C_m\in(0,\infty)$ with the following properties for every $n\geq m+1$.
\begin{itemize}
\item[(a)] If $R\in(1,\infty)$ and $A\subseteq[0,R]^n$ is a measurable set with density
\begin{equation}\label{eq:conddens}
\frac{|A|}{R^n} \geq \Big(\frac{C_m}{\log R}\Big)^{1/(9m^2)},
\end{equation}
then $A$ contains $m+1$ vertices of a right-angled $m$-dimensional simplex of unit volume.
\item[(b)] If $R\in(0,\infty)$ and if the $n$-cube $[0,R]^n$ is measurably colored in $r$ colors, then
\[ R \geq \exp\big(C_m r^{9m^2}\big) \]
is sufficient to guarantee the existence of a right-angled $m$-dimensional simplex of unit volume with monochromatic vertices.
\end{itemize}
In both parts of the theorem, the $m$-simplex can be chosen such that $m-1$ of its edges from the right-angled vertex are parallel to the coordinate vectors $\mathbbm{e}_1,\ldots,\mathbbm{e}_{m-1}$, while the remaining edge is parallel to the linear span of $\mathbbm{e}_m,\ldots,\mathbbm{e}_n$. 
\end{theorem}

\begin{figure}
\includegraphics[width=0.5\linewidth]{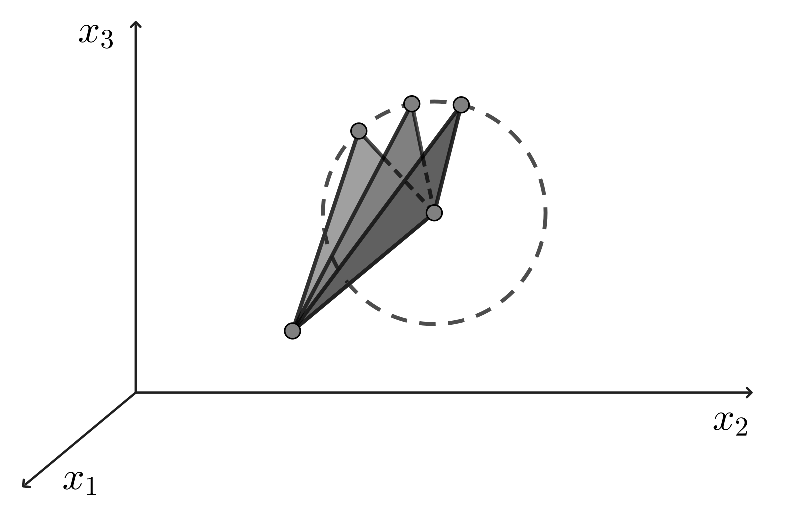}
\caption{Illustration of Theorem \ref{thm:simpldensity}.}
\label{fig:colfig7}
\end{figure}

Some possible positions of such right triangles in $\R^3$ are depicted in Figure \ref{fig:colfig7}. Clearly, part (b) is a consequence of part (a), since, by our introductory comments, at least one color-class has density $\delta$ at least $1/r$ and since condition \eqref{eq:conddens} can be rewritten as
\[ R \geq \exp\big(C_m \delta^{-9m^2}\big) \]
in terms of the density $\delta$.
The emphasis in Theorem \ref{thm:simpldensity} is on its effective bounds: a polylogarithmic one in part (a) and an exponential one (in the power of $r$) in part (b). 
The order of quantifiers is important: we want a density theorem in a fixed dimension $n$. It is not difficult to fix a number $\delta>0$ and find a sufficiently large $n$ such that the result holds for regular simplices in subsets of $\R^n$ with density at least $\delta$. In this setting, quantitative interdependence of $n$ and $\delta$ has been studied by Castro-Silva, de Oliveira Filho, Slot, and Vallentin \cite{COSV22}.

Graham suggested (without sharing many details) the possibility of a density variant of his discrete result \cite[Theorem 1]{Gra80} in Concluding Remarks \cite[p.~96]{Gra80}, but he mentioned general triangles only, i.e., not axes-aligned or even right-angled ones, which makes the problem easier.
Indeed, the corresponding analogue in the plane $\R^2$ is, however, quite easy: it is an old observation of Erd\H{o}s, first published as an exercise in the popular magazine \emph{Matematikai Lapok}; see the comments in \cite[p.~122]{Erd78}.

\begin{quote}
\emph{A measurable set $A\subseteq\R^2$ of infinite area (and even an unbounded set of positive area) necessarily contains the vertices of a triangle of area $1$.}
\end{quote}

The proof is a simple consequence of the one-dimensional Steinhaus theorem \cite{Ste20} and it was elaborated on in \cite[p.~182]{CFG91}.
The corresponding problem for right-angled triangles was mentioned by Erd\H{o}s \cite[p.~324]{Erd83:open} and attributed to him and Mauldin.

\begin{problem}[Erd\H{o}s and Mauldin \cite{Erd83:open}]
\label{prob:triangleinfinity}
Let $A\subseteq\mathbb{R}^2$ be a measurable set with infinite area. Must $A$ contain the vertices of a right triangle of area $1$?
\checkmark
\end{problem}

Very recently Koizumi \cite{Koi25} answered this question positively, claiming to have been motivated by the ideas from a recent paper by Predojevi\'{c} and the present author \cite{KP25}. The techniques from \cite{KP25,Koi25} rely on the Lebesgue density theorem and thus cannot recover any of the inherently quantitative results of the present paper. Another four questions by Erd\H{o}s and Mauldin (on other shapes) similar to Problem \ref{prob:triangleinfinity} can be found in \cite[p.~323--324]{Erd83:open} or gathered under a single problem on Bloom's website \emph{Erd\H{o}s problems} \cite[\#353]{EP}.
They were all answered either in \cite{KP25} or in \cite{Koi25}. 

In the last problem we cannot also ask from the triangle to be axes-aligned.
Indeed, the example of a rotated thin infinite strip/tube shows that, if $A\subseteq\R^n$ merely has infinite volume, then the perpendicular sides of right-angled unit triangles/simplices with vertices in $A$ can avoid any prescribed tuple of orthogonal directions.
The same example also shows that the simplices with vertices in a set $A$ of merely positive area can all have disproportional ratios of their edge-lengths.
For this reason we find non-trivial the following consequence of Theorem \ref{thm:simpldensity}(a), which is quantitative and it deals with sets of positive upper Banach density. 

\begin{corollary}\label{cor:simpldensity}
Let $m\geq2$ and $n\geq m+1$ be positive integers. There exists a constant $C'_m\in(0,\infty)$, depending only on $m$, with the following property: if a measurable set $A\subseteq\R^n$ satisfies $\bar{\delta}_{n}(A)>0$, then for every number $V>0$ there exists a right-angled $m$-dimensional simplex of $m$-volume $V$ with all $m+1$ vertices in $A$ and with the ratio of lengths of any two of its perpendicular edges being at most $\exp(C'_m\bar{\delta}_{n}(A)^{-9m^2})$. 
\end{corollary}

An interesting related problem is an old question of Erd\H{o}s on triangles in planar sets $A$ of merely sufficiently large finite area. 

\begin{problem}[Erd\H{o}s \cite{Erd78}]
\label{prob:trianglelarge}
Is it true that there is an absolute constant $C$ so that, if $A\subseteq\R^2$ has area greater than $C$, then $A$ contains the vertices of a triangle of area $1$? 
\end{problem}

It seems that Erd\H{o}s asked this question first in \cite[p.~122--123]{Erd78}, and then repeated it in \cite[p.~30]{Erd:Scottish} and \cite[p.~323]{Erd83:open}.
Problem \ref{prob:trianglelarge} has since been featured in numerous collections of open problems, such as the book \emph{Unsolved problems in geometry} by Croft, Falconer, and Guy \cite[Problem G13]{CFG91}, the website \emph{Erd\H{o}s problems} \cite[\#352]{EP}, or the expository paper by Mauldin \cite{Mau02}.
The techniques in this paper are insufficient to detect configurations in ``scattered'' sets that are only known to have large measure.


\subsection{Rectangles and rectangular boxes}
\label{subsec:rectangles}
Graham \cite{Gra80} naturally wondered if his result that we discussed in the previous section
\begin{quote}
(\ldots) \emph{can be generalized to configurations which are not simplexes.} \cite[p.~96]{Gra80}.
\end{quote}
Erd\H{o}s and Graham posed a natural follow-up question in their well-known problem book \cite{EG80}.
\begin{quote}
\emph{The question is: Is this also true for rectangles? Or perhaps parallelograms?} \cite[p.~15]{EG80}.
\end{quote}
To the best of the author's knowledge, these problems, either for rectangles or for parallelograms, have not been addressed in the literature so far.
They were explicitly mentioned as being open in the papers by Adhikari and Rath \cite{AR03,AR12} and in \emph{Mathematical Reviews} MR0558877 by K. Steffens and MR2106573 by S. Oates-Williams.
They were also recently stated among the open questions in the 2015 edition of the book \emph{Rudiments of Ramsey theory} by Graham and Butler \cite[p.~56]{GB15}.

Let us first discuss the variant for rectangles. 
The following standalone wording of the problem is taken from Bloom's website \emph{Erd\H{o}s problems} \cite[\#189]{EP}.

\begin{problem}[Erd\H{o}s and Graham \cite{Gra80,EG80}]
\label{prob:main}
If $\mathbb{R}^2$ is finitely colored then must there exist some color-class which contains the vertices of a rectangle of every given area? \checkmark
\end{problem}

Note that the rectangles can be arbitrarily rotated.
We are able to show that Problem \ref{prob:main} has a negative answer.

\begin{theorem}\label{thm:colorR2}
There exists a Jordan-measurable coloring of the plane in $25$ colors such that no color-class contains the vertices of a rectangle of area $1$. 
\end{theorem}

With a bit more work we will also establish a natural generalization to higher dimensions; see Theorem \ref{thm:colorRn} below. However, we have singled out the two-dimensional case in Theorem \ref{thm:colorR2} above, because its proof is elegant and constructive.

\begin{theorem}\label{thm:colorRn}
For every positive integer $n$ there exists a finite Jordan-measurable coloring of the Euclidean space $\mathbb{R}^n$ with the following property: for every positive integer $m\leq n$ there is no $m$-dimensional rectangular box of $m$-volume equal to $1$ with all of its $2^m$ vertices colored the same. 
\end{theorem}

The order of quantifiers is important again. The analogue of Theorem \ref{thm:colorRn} would not hold and Theorem \ref{thm:boxdensity} below would become trivial if the ambient dimension $n$ could depend on the number of colors used. One could find sufficiently large $n$ simply by the known fact that vertices of a unit cube (or any other rectangular box) form a so-called \emph{Ramsey configuration}; see \cite{Eetal1}.

The minimal number of colors enabling a coloring of $\mathbb{R}^n$ with the property from Theorem \ref{thm:colorRn} grows at least exponentially in $n$.
This is seen by taking $m=1$ and consulting the literature on the higher-dimensional Hadwiger--Nelson problem, a.k.a.\@ the chromatic number of $\mathbb{R}^n$; see \cite{FW81}.
On the other hand, we will see that the number of colors needed in the proof presented in Section \ref{sec:Rn} grows superexponentially in $n$. It would be interesting to obtain reasonably sharp bounds on the cardinality of the minimal coloring.

After the negative result in Theorem \ref{thm:colorRn} it is perhaps still possible that, for every finite coloring of $\R^n$, there exist monochromatic $m$-dimensional rectangular boxes of all sufficiently large $m$-volumes.
We are able to show that this is indeed the case as long as $n\geq m+1$ and only measurable colorings are considered.
In fact, we can even prove a density version of that result thanks to the fact that the techniques for studying large copies of finite configurations in sets of positive density have been developed substantially over the last $40$ years (cf.\@ the literature mentioned at the beginning of the introductory section).

\begin{theorem}\label{thm:boxdensity}
Take positive integers $m$ and $n$ such that $n\geq m+1$.
\begin{itemize}
\item[(a)] If $A\subseteq\R^n$ is a measurable set such that $\bar{\delta}_n(A)>0$, then there exists a number $V_0>0$ (depending on the set $A$) such that for every number $V\geq V_0$ there exists an $m$-dimensional rectangular box of $m$-volume $V$ with all $2^m$ vertices in the set $A$.
\item[(b)] For every finite measurable coloring of $\R^n$ there exist a color-class $\mathscr{C}$ and a number $V_0>0$ such that for every number $V\geq V_0$ there exists an $m$-dimensional rectangular box of $m$-volume $V$ with all vertices in $\mathscr{C}$.
\end{itemize}
In both parts of the theorem, rectangular boxes can be chosen to have $m-1$ edges parallel to the first $m-1$ coordinate vectors and the last edge parallel to the linear span of the remaining $n-m+1$ coordinate vectors.
\end{theorem}

Theorem \ref{thm:boxdensity} is only new when the ambient dimension $n$ satisfies $m+1\leq n\leq 2m-1$.
It follows from the existing literature in the cases $n\geq 2m$, as Lyall and Magyar \cite[Theorem 1.1(i)]{LM19:hypergraphs} showed that a set $A\subseteq\R^{2m}$ of positive upper Banach density contains the vertices of an $m$-cube with every sufficiently large edge-length.
The critical case $n=m\geq2$ is not covered by our theorem and it remains open.
The analogous result for $n=m=1$ clearly fails, as is seen by taking $A=(-1/5,1/5)+\mathbb{Z}$.
Thus, the following problem remains.

\begin{problem}\label{prob:5}
If $n\geq2$ and $\mathbb{R}^n$ is measurably colored in finitely many colors, must there exist a color-class which contains the $2^n$ vertices of a rectangular box of every sufficiently large volume?
\end{problem}

It is hard to say if Problem \ref{prob:5} is within the reach of harmonic analysis techniques, but it is expected to be difficult. Namely, when one sets up the harmonic analysis ``machinery'' it seems that the relevant analytical objects are just slightly ``too singular'' to be handled using the currently known ideas.
Also, its case $n=2$ is formally similar to Problem \ref{prob:7} below, for triangles in the plane, which is open as well. 
It is also unclear if the measurability condition can be removed from the statement of Theorem \ref{thm:boxdensity}(b).


\subsection{Parallelograms and parallelotopes}
The situation changes radically if the boxes are ``slanted'', i.e., replaced with parallelograms, parallelepipeds, or, more generally, parallelotopes.
Currently, we are not able to solve the parallelogram variant of Problem \ref{prob:main}.

\begin{problem}[Erd\H{o}s and Graham \cite{Gra80,EG80}]
\label{prob:parallel}
If $\mathbb{R}^2$ is finitely colored then must there exist some color-class which contains the vertices of a parallelogram of every given area?
\end{problem}

Unable to answer the question in Problem \ref{prob:parallel}, we only state a partial result that will be obtained by easily modifying the ideas used for Theorem \ref{thm:colorR2}.

\begin{theorem}\label{thm:negparallel}
Suppose that we are given finitely many lines, $\ell_1,\ldots,\ell_m\subset\R^2$, and a positive number $\varepsilon$. There exists a Jordan-measurable coloring of $\mathbb{R}^2$ such that there is no parallelogram of area $1$ with monochromatic vertices that, additionally, has one side parallel to some line $\ell_i$ or it has all angles greater than $\varepsilon$.
\end{theorem}

In other words, one should concentrate on finding almost degenerate parallelograms and infinitely many directions should be considered.
For these reasons Problem \ref{prob:parallel} could be difficult, especially if it turns out that the claim has a positive answer.

As a related observation, Erd\H{o}s remarked \cite[p.~324]{Erd83:open} (without details or a reference) that he and Mauldin constructed a set $A\subseteq\R^2$ of infinite area that does not contain vertices of a parallelogram of area $1$ (also see the comments under Problem \cite[\#353]{EP}).
Even if this has no implications to finite colorings of $\R^2$ or to positive density subsets of $\R^2$, it still hints that sets that avoid the vertices of parallelograms of unit area can be quite large.
Without knowing which set Erd\H{o}s and Mauldin had in mind, we observe that the example can be quite simple: an $(x,y)$-region between the positive parts of the coordinate axes $x=0$ and $y=0$ and the hyperbola $2xy=1$.
In fact, we will easily prove the following stronger higher-dimensional result.

\begin{theorem}\label{thm:parallel}
Let $n\geq2$ be a positive integer and take $\varepsilon>0$. There exists a Jordan-measurable set (i.e., a set with boundary of measure $0$) $A\subseteq\mathbb{R}^n$ of infinite volume such that every $n$-dimensional parallelotope with all $2^n$ vertices in $A$ has volume less than $\varepsilon$.
\end{theorem}

One has to note that Theorem \ref{thm:parallel} is a rather simple result, using a more straightforward construction than Theorem \ref{thm:colorRn}.


\subsection{Distance graphs in the plane}
\label{subsec:dgraphs}
Here we discuss two lines of research that, at first, do not seem to be related to the paper topic, as they will be concerned with certain non-rigid configurations, as opposed to polytopes.
However, it will turn out, quite surprisingly, that Theorem \ref{thm:hypercubes} below can be proved simply by modifying the proof of Theorem \ref{thm:colorR2} from Subsection \ref{subsec:rectangles}, while the configuration of interest in Theorem \ref{thm:bmk} below will be obtained by taking two out of three vertices of Graham's axes-aligned triangle from Subsection \ref{subsec:simplices} and performing a clever change of variables appearing in \cite{BMK17} and \cite{Dav24a}.
Thus, the study of these flexible configurations fits naturally to the present paper as well and we will be able to reuse the same techniques applied to our previous results, both positive and negative ones.

Lyall and Magyar \cite{LM20} initiated the study of embeddings of distance graphs in measurable sets $A\subseteq\R^n$ of positive upper Banach density.
The most definite embedding result is available for distance trees, which were also studied (in similar contexts) in \cite{Bul18,IT19,LMtr20}, so let us spend a few words to formulate it here.

Let $\mathcal{T}=(V,E)$ be a finite tree with the set of vertices $V$ and the set of edges $E$. 
To every edge $k\in E$ we assign a number $a_k\in(0,\infty)$ interpreted as \emph{edge-length}, so we call the resulting structure a \emph{distance tree}. 
It follows from a more general result by Lyall and Magyar \cite[Theorem 2(i)]{LM20} that
\begin{quote}
\emph{for every distance tree $\mathcal{T}$ and every set $A\subseteq\R^2$ of positive upper Banach density, there exists a number $\lambda_0>0$ (depending on $\mathcal{T}$ and $A$) such that for every number $\lambda\geq\lambda_0$ the set $A$ contains the vertices of an isometric copy of $\mathcal{T}$ scaled by factor $\lambda$.}
\end{quote}
Note that a non-scaled tree (or any other graph) need not be isometrically embeddable in $A$, simply because there is no reason why $A$ should contain two points at distance $1$ apart (cf.\@ the Hadwiger--Nelson problem mentioned before).
The above result is also dimensionally sharp: sufficiently large distances need not be found in a positive density subset of $\R$; take $A=[-1/5,1/5]+\Z$.


\subsubsection{Embeddings of hypercube graphs}
Embeddings of more general distance graphs are much more complicated and only partially resolved.
Whenever the graph has a cycle, one can choose its edge-lengths to violate the triangle inequality, so it will not be embeddable in the Euclidean space. 
Lyall and Magyar \cite{LM20} resolved this problem by starting from a distance graph that already is embedded in some other Euclidean space.
Then they gave sufficient conditions for it to have all large dilates found in every set $A\subseteq\R^d$ of positive upper Banach density. However, even when their \cite[Theorem 2]{LM20} applies, it is rarely known what the minimal ambient dimension $d$ can be.
Surprisingly, this is open already in the case of the equilateral (or any other non-degenerate) triangle.

\begin{problem}[implicit in \cite{FKW90:dist}, stated as {\cite[Problem G14]{CFG91}}]\label{prob:7}
Do all sets $A\subseteq\R^2$ of positive upper density contain a congruent copy of the vertices of every sufficiently large equilateral triangle?
\end{problem}

Note that the corresponding result for triangles in $\R^3$ is known, as it is a special case of Bourgain's \cite[Theorem 2]{B86:roth}.

Here we study a particular case of a graph, which is called a \emph{hypercube graph} in the combinatorial sense, but as a distance graph it is better thought of as a $1$-skeleton of an $n$-dimensional box
\begin{equation}\label{eq:nbox} 
[0,a_1]\times[0,a_2]\times\cdots\times[0,a_n].
\end{equation}
Its embedding in $\R^2$ is simply a set of $2^n$ mutually distinct points
\begin{equation}\label{eq:1skeleton}
z + r_1 u_1 + r_2 u_2 + \cdots + r_n u_n \quad\text{for } (r_1,r_2,\ldots,r_n)\in\{0,1\}^n
\end{equation}
such that $z\in\R^2$ and $u_1,\ldots,u_n\in\R^2$ are vectors with lengths $|u_k|=a_k$ for $1\leq k \leq n$; see Figure \ref{fig:colfig5}. 

\begin{figure}
\includegraphics[width=0.65\linewidth]{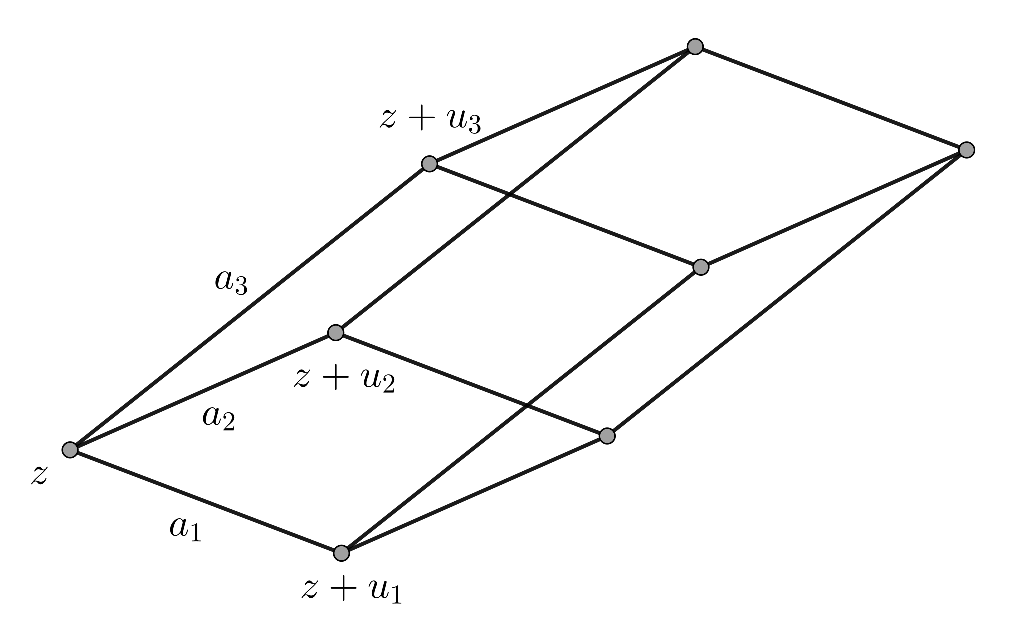}
\caption{Embedding of a $1$-skeleton of an $n$-box.}
\label{fig:colfig5}
\end{figure}

This case of an embedding in $\R^2$ was not covered by \cite[Theorem 2]{LM20}; their result would require the ambient dimension to be at least $n+1$.
Still, Predojevi\'{c} and the author \cite[Theorem 1 and Remark 3]{KP23} showed that 
\begin{quote}
\emph{a measurable subset $A\subseteq\R^2$ of positive upper Banach density contains embeddings of all sufficiently large dilates of a fixed $1$-skeleton of an $n$-box; for instance, in $A$ we can find points \eqref{eq:1skeleton} with $|u_k|=a_k=\lambda$ for all sufficiently large numbers $\lambda\in(0,\infty)$.}
\end{quote}
This result generalizes the aforementioned result on distance trees, since every tree is a subgraph of a hypercube graph.
A natural question is what happens when we assume a more flexible constraint relating the edge-lengths than $a_1=a_2=\cdots=a_n=\lambda$, such as the product of the numbers $a_k$ being $1$.

\begin{theorem}\label{thm:hypercubes}
Fix a positive integer $n$.
There exists a Jordan-measurable finite coloring of $\mathbb{R}^2$ such that no color-class contains an embedding \eqref{eq:1skeleton} of the $1$-skeleton of an $n$-box \eqref{eq:nbox} satisfying $a_1\cdots a_n=1$.
\end{theorem}

In particular, a measurable set $A\subseteq\R^2$ with $\bar{\delta}_2(A)>0$ need not contain such an embedding.
Theorem \ref{thm:hypercubes} will be shown by a simple modification of the construction used to prove Theorem \ref{thm:colorR2} above.
This might sound a bit surprising, since the quantity $a_1\cdots a_n$ is the volume of the original box \eqref{eq:nbox}, but it has no such interpretation for an embedding of its $1$-skeleton \eqref{eq:1skeleton}.
This, however, leads to structural differences between embeddings of parallelotopes of unit volume and their partial skeletons with edge-lengths multiplying to $1$, but we provided negative results for both. Unit volume rectangular boxes lie at their intersection, which enabled us to obtain the strongest result on their avoidance, namely Theorem \ref{thm:colorRn}.


\subsubsection{Hyperbolic embeddings}
Bardestani and Mallahi-Karai \cite{BMK17} studied a generalization of the Hadwiger--Nelson problem in which the points $v$ and $w$ of the same color are not allowed to satisfy the equation $Q(v-w)=1$, where $Q$ is a quadratic form over some local field.
Under the additional measurability assumption, they were able to characterize when the corresponding chromatic number is infinite.
Bardestani also informed the author that an easy modification of the argument from \cite{BMK17}, which in turn uses a bound on the so-called independence ratio by Bachoc, DeCorte, de Oliveira Filho, and Vallentin \cite{BCFV14}, also implies a density variant of the same result in the plane $\R^2$ in the case of real hyperbolic forms; see Theorem \ref{thm:bmk} below.
Davies \cite{Dav24a} was motivated by a few unresolved questions in the particular hyperbolic case
\[ Q\colon\R^2\times\R^2 \to \R,\quad Q((x,y),(x',y')) = (x-x')^2 - (y-y')^2 \]
and its higher-dimensional space-time generalizations.
He observed that Graham's result on right triangles is relevant in that matter and it already implies that the corresponding (non-measurable) chromatic number is infinite as well.
Namely, if $(u,v)$, $(u',v')$, $(u',v)$ are vertices of an area $1$ right triangle with $u>u'$ and $v<v'$, then the points
\[ \begin{pmatrix} x \\ y \end{pmatrix}
= \frac{1}{2\sqrt{2}} \begin{pmatrix} 1 & -1 \\ 1 & 1 \end{pmatrix} 
\begin{pmatrix} u \\ v \end{pmatrix},\quad
\begin{pmatrix} x' \\ y' \end{pmatrix}
= \frac{1}{2\sqrt{2}} \begin{pmatrix} 1 & -1 \\ 1 & 1 \end{pmatrix} 
\begin{pmatrix} u' \\ v' \end{pmatrix} \]
satisfy $Q((x,y),(x',y'))=1$.

It is natural to consider a coloring of a large square $[0,R]^2$ and impose bounds on $R$ sufficient to find equally colored points $(x,y)$ and $(x',y')$ such that
\begin{equation}\label{eq:hyperdiffer}
(x-x')^2 - (y-y')^2 = 1.
\end{equation}
The arguments of Bardestani and Mallahi-Karai \cite[p.~325--327]{BMK17} are also quantitative and they showed that this is true for measurable colorings in $r$ colors under the condition of the form $R\geq\exp(Cr)$. 
Let us formulate this result in the spirit of Theorem \ref{thm:simpldensity}.

\begin{theorem}[Bardestani and Mallahi-Karai \cite{BMK17}]\label{thm:bmk}
There exists a constant $C\in(0,\infty)$ with the following properties.
\begin{itemize}
\item[(a)] If $R\in(1,\infty)$ and $A\subseteq[0,R]^2$ is a measurable set with density
\[ \frac{|A|}{R^2} \geq \frac{C}{\log R}, \]
then there exist $(x,y),(x',y')\in A$ such that \eqref{eq:hyperdiffer} holds.
\item[(b)] If $R\in(0,\infty)$ and if the square $[0,R]^2$ is measurably colored in $r$ colors, then
\[ R \geq e^{C r} \]
guarantees that there exist equally colored points $(x,y)$ and $(x',y')$ satisfying \eqref{eq:hyperdiffer}.
\end{itemize}
\end{theorem}

In comparison, the argument of Davies \cite{Dav24a} based on Graham's theorem applies to non-measurable colorings too, but it requires an Ackermann-type bound on $R$ in terms of $r$. 

We remark, once again, that Theorem \ref{thm:bmk} is not an original result that we show here. As we have mentioned, it was not explicitly formulated in \cite{BMK17}, but it can be deduced rather easily from that paper, as observed by its authors.
Our techniques are better suited to larger patterns, so we show the following generalization to a configuration of $n+1$ points, with a slightly weaker bound for $n=1$ than Theorem \ref{thm:bmk}.
Let $\mathscr{H}$ denote the standard hyperbola in $\R^2$ with the Cartesian equation 
\[ \mathscr{H}: \quad x^2-y^2=1. \]
It will replace the standard unit circle, $x^2+y^2=1$, which plays a role in the usual Euclidean graph embedding theorems.

\begin{theorem}\label{thm:spacetimedensity}
For every positive integer $n$ and distinct numbers $a_1,\ldots,a_n\in(0,\infty)$ there exists a constant $C\in(0,\infty)$ with the following properties.
\begin{itemize}
\item[(a)] If $R\in(1,\infty)$ and $A\subseteq[0,R]^2$ is a measurable set with density
\[ \frac{|A|}{R^2} \geq \Big(\frac{C}{\log R}\Big)^{1/(2n+1)}, \]
then there exist $z\in\R^2$ and vectors $v_1,\ldots,v_n\in\mathscr{H}$ such that the $n+1$ points
\begin{equation}\label{eq:hyperpoints}
z,\, z+a_1v_1,\, \ldots,\, z+a_nv_n
\end{equation}
all belong to the set $A$.
\item[(b)] If $R\in(0,\infty)$ and if the square $[0,R]^2$ is measurably colored in $r$ colors, then
\[ R \geq \exp\big(C r^{2n+1}\big) \]
guarantees that there exist $n+1$ equally colored points of the form \eqref{eq:hyperpoints} for some $z\in\R^2$ and vectors $v_1,\ldots,v_n\in\mathscr{H}$.
\end{itemize}
\end{theorem}

\begin{figure}
\includegraphics[width=0.55\linewidth]{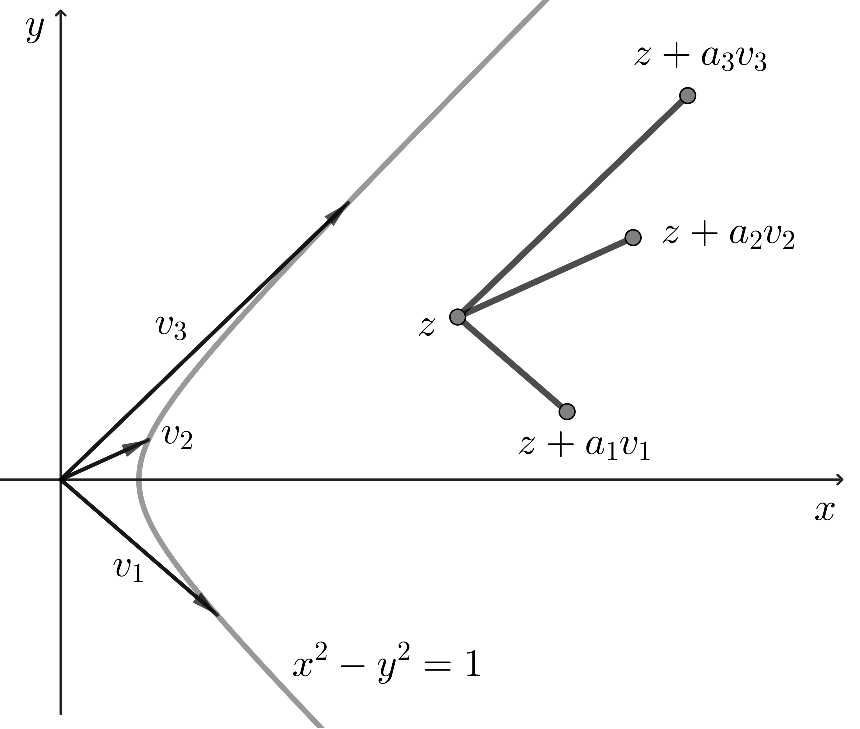}
\caption{Illustration of Theorem \ref{thm:spacetimedensity}.}
\label{fig:colfig8}
\end{figure}

In the language of distance trees introduced at the beginning of Subsection \ref{subsec:dgraphs}, we are looking for certain hyperbolic embeddings of \emph{star trees} (or simply \emph{stars}).
Part (b) is, once again, a clear consequence of part (a).
We will use a similar outline and the same ideas employed in the proofs of Theorems \ref{thm:simpldensity} and \ref{thm:boxdensity} to establish Theorem \ref{thm:spacetimedensity} too.
See Figure \ref{fig:colfig8} for an illustration of the case $n=3$. 

Note that, in this hyperbolic setting, we are not allowing the scaling of the hyperbola $\mathscr{H}$ by some factor $\lambda$.
It is a non-compact curve, so it leaves more freedom than the circle to find non-scaled copies of the desired configuration.
For this reason, as it was discussed in \cite{BMK17}, the result analogous to Theorem \ref{thm:spacetimedensity} is possible for every indefinite quadratic form in place of $Q(x,y)=x^2-y^2$, but not for semi-definite ones.

\begin{remark}\label{rem:onstars}
As a consequence of Theorem \ref{thm:spacetimedensity}(a), one immediately obtains the same property of sets $A\subseteq\R^2$ of positive upper Banach density.
Clearly, it is not enough for $A$ to merely have infinite measure, as is seen from the trivial example: $n=1$, $a_1=1$, and $A=[-1/3,1/3]\times\R$.
\end{remark}

To avoid possible confusion, let us emphasize that this subsection did not study configurations determined by fixing some hyperbolic distances---it was ``hyperbolic'' rather in the sense of the Minkowski spacetime. Adapting our techniques to hyperbolic geometry would be a different task that we do not attempt in the present paper.


\subsection{Paper outline and the main ideas}
Each of Sections \ref{sec:R2}--\ref{sec:topes} and \ref{sec:proofsimpl}--\ref{sec:prooftrees} is dedicated to the proof of a single theorem. 
We first establish the ``negative'' results, since they are more elementary.
Theorem \ref{thm:colorR2} relies on an explicit partition of the complex plane $\mathbb{C}$, coloring the number $z$ based on the relative location of $z^2$ to the (scaled) Gaussian integers $\mathbb{Z}+\ii\mathbb{Z}$.
The construction could be thought of as a complex modification of the approach of Erd\H{o}s at al.\@ \cite[\S3]{Eetal1} who used $|z|^2$ in place of $z^2$.
The proof of Theorem \ref{thm:negparallel} uses the same idea, only with roughly $1/\sin\varepsilon$ many colors, while the proof of Theorem \ref{thm:hypercubes} extends it to higher complex powers, namely $z^n$ for $n\geq3$.
The proof of Theorem \ref{thm:colorRn} proceeds differently, by constructing the coloring that prevents monochromatic boxes that are only slightly rotated from their ``standard'' position and then using compactness of the group of rotations $\textup{SO}(n)$. 
One could say that the main novelty here is the construction of an almost invariant quantity for the boxes of the aforementioned type and not insisting on finding the exact invariant, which might not even exist in full generality.
The trick of using compactness of the underlying transformation group is essentially due to Straus~\cite{Str75}, but in a somewhat different setting. Also, its applicability is far from automatic: for instance, it can be used for rectangles but not for general parallelograms.
Finally, Theorem \ref{thm:parallel} has a rather straightforward inductive proof, by estimating the determinant representing the volume of a parallelotope.

Proofs of the ``positive'' density results rely on harmonic analysis, so a short Section \ref{sec:harm} serves as an interlude, containing a few basic facts and some notation.
The proof of Theorem \ref{thm:boxdensity} follows a similar outline as \cite[\S7]{DK22} by Durcik and the author, which, in turn, stems from a general outline of performing a certain regularity decomposition of the counting form \cite{B86:roth,Buk08,CMP15:roth}.
Similar proofs also appeared in a couple of other recent papers \cite{K20:anisotrop,KP23}, but here we also capitalize on the fact that the ``fixed volume'' condition is more flexible than fixing edge-lengths, so a lower ambient dimension suffices for that reason. 
The proof of Theorem \ref{thm:simpldensity} brings an additional novelty, by reinventing the notion of configuration size and defining it according to the lengths of some edges only, leaving room for pigeonholing and achieving the unit volume by fixing the length of the remaining edge. 
In the proof of Theorem \ref{thm:spacetimedensity} we need to take care of several edges simultaneously, making it also similar to \cite[\S3]{K20:anisotrop}.
Here we are making use of the fact that the underlying configuration is no longer rigid but flexible, which gives us additional useful ``degrees of freedom'' to detect the configuration analytically.
 
Section \ref{sec:remarksanalysts} contains a short philosophical discussion on why area $1$ triangles allow a density theorem (in sufficiently large dimensions), while area $1$ rectangles do not. 
We draw analogies with singular integral forms from multilinear harmonic analysis.
Section \ref{sec:summary}, the closing one, contains a simplified table summarizing both known and open results for several basic types of point configurations. 

The author hopes that the present paper will revive the topic of studying existence theorems for fixed-volume point configurations, which seems to have been explored in only a few papers like \cite{Mau02,DJ10,AR12,AC14,Mor15,KP25,Koi25} since the early work of Graham \cite{Gra80}.


\section{Rectangles: proof of Theorem \ref{thm:colorR2}}
\label{sec:R2}
We are about to give a coloring of $\mathbb{R}^2$ that uses $25$ colors and has a slightly stronger property: no color-class will contain the vertices of a parallelogram such that the product of lengths of its two consecutive sides equals $1$. For rectangles this clearly specializes to the property of their area being equal to $1$. 

\begin{figure}
\includegraphics[width=0.6\linewidth]{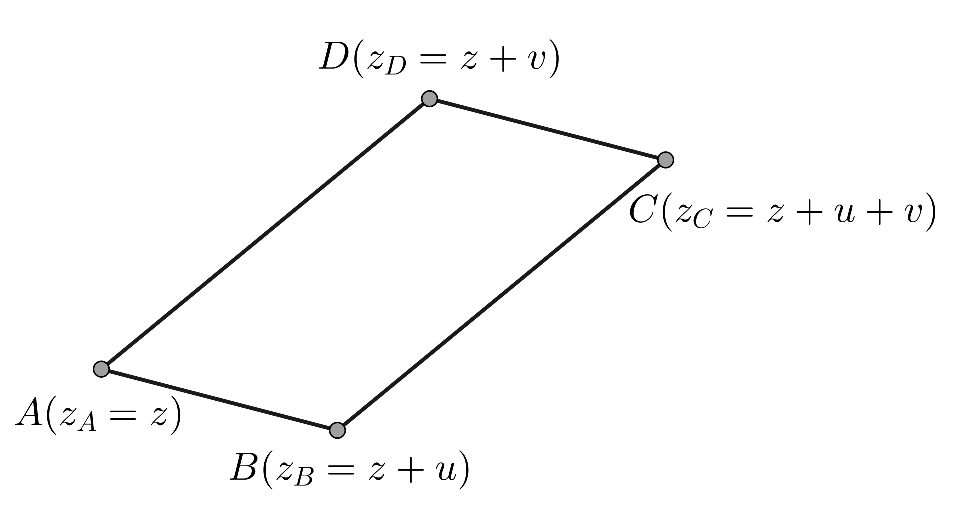}
\caption{Coordinatization of a parallelogram.}
\label{fig:colfig1}
\end{figure}

\begin{proof}[Proof of Theorem \ref{thm:colorR2}]
Let us place a (possibly degenerate) parallelogram $\mathcal{P}=ABCD$ in the complex plane, so that its vertices $A,B,C,D$ are respectively coordinatized by the complex numbers $z_A,z_B,z_C,z_D$ as in Figure \ref{fig:colfig1}. Consider a complex quantity $\mathscr{I}(\mathcal{P})$ defined as
\begin{equation}\label{eq:defofI}
\mathscr{I}(\mathcal{P}) := z_A^2 - z_B^2 + z_C^2 - z_D^2.
\end{equation}
In this definition we specify the vertex $A$ to be the one with the smallest coordinate $z_A$ in the lexicographic ordering of $\mathbb{C}\equiv\R^2$. Otherwise, $\mathscr{I}(\mathcal{P})$ would have only been determined up to multiplication by $\pm1$.

There exist $u,v,z\in\mathbb{C}$ such that the vertices of $\mathcal{P}$ have complex coordinates
\[ z_A=z, \quad z_B=z+u, \quad z_C=z+u+v, \quad z_D=z+v; \]
see Figure \ref{fig:colfig1} again.
The quantity $\mathscr{I}(\mathcal{P})$ now simplifies as 
\[ \mathscr{I}(\mathcal{P}) = z^2 - (z+u)^2 + (z+u+v)^2 - (z+v)^2 = 2uv. \]
Consecutive side-lengths of $\mathcal{P}$ are $|u|$ and $|v|$, so we have
\[ |\mathscr{I}(\mathcal{P})| = 2 \]
whenever their product equals $1$. As we have already mentioned, this holds in particular if $\mathcal{P}$ is a rectangle of area $1$.
Therefore, it remains to find a coloring of $\mathbb{C}$ such that, if all vertices of $\mathcal{P}$ are assigned the same color, then the complex number $\mathscr{I}(\mathcal{P})$ does not lie on the circle
\begin{equation}\label{eq:circle}
\{w\in\mathbb{C} : |w|=2\}.
\end{equation}

\begin{figure}
\includegraphics[width=0.6\linewidth]{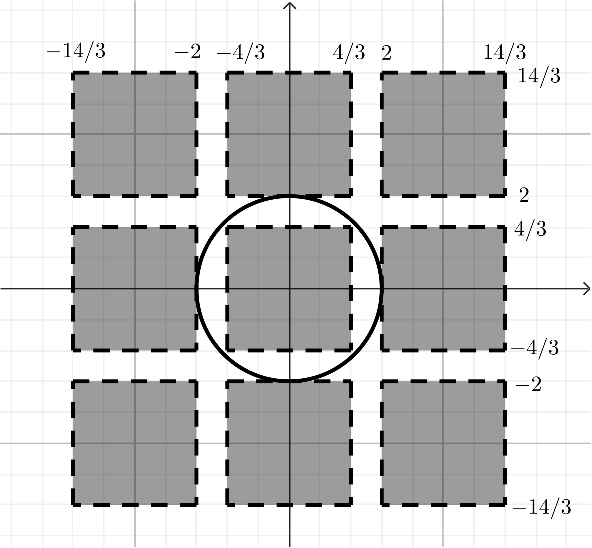}
\caption{The circle misses the squares.}
\label{fig:colfig2}
\end{figure}

For each pair $(j,k)\in\{0,1,2,3,4\}^2$ define a color-class $\mathscr{C}_{j,k}$ as
\[ \mathscr{C}_{j,k} := \bigg\{ z\in\mathbb{C} \,:\, z^2 \in \frac{10}{3} \bigg( \Z + \ii \Z + \frac{j + \ii k}{5} + \Big[0,\frac{1}{5}\Big) + \ii \Big[0,\frac{1}{5}\Big) \bigg) \bigg\}. \]
If the four vertices of $\mathcal{P}=ABCD$ belonged to the same color-class, then, by the definition \eqref{eq:defofI}, we would clearly have
\[ \mathscr{I}(\mathcal{P}) \in \frac{10}{3} \bigg( \Z + \ii \Z + \Big(-\frac{2}{5},\frac{2}{5}\Big) + \ii \Big(-\frac{2}{5},\frac{2}{5}\Big) \bigg). \]
The above set does not intersect the circle \eqref{eq:circle}; see Figure \ref{fig:colfig2}.
Indeed, the central square lies fully inside \eqref{eq:circle} because of $4\sqrt{2}/3<2$, while all remaining open squares clearly belong to its exterior. 
\end{proof}

We can say that the above solution relies on the invariant quantity $|\mathscr{I}(\mathcal{P})|$ assigned to rectangles of area $1$.
It does not generalize to all higher dimensions, since it uses multiplication of complex numbers, so we will resort to an ``almost invariant'' quantity in the next section. 

Let us illustrate the coloring constructed in the previous proof. Boundaries of the color-classes are given in the $(x,y)$-coordinate system by the equations
\[ x^2-y^2 = \frac{2a}{3} \quad\text{and}\quad xy = \frac{b}{3} \]
for arbitrary $a,b\in\Z$.
These are two mutually orthogonal families of hyperbolas (including degenerate ones for $a=0$ or $b=0$), depicted in Figure \ref{fig:colfig3}.

\begin{figure}
\includegraphics[width=0.5\linewidth]{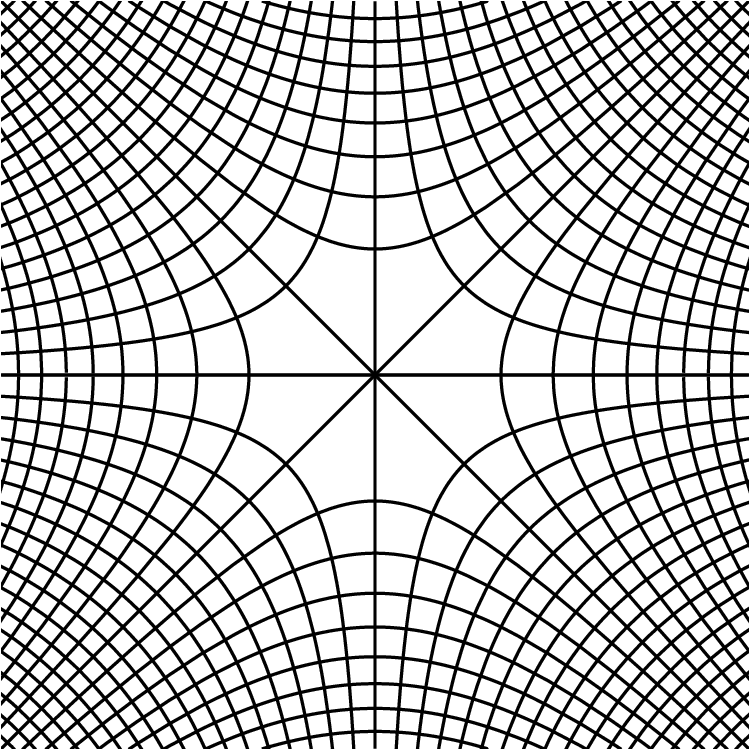}
\caption{Boundaries of color-classes $\mathscr{C}_{j,k}$.}
\label{fig:colfig3}
\end{figure}

The above proof of Theorem \ref{thm:colorR2} has recently been formalized in Lean \cite[\#189]{EP} with the help of Aristotle AI (by Harmonic) and Gemini 3 Pro (by Google), as part of joint efforts of the \emph{Erd\H{o}s problems} community \cite{EP}.


\section{Rectangular boxes: proof of Theorem \ref{thm:colorRn}}
\label{sec:Rn}
The idea is based on the following simple observation. Let us first take an $m$-dimensional rectangular box in $\mathbb{R}^n$ with edge-lengths $a_1,a_2,\ldots,a_m\in(0,\infty)$ and with sides parallel to the first $m$ coordinate axes of $\mathbb{R}^n$. Its vertices can then be enumerated by subsets $T$ of $\{1,2,\ldots,m\}$ as
\begin{equation}\label{eq:verticesofR0}
\Big( {q} + \sum_{j\in T} a_j \mathbbm{e}_j : T\subseteq\{1,2,\ldots,m\} \Big),
\end{equation}
where ${q}=(q_1,\ldots,q_n)\in\mathbb{R}^n$ is some point, namely, one of its vertices. Let us compute the alternating sum of the product of the first $m$ coordinates of the box vertices,
\[ \sum_{T\subseteq\{1,2,\ldots,m\}} (-1)^{m-|T|} \Big(\prod_{j\in T^c} q_j\Big) \Big(\prod_{j\in T} (q_j+a_j)\Big) = \prod_{j=1}^{m} (-q_j + q_j+a_j) = a_1 a_2 \cdots a_m, \]
and notice that we have obtained precisely the box volume.

The crucial part of the proof will be to show that the same quantity is almost invariant for slightly rotated rectangular boxes, where the ``slightness'' can be prescribed uniformly over all box eccentricities.
Note that this property is not quite obvious and it is sensitive to the pattern shape, as the uniformity will fail already for parallelograms in $\mathbb{R}^2$.
After we construct a coloring that prohibits those slightly tilted boxes, in the last step we will rotate it by finitely many matrices $U_1,\ldots,U_L\in\textup{SO}(n)$, thanks to compactness of the rotation group. 

Before the proof we need a simple identity.

\begin{lemma}\label{lm:identity}
The identity 
\begin{equation}\label{eq:identity}
\sum_{T\subseteq\{1,2,\ldots,m\}} (-1)^{m-|T|} \prod_{k=1}^{m} \Big(p_k + \sum_{j\in T} v_{j,k}\Big) = \sum_{\sigma\in \mathbb{S}_m} v_{1,\sigma(1)} v_{2,\sigma(2)} \cdots v_{m,\sigma(m)} 
\end{equation}
holds for complex numbers $(p_k)_{1\leq k\leq m}$ and $(v_{j,k})_{1\leq j,k\leq m}$.
\end{lemma}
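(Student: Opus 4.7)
The plan is to expand the product on the left-hand side, exchange the order of summation, and then recognize a standard inclusion--exclusion alternating sum that isolates the permanent-like expression on the right. For each fixed $T \subseteq \{1,2,\ldots,n\}$, I would introduce the shorthand $c_k(0) := p_k$ and $c_k(j) := v_{j,k}$ for $j \in \{1,2,\ldots,n\}$, so that distributivity yields
\[ \prod_{k=1}^{n} \Big(p_k + \sum_{j \in T} v_{j,k}\Big) = \sum_{f \colon \{1,\ldots,n\} \to \{0\} \cup T} \prod_{k=1}^{n} c_k(f(k)). \]
Substituting this into the left-hand side of \eqref{eq:identity} and swapping the order of summation, I would first sum over all functions $f \colon \{1,\ldots,n\} \to \{0,1,\ldots,n\}$ and then over the subsets $T$ satisfying $T \supseteq R(f)$, where $R(f) := f(\{1,\ldots,n\}) \setminus \{0\}$.

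The inner alternating sum is then
\[ \sum_{T \supseteq R(f)} (-1)^{n-|T|} = (-1)^{n-|R(f)|} \sum_{S \subseteq \{1,\ldots,n\}\setminus R(f)} (-1)^{|S|}, \]
and by the binomial theorem it equals $0$ unless $R(f) = \{1,\ldots,n\}$, in which case it equals $1$. Since $|R(f)| \leq n$ and $f$ has only $n$ domain elements, the condition $|R(f)| = n$ forces $f$ to take no value $0$ and to be injective, hence to be a permutation $\sigma \in S_n$.

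Only these permutation terms survive, contributing $\sum_{\sigma \in S_n} \prod_{k=1}^{n} v_{\sigma(k), k}$, which after the reindexing $\tau := \sigma^{-1}$ becomes $\sum_{\tau \in S_n} \prod_{j=1}^{n} v_{j, \tau(j)}$, precisely the right-hand side of \eqref{eq:identity}. The main obstacle is essentially bookkeeping: organizing the expansion so that the surviving terms are visibly indexed by permutations, and verifying that every occurrence of the parameters $p_k$ drops out under the inclusion--exclusion cancellation.
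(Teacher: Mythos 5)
Your proof is correct, and it takes a genuinely different route from the paper's. The paper argues by induction on $n$: it differentiates the left-hand side with respect to each $v_{n,l}$, applies the inductive hypothesis (with $p_k$ replaced by $p_k+v_{n,k}$) to identify the partial derivatives of both sides, concludes that $\textup{LHS}-\textup{RHS}$ is independent of the variables $v_{n,1},\dots,v_{n,n}$, and then evaluates by setting $v_{n,l}=0$, where an even/odd pairing of subsets $T$ makes the expression vanish. Your argument is direct and induction-free: you fully expand each product, interchange the order of summation over subsets $T$ and choice functions $f$, and observe that the inner alternating sum $\sum_{T\supseteq R(f)}(-1)^{n-|T|}$ vanishes unless the nonzero range $R(f)$ is all of $\{1,\ldots,n\}$, which forces $f$ to be a permutation; the reindexing $\tau=\sigma^{-1}$ then gives the permanent in the stated form. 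This is the standard finite-difference or inclusion--exclusion computation, and it is arguably the more transparent proof: it makes the disappearance of the $p_k$'s a structural fact (any surviving $f$ must avoid the value $0$) rather than a consequence of a cancellation checked at the end. The paper's inductive argument buys a shorter write-up at the cost of introducing partial derivatives and keeping track of the multilinearity of the polynomial; yours is purely combinatorial and handles all $p_k$ at once. Both are complete.
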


Here $\mathbb{S}_m$ denotes the group of permutations on the set $\{1,2,\ldots,m\}$.
It is peculiar to notice that the right hand side ($\textup{RHS}$) of \eqref{eq:identity} is the permanent of the matrix $(v_{j,k})_{j,k}$.
In fact, identity \eqref{eq:identity} is a generalization of Ryser's formula for the permanent \cite[Corollary 4.2]{Rys63}, which is obtained by specializing $p_k=0$ for every $k$. We will give a different self-contained proof.

\begin{proof}[Proof of Lemma \ref{lm:identity}]
We will prove the identity by induction on $m$.
The basis case $m=1$ is trivial as then the identity reads
\[ - p_1 + (p_1 + v_{1,1}) = v_{1,1}. \]
Take a positive integer $m\geq2$.
Observe that the left hand side ($\textup{LHS}$) of \eqref{eq:identity} is a homogeneous polynomial of degree $m$ in $m^2+m$ variables $p_k$ and $v_{j,k}$, but the degree of each of the variables in it is at most $1$.
Differentiating it with respect to the variable $v_{m,l}$ for some $l\in\{1,\ldots,m\}$ we obtain
\begin{align*}
\frac{\partial}{\partial v_{m,l}} \textup{LHS}
& = \sum_{T\subseteq\{1,\ldots,m-1\}} (-1)^{m-1-|T|} \prod_{\substack{1\leq k\leq m\\k\neq l}} \Big(p_k + v_{m,k} + \sum_{j\in T} v_{j,k}\Big) \\
& = \sum_{\substack{\sigma\in \mathbb{S}_m\\\sigma(m)=l}} v_{1,\sigma(1)} v_{2,\sigma(2)} \cdots v_{m-1,\sigma(m-1)},
\end{align*}
where in the last equality we applied the induction hypothesis to a smaller collection of numbers
\[ (p_k + v_{m,k})_{1\leq k\leq m, k\neq l},\quad (v_{j,k})_{1\leq j\leq m-1,1\leq k\leq m,k\neq l} \]
and renamed the indices.
Multiplying by $v_{m,l}$ and summing in $l$ gives
\[ \sum_{l=1}^{m} v_{m,l} \frac{\partial}{\partial v_{m,l}} \textup{LHS} = \textup{RHS}. \]
Observe that the polynomial 
\[ \textup{LHS} - \sum_{l=1}^{m} v_{m,l} \frac{\partial}{\partial v_{m,l}} \textup{LHS} \]
does not depend on the variables $v_{m,1},\ldots,v_{m,m}$, because they appear with power at most $1$ and thus certainly cancel when evaluating the above difference.
Therefore, the quantity $\textup{LHS} - \textup{RHS}$ also does not depend on $v_{m,1},\ldots,v_{m,m}$, so it can be evaluated simply by plugging $v_{m,1}=\cdots=v_{m,m}=0$:
\[ \textup{LHS} - \textup{RHS} = \sum_{T\subseteq\{1,\ldots,m\}} (-1)^{m-|T|} \prod_{k=1}^{m} \Big(p_k + \sum_{\substack{1\leq j\leq m-1\\j\in T}} v_{j,k}\Big) = 0. \]
The last sum equals $0$ because its terms can be paired to cancel each other: adding/subtracting the element $m$ to/from the set $T$ gives exactly the same term, only with the opposite signs.
We have obtained $\textup{LHS} = \textup{RHS}$.
\end{proof}

Now we turn to the proof of the announced result.

\begin{proof}[Proof of Theorem \ref{thm:colorRn}]
It is sufficient to fix integers $m\leq n$, since later we can consider a joint refinement (i.e., mutual class-wise intersections) of the constructed colorings of $\R^n$ obtained for $m=1,2,\ldots,n$.
To each $m$-dimensional rectangular box $\mathcal{R}$ in $\R^n$ we assign a real-valued quantity $\mathscr{J}(\mathcal{R})$ defined as
\begin{equation}\label{eq:defofJ}
\mathscr{J}(\mathcal{R}) := \sum_{{x}=(x_1,\ldots,x_n) \text{ is a vertex of }\mathcal{R}} (-1)^{m-\text{par}({x})} \,x_1\cdots x_m. 
\end{equation}
Note that only the first $m$ out of $n$ coordinates of a point ${x}$ appear in the product $x_1\cdots x_m$.
Also, here $\text{par}({x})$ denotes the \emph{parity} of a vertex ${x}$, which is computed as follows.
Choose the base vertex of $\mathcal{R}$ to be the one with the smallest coordinate representation in the lexicographic ordering of $\R^n$. The parity of any vertex of $\mathcal{R}$ is defined to be the parity of its distance from the base vertex in the $1$-skeleton graph of $\mathcal{R}$. This number is either $0$ or $1$ and it changes as we move from a vertex to its neighbor along a $1$-edge of $\mathcal{R}$.
Clearly, the expression \eqref{eq:defofJ} has $2^m$ terms, half of them get the $+$ sign and half of them come with the $-$ sign; see the illustration of these signs in Figure \ref{fig:colfig4}.

\begin{figure}
\includegraphics[width=0.65\linewidth]{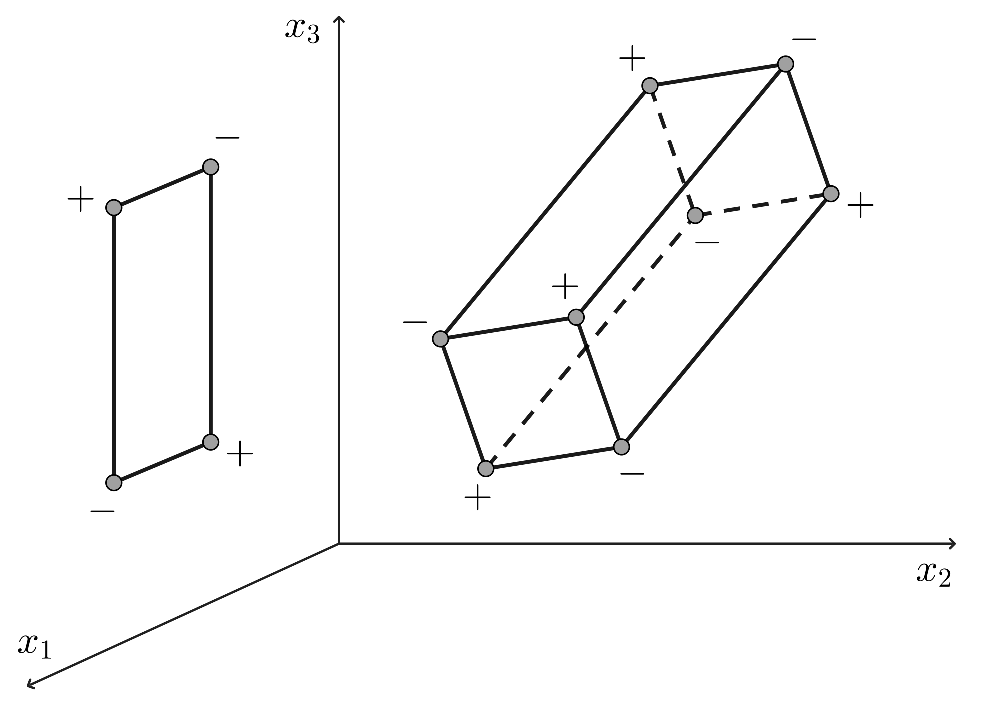}
\caption{Signs attached to rectangle and box vertices (cases $m=2$ and $m=3$).}
\label{fig:colfig4}
\end{figure}

Every $m$-dimensional rectangular box $\mathcal{R}\subseteq\mathbb{R}^n$ can be obtained by rotating an axes-aligned box $\mathcal{R}_0$ about the origin, 
\begin{equation}\label{eq:recimage}
\mathcal{R} = U \mathcal{R}_0,
\end{equation}
where the vertices of $\mathcal{R}_0$ are given by \eqref{eq:verticesofR0}, while the rotation $U$ is given by a special orthogonal transformation, $U\in\textup{SO}(n)$.
The vertices of $\mathcal{R}$ can then be written as
\[ \Big( {p} + \sum_{j\in T} {v}_j : T\subseteq\{1,2,\ldots,m\} \Big), \]
where ${p}=U{q}$ and ${v}_j=a_j U\mathbbm{e}_j$ for each index $1\leq j\leq m$. 
Also, write coordinatewise:
\begin{equation}\label{eq:pvcoordinates}
{p} = (p_k)_{1\leq k\leq n}, \quad {v}_j = (v_{j,k})_{1\leq k\leq n}
\end{equation}
and note that Lemma \ref{lm:identity} gives
\begin{equation}\label{eq:repofJ}
\mathscr{J}(\mathcal{R}) = \pm \sum_{\sigma\in \mathbb{S}_m} v_{1,\sigma(1)} v_{2,\sigma(2)} \cdots v_{m,\sigma(m)}.
\end{equation}

First, suppose that the rotation $U$ satisfies $\|U-I\|_{\textup{op}}<\varepsilon$, where $\|T\|_{\textup{op}}$ denotes the operator norm of a matrix $T$ and
\[ \varepsilon := \frac{1}{2^{m+2}m!}. \]
In particular,
\[ \Big| \frac{1}{a_j}{v}_j - \mathbbm{e}_j \Big| = |(U-I)\mathbbm{e}_j| < \varepsilon, \]
so that
\[ | v_{j,j} - a_j | < \varepsilon a_j \]
for $1\leq j\leq m$ and 
\[ | v_{j,k} | < \varepsilon a_j \]
for $1\leq j\leq m$, $1\leq k\leq n$, $j\neq k$.
Consequently,
\begin{align}
& \Big| \sum_{\sigma\in \mathbb{S}_m} v_{1,\sigma(1)} v_{2,\sigma(2)} \cdots v_{m,\sigma(m)} - a_1 a_2 \cdots a_m \Big| \nonumber \\
& \leq | v_{1,1} v_{2,2} \cdots v_{m,m} - a_1 a_2 \cdots a_m |
+ \sum_{\substack{\sigma\in \mathbb{S}_m\\ \sigma\neq\textup{id}}} | v_{1,\sigma(1)} v_{2,\sigma(2)} \cdots v_{m,\sigma(m)} | \nonumber \\
& < m \varepsilon (1+\varepsilon)^{m-1} a_1 a_2 \cdots a_m + (m!-1) \varepsilon (1+\varepsilon)^{m-1} a_1 a_2 \cdots a_m \nonumber \\
& \leq 2^m m! \varepsilon a_1 a_2 \cdots a_m \leq \frac{1}{4} a_1 a_2 \cdots a_m. \label{repofJaux}
\end{align}
Let us now additionally assume that $\mathcal{R}$ has volume $1$. Then \eqref{eq:repofJ} combined with \eqref{repofJaux} and $a_1 a_2 \cdots a_m=1$ gives
\begin{equation}\label{eq:Jisinint1}
\mathscr{J}(\mathcal{R}) \in \Big(-\frac{5}{4},-\frac{3}{4}\Big) \cup \Big(\frac{3}{4},\frac{5}{4}\Big). 
\end{equation}
Partition $\mathbb{R}^n$ into the sets
\[ \mathscr{S}_l := \bigg\{ (x_1,x_2,\ldots,x_n) \in \R^n : x_1 x_2 \cdots x_m \in \frac{3}{2}\bigg(\mathbb{Z} + \Big[\frac{l}{3\cdot2^{m}},\frac{l+1}{3\cdot2^{m}}\Big)\bigg) \bigg\} \]
for $0\leq l\leq 3\cdot2^{m}-1$.
We claim that the vertices of $\mathcal{R}$ cannot all belong to the same set $\mathscr{S}_l$. Namely, if they did, then the definition of $\mathscr{J}(\mathcal{R})$ would give
\begin{equation}\label{eq:Jisinint2}
\mathscr{J}(\mathcal{R}) \in \frac{3}{2}\mathbb{Z} + \Big(-\frac{1}{4},\frac{1}{4}\Big)
= \cdots \cup \Big(-\frac{7}{4},-\frac{5}{4}\Big) \cup \Big(-\frac{1}{4},\frac{1}{4}\Big) \cup \Big(\frac{5}{4},\frac{7}{4}\Big) \cup \cdots. 
\end{equation}
However, \eqref{eq:Jisinint1} and \eqref{eq:Jisinint2} together lead to a contradiction as the sets on their right hand sides are disjoint.

Finally, we handle completely arbitrary rectangular boxes $\mathcal{R}$ of unit volume.
Consider an open neighborhood $\mathcal{O}$ of the identity $I$ in the rotation group $\textup{SO}(n)$ defined as
\[ \mathcal{O} := \{ V\in\textup{SO}(n) \,:\, \|V-I\|_{\textup{op}}<\varepsilon \}. \]
Then the family $\{U\mathcal{O} : U\in\textup{SO}(n)\}$ constitutes an open cover of the compact space $\textup{SO}(n)$, so it can be reduced to a finite subcover $\{U_1\mathcal{O}, U_2\mathcal{O}, \ldots, U_L\mathcal{O}\}$.
The color-classes of the desired coloring of $\R^n$ can now be defined as
\[ \mathscr{C}_{l_1,l_2,\ldots,l_L} := (U_1 \mathscr{S}_{l_1}) \cap (U_2 \mathscr{S}_{l_2}) \cap \cdots \cap (U_L \mathscr{S}_{l_L}), \]
where $(l_1,l_2,\ldots,l_L)$ run over all $L$-tuples of elements from $\{0,1,2,\ldots,3\cdot 2^m-1\}$.
In words, this is a joint refinement of $L$ mutually rotated colorings.
Suppose that the vertices of $\mathcal{R}$ belong to the same color-class $\mathscr{C}_{l_1,l_2,\ldots,l_L}$. 
Let $U\in\textup{SO}(n)$ be as in \eqref{eq:recimage}, but without any assumption on the norm of $U-I$. 
Take an index $i\in\{1,\ldots,L\}$ such that $U\in U_i\mathcal{O}$. Then the box 
\[ \mathcal{R}' := U_i^{-1} \mathcal{R} \]
satisfies
\[ \mathcal{R}' = U_i^{-1} U \mathcal{R}_0, \quad \|U_i^{-1} U-I\|_{\textup{op}}<\varepsilon \]
and all of its vertices are in the set
\[ U_i^{-1} \mathscr{C}_{l_1,l_2,\ldots,l_L} \subseteq U_i^{-1} U_i \mathscr{S}_{l_i} = \mathscr{S}_{l_i}. \]
This contradicts the construction from the previous part of the proof.
\end{proof}

The number of colors needed in the above proof grows superexponentially in $n$, as remarked in the introduction. This follows from the known growth of the minimal cardinality $L(n)$ of a cover of $\textup{SO}(n)$ by translates of the set $\mathcal{O}$ (or of any other fixed small neighborhood of $I$).
Namely, when the Haar measure $\mu_{\textup{SO}(n)}$ of the orthogonal group is normalized to have total mass $1$, then the large deviation principle for the spectral measure of the orthogonal group estimates $\mu_{\textup{SO}(n)}(\mathcal{O}) = O((\varepsilon/2)^{n^2/2})$ as $n\to\infty$ for a fixed $\varepsilon\in(0,1)$; we omit the lengthy details.
Therefore, it remains to apply the ``volume bound''
\[ L(n) \geq \frac{\mu_{\textup{SO}(n)}(\textup{SO}(n))}{\mu_{\textup{SO}(n)}(\mathcal{O})} \]
to conclude the superexponential growth of $L(n)$ in $n$.


\section{Parallelograms: proof of Theorem \ref{thm:negparallel}}
This will be an easy modification of the proof of Theorem \ref{thm:colorR2}.
Thus, we work in the complex plane and keep the notation from Section \ref{sec:R2}.

\begin{proof}[Proof of Theorem \ref{thm:negparallel}]
Fix $\varepsilon\in(0,\pi/2)$, take a positive integer $N$ greater than $6/\sin\varepsilon$, and this time define the color-classes as
\[ \mathscr{C}'_{j,k} := \bigg\{ z\in\mathbb{C} \,:\, z^2 \in \frac{4}{\sin\varepsilon} \bigg( \Z + \ii \Z + \frac{j + \ii k}{N} + \Big[0,\frac{1}{N}\Big) + \ii \Big[0,\frac{1}{N}\Big) \bigg) \bigg\} \]
for integers $0\leq j,k\leq N-1$.
For every monochromatic parallelogram $\mathcal{P}$ coordinatized as in Section \ref{sec:R2} (see Figure \ref{fig:colfig1}) we conclude
\[ u v = \frac{1}{2}\mathscr{I}(\mathcal{P}) \in \frac{2}{\sin\varepsilon} \bigg( \Z + \ii \Z + \Big(-\frac{2}{N},\frac{2}{N}\Big) + \ii \Big(-\frac{2}{N},\frac{2}{N}\Big) \bigg) \]
and, in particular,
\[ |u v| \not\in \Big[1,\frac{1}{\sin\varepsilon}\Big]. \]
If all angles of $\mathcal{P}$ are at least $\varepsilon$, then its area is comparable to the product of its side-lengths. More precisely,
\[ |uv| \sin\varepsilon \leq |\mathcal{P}| \leq |uv| \]
and we see that $\mathop{\textup{area}}(\mathcal{P})$ cannot equal $1$.

Next, we can partition $\mathbb{C}$ into
\[ \mathscr{C}''_{j} := \bigg\{ z\in\mathbb{C} \,:\, \mathop{\textup{Im}}(z^2) \in 4 \bigg( \Z + \frac{j}{5} + \Big[0,\frac{1}{5}\Big) \bigg) \bigg\} \]
for $0\leq j\leq 4$.
For every parallelogram $\mathcal{P}$ that is monochromatic with respect to this coloring we certainly have
\begin{align*}
\mathop{\textup{Im}}(uv) & = \frac{1}{2} \big(\mathop{\textup{Im}}(z^2) - \mathop{\textup{Im}}((z+u)^2) + \mathop{\textup{Im}}((z+u+v)^2) - \mathop{\textup{Im}}((z+v)^2) \big) \\
& \in 2\Z + \Big(-\frac{4}{5},\frac{4}{5}\Big).
\end{align*}
If one side of $\mathcal{P}$ is parallel to the real axis, then $u\in\R$ or $v\in\R$, so the area of $\mathcal{P}$ is
\[ |\mathcal{P}| = |\mathop{\textup{Im}}(uv)| \]
and we again conclude that this area cannot be equal to $1$. 
This is not surprising because we have in fact been working with the quantity $\mathscr{J}$ from \eqref{eq:defofJ} in Section \ref{sec:Rn} and we have just shown that it is invariant, but only for these special parallelograms.
Rotate the coloring $\mathscr{C}''_0,\ldots,\mathscr{C}''_4$ in such a way that it prevents monochromatic area $1$ parallelograms from having one side parallel to one of the given lines $\ell_i$; repeat this for $i=1,2,\ldots,m$.

It remains to take a joint refinement of all colorings of $\mathbb{C}$ constructed in this proof.
\end{proof}


\section{Hypercube graphs: proof of Theorem \ref{thm:hypercubes}}
Let us conveniently work in the complex plane again.
The $1$-skeleton of \eqref{eq:nbox} is then embedded in $\mathbb{C}$ as in Figure \ref{fig:colfig5} given in the introductory section.

\begin{proof}[Proof of Theorem \ref{thm:hypercubes}]
By taking $p_k=z$ and $v_{j,k}=u_j$ for each index $k$ in the identity \eqref{eq:identity} in Lemma \ref{lm:identity} from Section \ref{sec:Rn}, we obtain a simpler (and well-known) polynomial identity
\begin{equation}\label{eq:complplyiden}
\sum_{T\subseteq\{1,2,\ldots,n\}} (-1)^{n-|T|} \Big(z + \sum_{j\in T} u_j\Big)^n = n! \,u_1 u_2 \cdots u_n 
\end{equation}
for $z,u_1,u_2,\ldots,u_n\in\mathbb{C}$.
Now we partition $\mathbb{C}$ into color-classes defined as
\[ \mathscr{C}_{j,k} := \bigg\{ z\in\mathbb{C} \,:\, z^n \in 2 n! \bigg( \Z + \ii \Z + \frac{j + \ii k}{2^{n+1}} + \Big[0,\frac{1}{2^{n+1}}\Big) + \ii \Big[0,\frac{1}{2^{n+1}}\Big) \bigg) \bigg\} \]
for $0\leq j,k\leq 2^{n+1}-1$.
If all $2^n$ points \eqref{eq:1skeleton} belong to the same class $\mathscr{C}_{j,k}$, then \eqref{eq:complplyiden} implies
\[ u_1 u_2 \cdots u_n \in 2(\Z + \ii \Z) + \Big(-\frac{1}{2},\frac{1}{2}\Big) + \ii\Big(-\frac{1}{2},\frac{1}{2}\Big). \]
From this it is clear that $a_1\cdots a_n=|u_1\cdots u_n|$ cannot be equal to $1$.
\end{proof}


\section{Parallelotopes: proof of Theorem \ref{thm:parallel}}
\label{sec:topes}
\begin{proof}[Proof of Theorem \ref{thm:parallel}]
By induction on a positive integer $n$ we will prove that every $n$-parallelotope with vertices in the set
\[ A_{n,\theta} := \{ (x_1,x_2,\ldots,x_n)\in(0,\infty)^n : x_1 x_2\cdots x_n \leq \theta \}, \]
for some $\theta>0$, has $n$-volume strictly smaller than $n!\theta$.
See Figure \ref{fig:colfig6} for an illustration of the case $n=2$.
Afterwards, it will remain to choose $\theta=\varepsilon/n!$ and observe that $A_{n,\theta}$ has volume
\[ |A_{n,\theta}| = \int_{(0,\infty)^{n-1}} \frac{\theta \,\textup{d}x_1 \cdots \textup{d}x_{n-1}}{x_1 \cdots x_{n-1}} = \infty \]
for every $n\geq2$.

\begin{figure}
\includegraphics[width=0.85\linewidth]{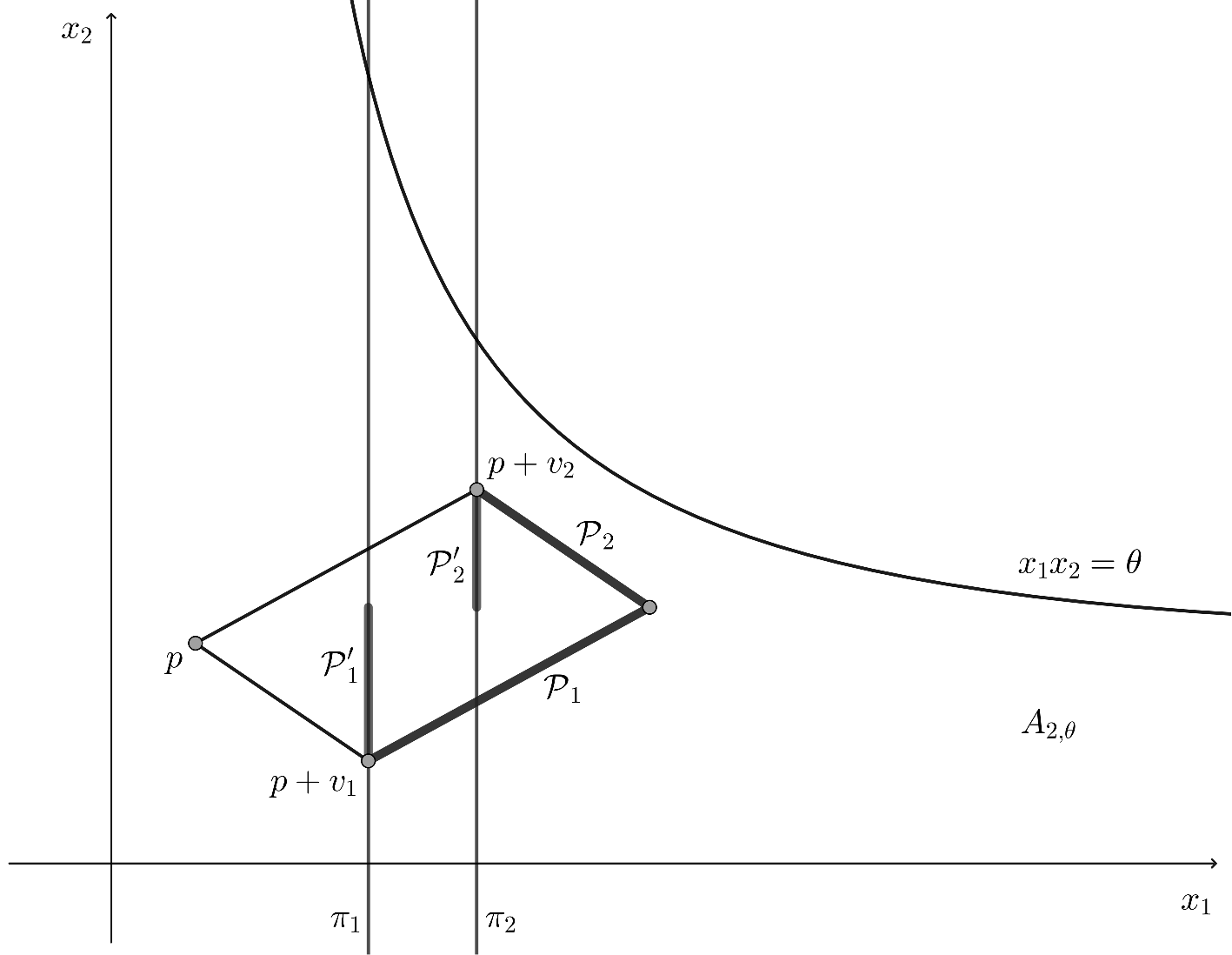}
\caption{A parallelogram with vertices in $A_{2,\theta}$.}
\label{fig:colfig6}
\end{figure}

The induction basis $n=1$ is trivial since the observed $1$-parallelotopes are just segments in the interval $(0,\theta]$, so their length is clearly less than $\theta$.
Now take $n\geq2$ and suppose that the claim holds for the sets $A_{n-1,\vartheta}$ for every $\vartheta>0$.
Let $\theta>0$ and let $\mathcal{P}$ be an $n$-parallelotope with vertices in $A_{n,\theta}$. 
Suppose that ${p}$ is the vertex of $\mathcal{P}$ with the smallest first coordinate (i.e., coordinate $x_1$). Also suppose that $\mathcal{P}$ is spanned from ${p}$ by vectors ${v}_1,\ldots,{v}_n$  
and introduce their coordinates just as in formula \eqref{eq:pvcoordinates}.
We have 
\begin{equation}\label{eq:positivecoordinates}
v_{j,1}\geq0 \quad \text{for} \quad 1\leq j\leq n
\end{equation}
by the choice of the vertex ${p}$.
Finally, for each $1\leq i\leq n$, let $\pi_i$ denote the hyperplane orthogonal to the first coordinate axis and passing through the point ${p}+{v}_i$ (i.e., with Cartesian equation $x_1=p_1+v_{i,1}$),
let $\mathcal{P}_i$ denote the $(n-1)$-parallelotope spanned from the point ${p}+{v}_i$ by the vectors ${v}_1,\ldots,{v}_{i-1},{v}_{i+1},\ldots,{v}_n$, and let $\mathcal{P}'_i$ be the orthogonal projection of $\mathcal{P}_i$ onto $\pi_i$; see Figure \ref{fig:colfig6}.
From \eqref{eq:positivecoordinates} we see that all vertices of $\mathcal{P}_i$ have $x_1$-coordinates at least $p_1+v_{i,1}$, so the vertices of $\mathcal{P}'_i$ lie in the set
\[ A_{n,\theta} \cap \pi_i = \{p_1+v_{i,1}\} \times A_{n-1,\theta/(p_1+v_{i,1})}. \]
By the induction hypothesis applied inside $\pi_i\cong\R^{n-1}$, its volume satisfies
\begin{equation}\label{eq:boundedprime}
|\mathcal{P}'_i| < \frac{(n-1)! \theta}{p_1+v_{i,1}}.
\end{equation}
Using the Laplace expansion of a determinant (with respect to the first column), \eqref{eq:positivecoordinates}, and \eqref{eq:boundedprime} we can finally estimate the volume of $\mathcal{P}$ as
\begin{align*} 
|\mathcal{P}| & = |\det(v_{j,k})_{1\leq j,k\leq n}| 
\leq \sum_{i=1}^{n} v_{i,1} \Big|\det(v_{j,k})_{\substack{1\leq j\leq n,\,j\neq i\\ 2\leq k\leq n}}\Big| \\
& = \sum_{i=1}^{n} v_{i,1} |\mathcal{P}'_i|
\leq \sum_{i=1}^{n} v_{i,1}\frac{(n-1)! \theta}{p_1+v_{i,1}} < n! \theta,
\end{align*}
which concludes the induction step and completes the proof.
\end{proof}


\section{Notation and preliminaries from harmonic analysis}
\label{sec:harm}
From this section on, we introduce the notation
\[ \mathcal{A} \lesssim_P \mathcal{B} \quad\text{and}\quad \mathcal{B} \gtrsim_P \mathcal{A}, \]
which means that inequality $\mathcal{A} \leq C_P \mathcal{B}$ holds with some constant $C_P\in(0,\infty)$ depending only on a set of parameters $P$.
It is always understood that inequality constants depend on the (configuration or ambient) dimensions, so these are not explicitly listed in the set $P$. 
Next, 
\[ \mathcal{A} \sim_P \mathcal{B} \]
means that both $\mathcal{A} \lesssim_P \mathcal{B}$ and $\mathcal{B} \lesssim_P \mathcal{A}$ hold simultaneously.

When we have a function $f$ of two (or more) variables mapping $(x,y)\mapsto f(x,y)$, then we write $f(\cdot,y)$ for the single-variable function that maps $x\mapsto f(x,y)$ for a fixed $y$. Similarly, $f(x,\cdot)$ denotes a single-variable function that assigns $y\mapsto f(x,y)$.

The standard Gaussian function on $\R^d$ is
\[ \mathbbm{g}(x) := e^{-\pi|x|^2}, \]
its partial derivatives are
\[ \mathbbm{h}^{(l)}(x) := \frac{\partial}{\partial x_l} \mathbbm{g}(x) = -2\pi x_l e^{-\pi|x|^2} \]
for $1\leq l\leq d$, while its Laplacian will be denoted as
\[ \mathbbm{k}(x) := \Delta\mathbbm{g}(x) = (4\pi^2|x|^2 - 2d\pi)e^{-\pi|x|^2}. \]
Dimension $d$ is not visible from the notation $\mathbbm{g},\mathbbm{h}^{(l)},\mathbbm{k}$, but it will always be understood from context.

If the \emph{Fourier transformation} operator $f\mapsto\widehat{f}$ is normalized such that
\[ \widehat{f}(\xi) := \int_{\R^d} f(x) e^{-2\pi\ii x\cdot\xi} \,\textup{d}x \]
for every Schwartz function $f$ on $\R^d$, then the Fourier transforms of the above Gaussian functions are given as:
\begin{equation}\label{eq:Ftransforms} 
\widehat{\mathbbm{g}}(\xi) = e^{-\pi |\xi|^2},\quad 
\widehat{\mathbbm{h}^{(l)}}(\xi) = 2\pi\mathbbm{i}\xi_l \,e^{-\pi |\xi|^2},\quad 
\widehat{\mathbbm{k}}(\xi) = - 4 \pi^2 |\xi|^2 e^{-\pi |\xi|^2}.
\end{equation}
Also the Fourier transform of a finite positive (or even complex) Borel measure on $\R^d$ is defined as
\[ \widehat{\mu}(\xi) := \int_{\R^d} e^{-2\pi\ii x\cdot\xi} \,\textup{d}\mu(x). \]
The \emph{convolution} of $\mu$ and $f$ is a new function $\mu\ast f$ given by the formula
\[ (\mu\ast f)(x) := \int_{\R^d} f(x-y) \,\textup{d}\mu(y) \]
and the property 
\[ \widehat{\mu\ast f} = \widehat{\mu} \widehat{f} \]
is well-known to hold.
For every $\alpha\in(0,\infty)$ the scaled function $f_\alpha$ and the scaled measure $\mu_\alpha$ are defined by
\[ f_\alpha(x) := \alpha^{-d} f(\alpha^{-1}x),\quad \mu_\alpha(E) := \mu(\alpha^{-1}E) \]
and their Fourier transforms are computed as
\[ \widehat{f_\alpha}(\xi) = \widehat{f}(\alpha\xi),\quad \widehat{\mu_\alpha}(\xi) = \widehat{\mu}(\alpha\xi). \]
All of these basic properties of Fourier transforms are found in every textbook on real harmonic analysis, such as \cite{SW71}.

Anisotropic scalings are also sometimes useful. We define them for functions $f$ and measures $\mu$ on $\R^2$ for every pair $(\alpha,\beta)\in(0,\infty)^2$ as
\[ f_{\alpha,\beta}(x,y) := \alpha^{-1}\beta^{-1} f(\alpha^{-1}x, \beta^{-1}y), \quad  \mu_{\alpha,\beta}(E) := \mu(\{(\alpha^{-1}x,\beta^{-1}y):(x,y)\in E\}) \]
and then we have
\[ \widehat{f_{\alpha,\beta}}(\zeta,\eta) = \widehat{f}(\alpha\zeta,\beta\eta),\quad \widehat{\mu_{\alpha,\beta}}(\zeta,\eta) = \widehat{\mu}(\alpha\zeta,\beta\eta). \]
Note that all of the functions $f_\alpha,f_{\alpha,\beta}$ have the same $\textup{L}^1$-norm as $f$, while the measures $\mu_\alpha,\mu_{\alpha,\beta}$ have the same total mass as $\mu$. 

Immediate consequences of \eqref{eq:Ftransforms} are the Gaussian convolution identities:
\begin{align} 
\sum_{l=1}^{d} \mathbbm{h}^{(l)}_{\alpha} \ast \mathbbm{h}^{(l)}_{\beta} & = \frac{\alpha\beta}{\alpha^2+\beta^2} \mathbbm{k}_{\sqrt{\alpha^2+\beta^2}}, \label{eq:convidhh} \\
\mathbbm{k}_{\alpha} \ast \mathbbm{g}_{\beta} & = \frac{\alpha^2}{\alpha^2+\beta^2} \mathbbm{k}_{\sqrt{\alpha^2+\beta^2}} \label{eq:convidkg}
\end{align}
for $\alpha,\beta\in(0,\infty)$.
Similarly, in two dimensions,
\begin{align}
\sum_{l=1}^{2} \mathbbm{h}^{(l)}_{\alpha,\beta} \ast \mathbbm{h}^{(l)}_{\alpha,\beta} & = \frac{1}{2} \mathbbm{k}_{\alpha\sqrt{2},\beta\sqrt{2}} \label{eq:convidaa} \\
\mathbbm{k}_{\alpha,\beta} \ast \mathbbm{g}_{\alpha,\beta} & = \frac{1}{2} \mathbbm{k}_{\alpha\sqrt{2},\beta\sqrt{2}} \label{eq:convidab}
\end{align}
for $\alpha,\beta\in(0,\infty)$.
If $\mu$ is a compactly supported finite positive Borel measure on $\R^d$, then
\begin{equation}\label{eq:convlowerbd}
\mu\ast\mathbbm{g} \gtrsim_{\mu} \mathbbm{1}_{[-1,1]^{d}}.
\end{equation}
It is allowed that the implicit constant in this pointwise inequality depends on the choice of the measure $\mu$. 
In fact, it can be taken to be the minimum of $\mathbbm{g}$ on the set
\[ \{x-y \,:\, x\in[-1,1]^{d},\ y\in\mathop{\textup{supp}}\mu\} \]
multiplied by the total mass of $\mu$. 
In particular, this holds when $\mu=\delta_0$ is the Dirac measure at the origin, when it reads $\mathbbm{g} \gtrsim \mathbbm{1}_{[-1,1]^{d}}$, which is quite obvious anyway. Slightly more generally, we also have
\begin{equation}\label{eq:lowerbd2}
\mathbbm{g}_{\alpha} \gtrsim_P \mathbbm{1}_{[-1,1]^{d}}.
\end{equation}
when $\alpha\sim_P 1$.
When $\mu$ is a smooth truncation of the surface measure of a hyper-surface with non-vanishing Gaussian curvature, then its Fourier transform decays as 
\begin{equation}\label{eq:Fdecay}
\big|\widehat{\mu}(\xi)\big| \lesssim_\mu (1+|\xi|)^{-(d-1)/2};
\end{equation}
see \cite[\S{}VIII.3]{St93:book}.
In particular, this holds when $\mu=\sigma$ is the spherical measure supported on the standard unit sphere $\mathbb{S}^{d-1}\subseteq\R^d$, also called the circle measure when $d=2$.
Note that an easy consequence of \eqref{eq:Fdecay} and \eqref{eq:Ftransforms} is
\begin{equation}\label{eq:measurewithk}
\sup_{\xi\in\R^d} \big|\widehat{\mu}(\xi)\widehat{\mathbbm{k}}(t\xi)\big| 
\lesssim_\mu \sup_{\xi\in\R^d} |\xi|^{-1/2} \big|\widehat{\mathbbm{k}}(t\xi)\big|
\sim t^{1/2}
\end{equation}
when $d\geq2$.
The \emph{heat equation} in the present reparametrization of the time $t$ reads
\begin{equation}\label{eq:heatequ} 
\frac{\partial}{\partial t} \mathbbm{g}_t(x) = \frac{1}{2\pi t} \mathbbm{k}_t(x) 
\end{equation}
for $(t,x)\in (0,\infty)\times\mathbb{R}^d$. It can be verified by direct differentiation.

For a measurable function $f\colon\R^d\to\mathbb{C}$ and $p\in[1,\infty)$ we denote (the so-called \emph{Lebesgue seminorm})
\[ \|f\|_{\textup{L}^p(\R^d)} := \Big( \int_{\R^d} |f(x)|^p \,\textup{d}x \Big)^{1/p}. \]
The endpoint case $p=\infty$ is interpreted as the essential supremum of the image of $|f|$,
\[ \|f\|_{\textup{L}^\infty(\R^d)} := \mathop{\textup{ess\,sup}}_{x\in\R^d} |f(x)|. \]
The standard $\textup{L}^2$ inner product is 
\[ \langle f,g\rangle_{\textup{L}^2(\R^d)} := \int_{\R^d} f(x) \overline{g(x)} \,\textup{d}x . \]
\emph{Young's convolution inequality} reads
\begin{equation}\label{eq:Youngineq}
\|f\ast g\|_{\textup{L}^r(\R^d)} \leq \|f\|_{\textup{L}^p(\R^d)} \|g\|_{\textup{L}^q(\R^d)}
\end{equation}
when the exponents $p,q,r\in[1,\infty]$ satisfy $1+1/r=1/p+1/q$; see \cite[\S{}V.1]{SW71}


\section{Right simplices: proofs of Theorem \ref{thm:simpldensity} and Corollary \ref{cor:simpldensity}}
\label{sec:proofsimpl}
We only need to prove part (a) of Theorem \ref{thm:simpldensity}.
It is sufficient to work in dimension $n=m+1$, since, in higher dimensions $n$, Fubini's theorem guarantees that at least one of the sections parallel to a fixed $(m+1)$-dimensional coordinate plane has density at least that of $A$.
The parameter $m$ is fixed from now on. Take an $R\in(0,\infty)$ and a measurable set $A\subseteq[0,R]^{m+1}$ with density
\[ \delta: = |A|/R^{m+1} \in (0,1]. \] 
Furthermore, denote
\begin{equation}\label{eq:simplchoiceoftheta}
\theta = m^{-1} 2^{-m^2-m-1} \delta^{m+1}
\end{equation}
throughout the proof.
For a parameter $\lambda\in(0,\infty)$, interpreted as a certain ``scale'' that we consider one at a time, we define a \emph{configuration-counting form} as
\begin{align*}
\mathcal{N}_{\lambda}^{0}(A;R) := & \int_{\R^{m-1}} \int_{\R^{2}} \int_{\R^{m-1}} \int_{\R^{2}} \mathbbm{1}_A(x,y) \Big(\prod_{k=1}^{m-1}\mathbbm{1}_A(x+u_k \mathbbm{e}_k, y)\Big) \mathbbm{1}_A(x,y+v) \\
& \textup{d}\sigma_{m!|u_1\cdots u_{m-1}|^{-1}}(v) \,\lambda^{-m+1} \Big(\prod_{k=1}^{m-1}\mathbbm{1}_{[-\lambda,-\theta\lambda]\cup[\theta\lambda,\lambda]}(u_k)\Big) \,\textup{d}u \,R^{-m-1} \,\textup{d}y \,\textup{d}x ,
\end{align*}
where $\sigma$ is the circle measure in $\R^2$ normalized to have total mass $1$ on $\mathbb{S}^1$ and we write
\[ u=(u_1,\ldots,u_{m-1}) \in \mathbb{R}^{m-1} \]
for shortness.
Observe that $\mathcal{N}_{\lambda}^{0}(A;R)$ is a certain density of a subcollection of axes-aligned right-simplices inside $A$: it detects when all of the points
\begin{equation}\label{eq:wefoundsimplex}
(x,y),\ (x+u_1\mathbbm{e}_1,y),\ldots\ (x+u_{m-1}\mathbbm{e}_{m-1},y),\ (x,y+v) 
\end{equation}
belong to $A$ by performing normalized integration over all simplex vertices $(x,y)\in [0,R]^{m+1}$, over $m-1$ simplex edges parallel to the coordinate axes with lengths $\theta\lambda\leq|u_k|\leq\lambda$, and over the last perpendicular edge parallel to the remaining coordinate $2$-plane with its length precisely equal to
$|v|=m!|u_1\cdots u_{m-1}|^{-1}$,
so that the $m$-dimensional simplex volume equals $1$. Merely knowing that $\mathcal{N}_{\lambda}^{0}(A;R)>0$ guarantees existence of the desired point configuration inside $A$. 
The freedom to choose the parameter $\lambda$ will allow certain pigeonholing.
The idea is to show that $\mathcal{N}_{\lambda}^{0}$ is positive for at least one out of sufficiently many choices for $\lambda$, i.e., from a sufficiently large interval on the logarithmic scale, which is available as soon as $R$ is large enough (in terms of $\delta$).

The main idea, traced back to Bourgain \cite{B86:roth} and emphasized by Cook, Magyar, and Pramanik \cite{CMP15:roth}, is to consider a \emph{certain smoothed counting form}, which we here define for every $\varepsilon\in(0,1]$ by convolving the circle measure $\sigma$ with a scaled Gaussian $\mathbbm{g}_\varepsilon$:
\begin{align*}
\mathcal{N}_{\lambda}^{\varepsilon}(A;R) := & \int_{\R^{m-1}} \int_{\R^{2}} \int_{\R^{m-1}} \int_{\R^{2}} \mathbbm{1}_A(x,y) \Big(\prod_{k=1}^{m-1}\mathbbm{1}_A(x+u_k \mathbbm{e}_k, y)\Big) \mathbbm{1}_A(x,y+v) \\
& \times (\sigma \ast \mathbbm{g}_{\varepsilon})_{m!|u_1\cdots u_{m-1}|^{-1}}(v)
\,\lambda^{-m+1} \Big(\prod_{k=1}^{m-1}\mathbbm{1}_{[-\lambda,-\theta\lambda]\cup[\theta\lambda,\lambda]}(u_k)\Big)
\,\textup{d}v \,\textup{d}u \,R^{-m-1} \,\textup{d}y \,\textup{d}x .
\end{align*}
It approximates the original counting form in the limit when $\varepsilon\to0$, as is shown by the following simple lemma.

\begin{lemma}\label{lm:simpconv}
For every $R,\lambda\in(0,\infty)$ and a measurable set $A\subseteq[0,R]^{m+1}$ we have
\[ \lim_{\varepsilon\to0+} \mathcal{N}_{\lambda}^{\varepsilon}(A;R) = \mathcal{N}_{\lambda}^{0}(A;R). \]
\end{lemma}

\begin{proof}[Proof of Lemma \ref{lm:simpconv}]
Denoting
\[ F(u,v) := \int_{\R^{m-1}} \int_{\R^{2}} \mathbbm{1}_A(x,y) \Big(\prod_{k=1}^{m-1}\mathbbm{1}_A(x+u_k \mathbbm{e}_k, y)\Big) \mathbbm{1}_A(x,y+v) \,\textup{d}y \,\textup{d}x \]
and expanding out the convolution in the integral
\[ \int_{\R^{2}} F(u,v) \,\big(\sigma_{m!|u_1\cdots u_{m-1}|^{-1}} \ast \mathbbm{g}_{\varepsilon m!|u_1\cdots u_{m-1}|^{-1}}\big)(v) \,\textup{d}v , \]
we see that $\mathcal{N}_{\lambda}^{\varepsilon}(A;R)$ can be rewritten as
\begin{align*}
\mathcal{N}_{\lambda}^{\varepsilon}(A;R) = \int_{\R^{m-1}} \int_{\R^{2}} \big( F(u,\cdot) \ast \mathbbm{g}_{\varepsilon m!|u_1\cdots u_{m-1}|^{-1}} \big) \,\textup{d}\sigma_{m!|u_1\cdots u_{m-1}|^{-1}} & \\
\times R^{-m-1}\lambda^{-m+1} \Big(\prod_{k=1}^{m-1}\mathbbm{1}_{[-\lambda,-\theta\lambda]\cup[\theta\lambda,\lambda]}(u_k)\Big) \,\textup{d}u & .
\end{align*}
Observe that $F\colon\R^{m-1}\times\R^{2}\to[0,\infty)$ is bounded and continuous. The dominated convergence theorem, applied first to the inner integral for each fixed $u$ and then to the outer integral in $u$, proves that $\mathcal{N}_{\lambda}^{\varepsilon}(A;R)$ converges as $\varepsilon\to0$ to
\begin{align*}
\int_{\R^{m-1}} \int_{\R^{2}} F(u,\cdot) \,\textup{d}\sigma_{m!|u_1\cdots u_{m-1}|^{-1}}
\,R^{-m-1}\lambda^{-m+1} \Big(\prod_{k=1}^{m-1}\mathbbm{1}_{[-\lambda,-\theta\lambda]\cup[\theta\lambda,\lambda]}(u_k)\Big) \,\textup{d}u & ,
\end{align*}
which is precisely $\mathcal{N}_{\lambda}^{0}(A;R)$.
\end{proof}

The strategy is to decompose:
\begin{equation}\label{eq:decomposition}
\mathcal{N}_{\lambda}^{0}(A;R) = \underbrace{\mathcal{N}_{\lambda}^{1}(A;R)}_{\text{structured part}} + \big(\underbrace{\mathcal{N}_{\lambda}^{\varepsilon}(A;R)-\mathcal{N}_{\lambda}^{1}(A;R)}_{\text{error part}}\big) + \big(\underbrace{\mathcal{N}_{\lambda}^{0}(A;R)-\mathcal{N}_{\lambda}^{\varepsilon}(A;R)}_{\text{uniform part}}\big), 
\end{equation}
where the names of the \emph{structured}, \emph{error}, and \emph{uniform} parts are borrowed from regularity lemmas in extremal combinatorics, which they mimic. The structured part is the dominant and positive one, the uniform part is always small, while the error part can be made small only after successful pigeonholing in sufficiently many choices of $\lambda$; see a general discussion of the approach in \cite[\S1.3]{K20:anisotrop}.

In line with that, as the first ingredient we show that the structured part is bounded from below only in terms of $\delta$, uniformly in the parameter $\lambda$ from a certain range.

\begin{lemma}\label{lm:simplstruct}
For every $R,\lambda\in(0,\infty)$ such that $R^{-1/(m-1)}\leq\lambda\leq R$ and a measurable set $A\subseteq[0,R]^{m+1}$ of density $\delta=|A|/R^{m+1}$ we have
\[ \mathcal{N}_{\lambda}^{1}(A;R) \gtrsim \delta^{(m+1)(2m-1)}. \]
\end{lemma}

Note that the implicit constant depends on $m$ alone.

\begin{proof}[Proof of Lemma \ref{lm:simplstruct}]
Recall the bound \eqref{eq:convlowerbd} for the convolution $\sigma\ast\mathbbm{g}$, which can be scaled to give
\begin{align*}
(\sigma\ast\mathbbm{g})_{m!|u_1\cdots u_{m-1}|^{-1}} & \gtrsim |u_1\cdots u_{m-1}|^{2} \mathbbm{1}_{[-m!|u_1\cdots u_{m-1}|^{-1},m!|u_1\cdots u_{m-1}|^{-1}]^{2}} \\
& \geq (\theta \lambda)^{2(m-1)} \mathbbm{1}_{[-\lambda^{-m+1},\lambda^{-m+1}]^{2}}
\end{align*}
for $u_1,\ldots,u_{m-1}\in[\theta\lambda,\lambda]$.
Consequently,
\begin{equation}\label{eq:simpllowerN1}
\mathcal{N}_{\lambda}^{1}(A;R) \gtrsim \theta^{2(m-1)} R^{-m-1} \lambda^{m-1} \int_{D\setminus E} \Phi(u,v,x,y) \,\textup{d}(u,v,x,y),
\end{equation}
where
\begin{align*}
D := \big\{ (u,v,x,y) :\ & |u_k|\leq\lambda\text{ for every }1\leq k\leq m-1, \\
& |v_k|\leq \lambda^{-m+1}\text{ for every }1\leq k\leq 2, \\ 
& 0\leq x_k\leq R\text{ for every }1\leq k\leq m-1, \\ 
& 0\leq y_k\leq R\text{ for every }1\leq k\leq 2 \big\},
\end{align*}
\[ E := \big\{ (u,v,x,y)\in D : |u_k|<\theta\lambda\text{ for some }1\leq k\leq m-1 \big\}, \]
and we also denoted
\[ \Phi(u,v,x,y) := \mathbbm{1}_A(x,y) \Big(\prod_{k=1}^{m-1}\mathbbm{1}_A(x+u_k \mathbbm{e}_k, y)\Big) \mathbbm{1}_A(x,y+v). \]
Let $\mathcal{I}$ denote a partition of $[0,\lceil R/\lambda\rceil\lambda]\supseteq[0,R]$ into intervals of length $\lambda$ and let $\mathcal{Q}$ be a partition of $[0,\lceil R\lambda^{m-1}\rceil\lambda^{-m+1}]^{2}\supseteq[0,R]^{2}$ into squares of side-length $\lambda^{-m+1}$. Clearly, by the construction and the assumption $R\geq\max\{\lambda,\lambda^{-m+1}\}$,
\begin{equation}\label{eq:simplcard}
\mathop{\textup{card}}\mathcal{I} = \Big\lceil \frac{R}{\lambda}\Big\rceil \leq \frac{2R}{\lambda}, \quad \mathop{\textup{card}}\mathcal{Q} = \lceil R\lambda^{m-1}\rceil^{2} \leq (2R\lambda^{m-1})^{2}. 
\end{equation}

The integral
\begin{equation}\label{eq:uinteg1}
\int_{D} \Phi(u,v,x,y) \,\textup{d}(u,v,x,y) 
\end{equation}
is greater than or equal to the sum of integrals
\begin{align}
\mathcal{A} := \int_{I_1\times\cdots\times I_{m-1}\times Q} \Big(\prod_{k=1}^{m-1}\int_{I_k}\mathbbm{1}_A(x_1,\ldots,x_{k-1},x',x_{k+1},\ldots,x_{m-1}, y) \,\textup{d}x'\Big) & \nonumber \\
\times \Big(\int_Q \mathbbm{1}_A(x,y') \,\textup{d}y\Big) \mathbbm{1}_A(x,y) \,\textup{d}(x,y) & \label{eq:uinteg2}
\end{align}
over all choices of $I_1,\ldots,I_{m-1}\in\mathcal{I}$ and $Q\in\mathcal{Q}$.
The total number of such choices is, by \eqref{eq:simplcard},
\begin{equation}\label{eq:simplcompare}
N := (\mathop{\textup{card}}\mathcal{I})^{m-1} (\mathop{\textup{card}}\mathcal{Q})
\leq (2R)^{m+1} \lambda^{m-1}.
\end{equation}
Observe the equalities
\begin{align*}
\mathcal{B} & := \prod_{k=1}^{m-1}\int_{I_1\times\cdots\times I_{m-1}\times Q}\frac{\mathbbm{1}_A(x,y)}{\int_{I_k}\mathbbm{1}_A(\ldots,x_{k-1},x',x_{k+1},\ldots, y) \,\textup{d}x'} \,\textup{d}(x,y) \\
& \,= \big(\lambda^{m-2}(\lambda^{-m+1})^{2}\big)^{m-1} 
= \lambda^{-(m-1)m}
\end{align*}
and
\[ \mathcal{C} := \int_{I_1\times\cdots\times I_{m-1}\times Q} \frac{\mathbbm{1}_A(x,y)}{\int_Q\mathbbm{1}_A(x,y')\,\textup{d}y'} \,\textup{d}(x,y) = \lambda^{m-1}, \]
where the fraction $0/0$ is interpreted as $0$.
We now apply H\"older's inequality for integration over $I_1\times\cdots\times I_{m-1}\times Q$, with $m+1$ factors and all exponents equal $m+1$. It gives
\[ \mathcal{A}\, \mathcal{B}\, \mathcal{C} \geq \Big( \int_{I_1\times\cdots\times I_{m-1}\times Q} \mathbbm{1}_A(x,y)^{m+1} \,\textup{d}(x,y) \Big)^{m+1}, \]
so the integral $\mathcal{A}$ from \eqref{eq:uinteg2} is at least
\[ \lambda^{(m-1)^2} |A\cap(I_1\times\cdots\times I_{m-1}\times Q)|^{m+1}. \]
Thus, \eqref{eq:uinteg1} is bounded from below by
\begin{align*}
& \lambda^{(m-1)^2} \sum_{\substack{I_1,\ldots,I_{m-1}\in\mathcal{I}\\Q\in\mathcal{Q}}} |A\cap(I_1\times\cdots\times I_{m-1}\times Q)|^{m+1} \\
& \geq \lambda^{(m-1)^2} N
\Big( \frac{1}{N} \sum_{\substack{I_1,\ldots,I_{m-1}\in\mathcal{I}\\Q\in\mathcal{Q}}} |A\cap(I_1\times\cdots\times I_{m-1}\times Q)| \Big)^{m+1} \\
& = \lambda^{(m-1)^2} N^{-m} |A|^{m+1} \\
& \geq (2R)^{-m(m+1)} \lambda^{-m+1} |A|^{m+1},
\end{align*}
by Jensen's inequality and \eqref{eq:simplcompare}.
Finally, using the assumption $|A|=\delta R^{m+1}$, we get
\begin{equation}\label{eq:simplintD}
\int_{D} \Phi \geq 2^{-m(m+1)} \delta^{m+1} R^{m+1} \lambda^{-m+1}.
\end{equation}

Next, the integral
\begin{equation*}
\int_{E} \Phi(u,v,x,y) \,\textup{d}(u,v,x,y) 
\end{equation*}
needs to be bounded from above. On the part of the set $E$ where $|u_k|<\theta \lambda$ for a fixed given index $1\leq k\leq m-1$ we simply estimate $\Phi$ by $1$ and integrate out in all variables, bounding that part of the integral by
\[ (\theta\lambda) \lambda^{m-2} (\lambda^{-m+1})^{2} R^{m-1} R^{2} = \theta \lambda^{-m+1} R^{m+1}. \]
Therefore, 
\begin{equation}\label{eq:simplintE}
\int_{E} \Phi \leq m \theta R^{m+1} \lambda^{-m+1}.
\end{equation}
From \eqref{eq:simpllowerN1}, \eqref{eq:simplintD}, and \eqref{eq:simplintE} we conclude
\[ \mathcal{N}_{\lambda}^{1}(A;R) \gtrsim \theta^{2(m-1)} (2^{-m(m+1)}\delta^{m+1} - m\theta), \]
so choosing $\theta$ as in \eqref{eq:simplchoiceoftheta} we end up with
\[ \mathcal{N}_{\lambda}^{1}(A;R) \gtrsim \delta^{2(m+1)(m-1)+m+1} = \delta^{(m+1)(2m-1)}, \]
as desired.
\end{proof}

Next, we turn to the uniform part, for which we show that it decays as $\varepsilon\to0$ uniformly in $\lambda$.

\begin{lemma}\label{lm:simplunif}
For every $\lambda,R\in(0,\infty)$, $\varepsilon\in(0,1]$, and a measurable set $A\subseteq[0,R]^{m+1}$ we have
\[ \big|\mathcal{N}_{\lambda}^{0}(A;R)-\mathcal{N}_{\lambda}^{\varepsilon}(A;R)\big| \lesssim \varepsilon^{1/2}. \]
\end{lemma}

\begin{proof}[Proof of Lemma \ref{lm:simplunif}]
In this proof we conveniently denote
\[ f(x,y;u) := \mathbbm{1}_A(x,y) \prod_{k=1}^{m-1}\mathbbm{1}_A(x+u_k \mathbbm{e}_k, y) \]
and only remember that $f(\cdot;u)$ is bounded by $1$ and supported on $[0,R]^{m+1}$. 
Also define
\[ a = a(u_1,\ldots,u_{m-1}) \]
to be an abbreviation for
\[ a := m!|u_1\cdots u_{m-1}|^{-1}. \]
For any $0<\tau<\varepsilon$, by using the heat equation \eqref{eq:heatequ} in the integral form
\begin{equation}\label{eq:heatequ2}
(\sigma \ast \mathbbm{g}_{\tau})(v) - (\sigma \ast \mathbbm{g}_{\varepsilon})(v) = - \int_{\tau}^{\varepsilon} (\sigma \ast \mathbbm{k}_{t})(v) \,\frac{\textup{d}t}{2\pi t} ,
\end{equation}
we can write
\begin{align*}
\mathcal{N}_{\lambda}^{\tau}(A;R)-\mathcal{N}_{\lambda}^{\varepsilon}(A;R) = 
- \int_{\tau}^{\varepsilon} \int_{\R^{m-1}} \int_{\R^{m-1}} \int_{\R^{2}} \int_{\R^{2}} f(x,y;u) \mathbbm{1}_A(x,y+v) R^{-m-1} \lambda^{-m+1} & \\
\times (\sigma \ast \mathbbm{k}_{t})_{a}(v)
\Big(\prod_{k=1}^{m-1}\mathbbm{1}_{[-\lambda,-\theta\lambda]\cup[\theta\lambda,\lambda]}(u_k)\Big)
\,\textup{d}v \,\textup{d}y \,\textup{d}u \,\textup{d}x \,\frac{\textup{d}t}{2\pi t} & .
\end{align*}
If $\widehat{f}(x,\xi;u)$ denotes the Fourier transform of $y\mapsto f(x,y;u)$ evaluated at $\xi$ and if we similarly define $\widehat{\mathbbm{1}_A}(x,\xi)$, then we can rewrite 
\[ \int_{\R^{2}} \int_{\R^{2}} f(x,y;u) \mathbbm{1}_A(x,y+v) (\sigma \ast \mathbbm{k}_{t})_{a}(v) \,\textup{d}v \,\textup{d}y \]
using Plancherel's theorem on the frequency side as
\[ \int_{\R^{2}} \widehat{\mathbbm{1}_A}(x,\xi) \,\overline{\widehat{f}(x,\xi;u)} \,\widehat{\sigma}(a\xi) \,\widehat{\mathbbm{k}}(t a\xi) \,\textup{d}\xi. \]
This is, by \eqref{eq:measurewithk}, the Cauchy-Schwarz inequality, and another application of Plancherel's theorem, bounded in the absolute value by a constant multiple of
\begin{align*}
& t^{1/2} \Big( \int_{\R^{2}} \big|\widehat{\mathbbm{1}_A}(x,\xi)\big|^2 \,\textup{d}\xi \Big)^{1/2}
\Big( \int_{\R^{2}} \big|\widehat{f}(x,\xi;u)\big|^2 \,\textup{d}\xi \Big)^{1/2} \\
& = t^{1/2} \Big( \int_{\R^{2}} \mathbbm{1}_A(x,y)^2 \,\textup{d}y \Big)^{1/2}
\Big( \int_{\R^{2}} f(x,y;u)^2 \,\textup{d}y \Big)^{1/2}
\leq t^{1/2} R^{2}.
\end{align*}
Thus,
\begin{align*}
\big|\mathcal{N}_{\lambda}^{\tau}(A;R)-\mathcal{N}_{\lambda}^{\varepsilon}(A;R)\big|
\lesssim \int_{\tau}^{\varepsilon} \int_{[0,R]^{m-1}} \int_{[-\lambda,\lambda]^{m-1}}
t^{1/2} R^{-m+1} \lambda^{-m+1} \,\textup{d}u \,\textup{d}x \,\frac{\textup{d}t}{t} & \\
\lesssim \int_{\tau}^{\varepsilon} t^{-1/2}\,\textup{d}t \lesssim \varepsilon^{1/2} & .
\end{align*}
The desired estimate is obtained in the limit as $\tau\to0$, thanks to Lemma \ref{lm:simpconv}.
\end{proof}

The error part is not controlled for a single scale $\lambda$, but rather ``on the average'' over all scales $\lambda=e^\alpha$, $\alpha\in\R$.

\begin{lemma}\label{lm:simplerr}
For every $R\in(0,\infty)$, $\varepsilon\in(0,1]$, and a measurable set $A\subseteq[0,R]^{m+1}$ we have
\[ \int_{\R} \big( \mathcal{N}_{e^{\alpha}}^{\varepsilon}(A;R)-\mathcal{N}_{e^{\alpha}}^{1}(A;R) \big)^2 \,\textup{d}\alpha \lesssim \theta^{-4(m-1)} \Big(\log\frac{1}{\varepsilon}\Big)^2 . \]
\end{lemma}

\begin{proof}[Proof of Lemma \ref{lm:simplerr}]
Using the same notation as in the previous proof and applying \eqref{eq:heatequ2} again, we write
\begin{align*}
\mathcal{N}_{\lambda}^{\varepsilon}(A;R)-\mathcal{N}_{\lambda}^{1}(A;R) = 
- \int_{\varepsilon}^{1} \int_{e^{-1}t\lambda^{-m+1}}^{t\lambda^{-m+1}} \int_{\R^{m-1}} \int_{\R^{m-1}} \int_{\R^{2}} \int_{\R^{2}} f(x,y;u) \mathbbm{1}_A(x,y+v) & \\
R^{-m-1} \lambda^{-m+1} (\sigma \ast \mathbbm{k}_{t})_{a}(v)
\Big(\prod_{k=1}^{m-1}\mathbbm{1}_{[-\lambda,-\theta\lambda]\cup[\theta\lambda,\lambda]}(u_k)\Big)
\,\textup{d}v \,\textup{d}y \,\textup{d}u \,\textup{d}x \,\frac{\textup{d}s}{s} \,\frac{\textup{d}t}{2\pi t} & .
\end{align*}
Here we also introduced the new integration variable,
\begin{equation}\label{eq:simplwheres} 
s\in[e^{-1}t\lambda^{-m+1},t\lambda^{-m+1}]. 
\end{equation}
The corresponding integral of $1/s$ equals $1$, so the above equality is still valid.
Whenever $\lambda\in(0,\infty)$, $t\in[\varepsilon,1]$, $s$ is as in \eqref{eq:simplwheres}, and 
$\theta\lambda\leq|u_k|\leq\lambda$
for $1\leq k\leq m-1$, then we define
\begin{equation}\label{eq:defofr1}
r = r(\lambda,t,s,u_1,\ldots,u_{m-1})\in(0,\infty)
\end{equation}
as the unique solution to
\begin{equation}\label{eq:defofr2}
2s^2 + r^2 = (ta)^2.
\end{equation}
It is well-defined since $ta\geq m!t\lambda^{-m+1}\geq 2s$.
Then convolution identities \eqref{eq:convidkg} and \eqref{eq:convidhh} yield
\[ \mathbbm{k}_{ta} = \Big(\frac{ta}{s}\Big)^2 \sum_{l=1}^{2} \mathbbm{h}^{(l)}_{s} \ast \mathbbm{h}^{(l)}_{s} \ast \mathbbm{g}_{r}, \]
so
\begin{align*} 
& - \int_{\R^{2}} \int_{\R^{2}} f(x,y;u) \mathbbm{1}(x,y+v) (\sigma \ast \mathbbm{k}_{t})_{a}(v) \,\textup{d}v \,\textup{d}y \\
& = \Big(\frac{ta}{s}\Big)^2 \sum_{l=1}^{2} \int_{\R^{2}} \int_{\R^{2}} \Big( \int_{\R^{2}} f(x,y;u) \mathbbm{h}^{(l)}_{s}(y-z) \,\textup{d}y \Big) \\
& \qquad\qquad\qquad\qquad \times\Big( \int_{\R^{2}} \mathbbm{1}_{A}(x,y') \mathbbm{h}^{(l)}_{s}(y'-z') \,\textup{d}y' \Big) (\sigma_{a}\ast\mathbbm{g}_{r})(z-z') \,\textup{d}z \,\textup{d}z'.
\end{align*}
If the numbers $t,s,u_k$ are as before, then
$ta/s \lesssim \theta^{-m+1}$, which in turn implies
\begin{align*}
\big| \mathcal{N}_{\lambda}^{\varepsilon}(A;R) - \mathcal{N}_{\lambda}^{1}(A;R) \big|
\lesssim\, & (\theta^2\lambda)^{-m+1}  R^{-m-1} \sum_{l=1}^{2} \int_{\varepsilon}^{1} \int_{e^{-1}t\lambda^{-m+1}}^{t\lambda^{-m+1}} \int_{\R^{m-1}} \int_{\R^{m-1}} \int_{\R^{2}} \int_{\R^{2}} \\
& \Big| \int_{\R^{2}} f(x,y;u) \mathbbm{h}^{(l)}_{s}(y-z) \,\textup{d}y \Big| 
\Big| \int_{\R^{2}} \mathbbm{1}_{A}(x,y') \mathbbm{h}^{(l)}_{s}(y'-z') \,\textup{d}y' \Big| \\
& \times (\sigma_a \ast \mathbbm{g}_{r})(z-z') \Big(\prod_{k=1}^{m-1}\mathbbm{1}_{[-\lambda,-\theta\lambda]\cup[\theta\lambda,\lambda]}(u_k)\Big) 
\,\textup{d}z \,\textup{d}z' \,\textup{d}u \,\textup{d}x \,\frac{\textup{d}s}{s} \,\frac{\textup{d}t}{t} .
\end{align*}
By the Cauchy--Schwarz inequality (in all variables but $y$ and $y'$) we estimate
\begin{equation}\label{eq:splitAB}
\big( \mathcal{N}_{\lambda}^{\varepsilon}(A;R)-\mathcal{N}_{\lambda}^{1}(A;R) \big)^2 \lesssim (\theta^4 \lambda^2)^{-m+1}  R^{-2m-2} \mathcal{A} \,\mathcal{B},
\end{equation}
where 
\begin{align*}
\mathcal{A} : = \sum_{l=1}^{2} \int_{\varepsilon}^{1} \int_{e^{-1}t\lambda^{-m+1}}^{t\lambda^{-m+1}} \int_{[0,R]^{m-1}} \int_{\R^{m-1}} \int_{\R^{2}} \int_{\R^{2}} 
\Big( \int_{\R^{2}} f(x,y;u) \mathbbm{h}^{(l)}_{s}(y-z) \,\textup{d}y \Big)^2 & \\
\times (\sigma_a \ast \mathbbm{g}_{r})(z-z') \Big(\prod_{k=1}^{m-1}\mathbbm{1}_{[-\lambda,-\theta\lambda]\cup[\theta\lambda,\lambda]}(u_k)\Big) 
\,\textup{d}z \,\textup{d}z' \,\textup{d}u \,\textup{d}x \,\frac{\textup{d}s}{s} \,\frac{\textup{d}t}{t} &
\end{align*}
and
\begin{align*}
\mathcal{B} : = \sum_{l=1}^{2} \int_{\varepsilon}^{1} \int_{e^{-1}t\lambda^{-m+1}}^{t\lambda^{-m+1}} \int_{[0,R]^{m-1}} \int_{\R^{m-1}} \int_{\R^{2}} \int_{\R^{2}} \Big( \int_{\R^{2}} \mathbbm{1}_{A}(x,y') \mathbbm{h}^{(l)}_{s}(y'-z') \,\textup{d}y' \Big)^2 & \\
\times (\sigma_a \ast \mathbbm{g}_{r})(z-z') \Big(\prod_{k=1}^{m-1}\mathbbm{1}_{[-\lambda,-\theta\lambda]\cup[\theta\lambda,\lambda]}(u_k)\Big) 
\,\textup{d}z \,\textup{d}z' \,\textup{d}u \,\textup{d}x \,\frac{\textup{d}s}{s} \,\frac{\textup{d}t}{t} & .
\end{align*}

First, we turn our attention to $\mathcal{A}$. Integrating out in $z'$, estimating crudely by Young's inequality \eqref{eq:Youngineq},
{\allowdisplaybreaks\begin{align*}
& \int_{\R^2} \Big( \int_{\R^{2}} f(x,y;u) \mathbbm{h}^{(l)}_{s}(y-z) \,\textup{d}y \Big)^2 \,\textup{d}z \\
& \leq \int_{\R^2} \int_{\R^2} f(x,y;u) f(x,y';u)\big(|\mathbbm{h}^{(l)}_{s}|\ast|\mathbbm{h}^{(l)}_{s}|\big)(y-y') \,\textup{d}y \,\textup{d}y' \\
& = \big\langle f(x,\cdot;u)\ast |\mathbbm{h}^{(l)}_{s}|\ast|\mathbbm{h}^{(l)}_{s}| , f(x,\cdot;u) \big\rangle_{\textup{L}^2(\R^2)} \\
& \leq \|f(x,\cdot;u)\|_{\textup{L}^2(\R^2)}^2 \|\mathbbm{h}^{(l)}\|_{\textup{L}^1(\R^2)}^2 \lesssim R^2,
\end{align*}}
and finally integrating out in $u,x,s,t$, we bound
\[ \mathcal{A} \lesssim R^{m+1} \lambda^{m-1} \log\frac{1}{\varepsilon}. \]
On the other hand, $\mathcal{B}$ is easily controlled by integrating immediately in $z$ and $u$:
\begin{align*} 
\mathcal{B} & \lesssim \lambda^{m-1} \sum_{l=1}^{2} \int_{\varepsilon}^{1} \int_{e^{-1}t\lambda^{-m+1}}^{t\lambda^{-m+1}} \int_{\R^{m-1}} \int_{\R^{2}} \Big( \int_{\R^{2}} \mathbbm{1}_{A}(x,y') \mathbbm{h}^{(l)}_{s}(y'-z') \,\textup{d}y' \Big)^2 \,\textup{d}z' \,\textup{d}x \,\frac{\textup{d}s}{s} \,\frac{\textup{d}t}{t} \\
& = -\frac{1}{2} \lambda^{m-1} \int_{\varepsilon}^{1} \int_{e^{-1}t\lambda^{-m+1}}^{t\lambda^{-m+1}} \int_{\R^{m-1}} \big\langle \mathbbm{1}_{A}(x,\cdot)\ast \mathbbm{k}_{s\sqrt{2}}, \mathbbm{1}_{A}(x,\cdot) \big\rangle_{\textup{L}^2(\R^2)} \,\textup{d}x \,\frac{\textup{d}s}{s} \,\frac{\textup{d}t}{t} \\
& = 4\pi^2 \lambda^{m-1} \int_{\varepsilon}^{1} \int_{e^{-1}t\lambda^{-m+1}}^{t\lambda^{-m+1}} \int_{\R^{m-1}} \int_{\R^2} \big|\widehat{\mathbbm{1}_{A}}(x,\xi)\big|^2 e^{-2\pi s^2 |\xi|^2} |\xi|^2 \,\textup{d}\xi \,\textup{d}x \,s\,\textup{d}s \,\frac{\textup{d}t}{t} ,
\end{align*}
where we also used \eqref{eq:convidhh}, Plancherel's theorem, and the third formula from \eqref{eq:Ftransforms}.
Combine the obtained estimates for $\mathcal{A}$ and $\mathcal{B}$ with \eqref{eq:splitAB}, substitute $\lambda=e^{\alpha}$, and integrate  in $\alpha$ to obtain
\begin{align*}
\int_{\R} \big( \mathcal{N}_{e^{\alpha}}^{\varepsilon}(A;R) - & \mathcal{N}_{e^{\alpha}}^{1}(A;R) \big)^2 \,\textup{d}\alpha 
\lesssim \theta^{-4m+4} R^{-m-1} \Big(\log\frac{1}{\varepsilon}\Big) \\
& \times \int_{\varepsilon}^{1} \int_{\R} \int_{te^{-(m-1)\alpha-1}}^{te^{-(m-1)\alpha}} \int_{\R^{m-1}} \int_{\R^2} \big|\widehat{\mathbbm{1}_{A}}(x,\xi)\big|^2 e^{-2\pi s^2 |\xi|^2} |\xi|^2 \,\textup{d}\xi \,\textup{d}x \,s\,\textup{d}s \,\textup{d}\alpha \,\frac{\textup{d}t}{t} .
\end{align*}
Interchanging the integrals in $\alpha$ and $s$, 
\[ \int_{\R} \int_{te^{-(m-1)\alpha-1}}^{te^{-(m-1)\alpha}} \cdots \,\textup{d}s \,\textup{d}\alpha
= \int_{0}^{\infty} \int_{-(\log(t/s)+1)/(m-1)}^{-(\log(t/s))/(m-1)} \cdots \,\textup{d}\alpha \,\textup{d}s, \]
and then performing the integration respectively in $\alpha$, $s$, $t$, and $\xi$,
we see that the expression from the right hand side of the previous display is actually equal to
\[ \frac{1}{4\pi(m-1)} \theta^{-4m+4} R^{-m-1} \Big(\log\frac{1}{\varepsilon}\Big)^2 \int_{\R^{m-1}} \big\| \mathbbm{1}_{A}(x,\cdot) \big\|_{\textup{L}^2(\R^2)}^2 \,\textup{d}x. \]
Note that the integral in the expression above is $|A|$, which is at most $R^{m+1}$.
\end{proof}

\begin{proof}[Proof of Theorem \ref{thm:simpldensity}]
For a given $\delta\in(0,1]$ we can choose 
\[ \varepsilon\sim\delta^{2(m+1)(2m-1)}, \]
so that Lemmas \ref{lm:simplstruct} and \ref{lm:simplunif} give
\[ \big|\mathcal{N}_{\lambda}^{0}(A;R)-\mathcal{N}_{\lambda}^{\varepsilon}(A;R)\big| \leq \frac{1}{3} \mathcal{N}_{\lambda}^{1}(A;R) \]
whenever $R\geq\lambda\geq1$.
Next, using Lemma \ref{lm:simplerr} we can find a positive number
\begin{align*} 
J & \sim \theta^{-4m+4} \delta^{-2(m+1)(2m-1)} \Big(\log\frac{1}{\varepsilon}\Big)^2 \\
& \sim \Big(\frac{1}{\delta}\Big)^{2(m+1)(4m-3)} \Big(1+\log\frac{1}{\delta}\Big)^2
\end{align*}
and then a number $\beta\in[0,J]$ such that
\[ \big|\mathcal{N}_{e^{\beta}}^{\varepsilon}(A;R)-\mathcal{N}_{e^{\beta}}^{1}(A;R)\big| 
\leq \Big( \frac{1}{J} \int_{0}^{J} \big( \mathcal{N}_{e^{\alpha}}^{\varepsilon}(A;R)-\mathcal{N}_{e^{\alpha}}^{1}(A;R) \big)^2 \,\textup{d}\alpha \Big)^{1/2}
\leq \frac{1}{3} \mathcal{N}_{e^\beta}^{1}(A;R), \]
as soon as $R$ is large enough that we can actually apply Lemma \ref{lm:simplstruct} for any such $\lambda=e^\beta$, i.e., 
\begin{equation}\label{eq:chooseRforJ}
R \geq e^J.
\end{equation}
From decomposition \eqref{eq:decomposition} we finally get
\[ \mathcal{N}_{e^{\beta}}^{0}(A;R) \geq \frac{1}{3}\mathcal{N}_{e^{\beta}}^{1}(A;R) > 0, \]
which guarantees existence in $A$ of the desired configuration \eqref{eq:wefoundsimplex} spanning a simplex of volume $1$.
Note that \eqref{eq:chooseRforJ} is in turn guaranteed if, say, 
\[ R\geq\exp\big(C\delta^{-9m^2}\big) \]
for a large constant $C$.
The last condition can be rewritten precisely as the assumption \eqref{eq:conddens} of Theorem \ref{thm:simpldensity}(a):
\[ \delta \geq \Big(\frac{C}{\log R}\Big)^{1/(9m^2)}. \qedhere \]
\end{proof}

\begin{proof}[Proof of Corollary \ref{cor:simpldensity}]
Take a set $A\subseteq\R^n$ of positive upper Banach density.
Let us first concentrate on the case $V=1$.
Denote
\[ R_0 := \exp\big(2C_m\bar{\delta}_{n}(A)^{-9m^2}\big), \]
where $C_m$ is the constant from Theorem \ref{thm:simpldensity}.
If we can find an axes-aligned cube $Q\subset\R^n$ of side-length $R_0$ on which $A$ has density greater than $(C_m/\log R_0)^{1/(9m^2)}$, then part (a) of that same theorem will apply to this cube $Q$, only translated from the position $[0,R_0]^n$.
It will give us a right $m$-simplex of volume $1$ with vertices in $A\cap Q$.
Moreover, the ratio of lengths of any two of its perpendicular edges will be at most 
\[ \frac{R_0^m}{m!} \leq \exp\big(2mC_m\bar{\delta}_{n}(A)^{-9m^2}\big) , \] 
as desired.

Thus, suppose that, on the contrary, for every such cube $Q$ of side-length $R_0$ we have
\begin{equation}\label{eq:auxcub}
\frac{|A\cap Q|}{|Q|} \leq \Big(\frac{C_m}{\log R_0}\Big)^{1/(9m^2)}.
\end{equation}
Take some $R\geq R_0$ and an arbitrary $x\in\R^n$. We can easily partition the cube $x+[0,R]^n$ into a collection $\mathcal{Q}$ of disjoint cubes of side-length $R_0$ and a remainder of volume less than $n R_0 R^{n-1}$.
By the assumption \eqref{eq:auxcub} applied to each cube $Q\in\mathcal{Q}$, we conclude that
\[ \frac{|A\cap(x+[0,R]^n)|}{R^n} \leq \Big(\frac{C_m}{\log R_0}\Big)^{1/(9m^2)} + \frac{n R_0}{R}. \]
Taking the supremum in $x$ and the upper limit in $R$, we obtain
\[ \bar{\delta}_{n}(A) \leq \Big(\frac{C_m}{\log R_0}\Big)^{1/(9m^2)}, \]
which contradicts the choice of $R_0$. 

Finally, for an arbitrary $V>0$ the set $V^{-1/m}A$ clearly has the same upper Banach density $\bar{\delta}_{n}(A)$, so the previous case applies to it and gives a right simplex of volume $1$ with vertices in $V^{-1/m}A$ and with controlled ratios of lengths of its perpendicular edges. It remains to scale the whole picture back by factor $V^{1/m}$ and thus obtain the desired simplex of volume $V$.
\end{proof}


\section{Large rectangular boxes: proof of Theorem \ref{thm:boxdensity}}
Fix $n\geq m+1$. If the statement in part (a) of the theorem did not hold, then there would exist a sequence of positive numbers $(\lambda_j)_{j=1}^{\infty}$ converging to $\infty$ such that for every index $j$ the set $A$ does not contain all vertices of any $m$-dimensional rectangular box (aligned as claimed in the theorem) in $\R^n$ with volume precisely $\lambda_j^m$.
Passing to a subsequence we could also ensure $\lambda_{j+1}\geq2\lambda_j$ for each index $j$.
The usual localization argument will allow us to work on a sufficiently large cube $[0,R]^n$ and with sufficiently (but finitely) many scales $\lambda_j$, $1\leq j\leq J$.
Indeed, leaving the choice of $J$ (depending on $\bar{\delta}_n(A)$) for later, we can find a number $R$ larger than $\lambda_J$ for which there exists $x\in\R^n$ such that
\[ \frac{|A\cap(x+[0,R]^n)|}{R^n} \geq \delta := \frac{\bar{\delta}_n(A)}{2}. \]
The set $A' := (A-x)\cap[0,R]^n$ is then a subset of the cube $[0,R]^n$ with density at least $\delta$ and it still does not contain the required boxes of volumes $\lambda_j^m$, $1\leq j\leq J$. 

Thus, it is sufficient to work, from the very beginning, with a measurable subset $A$ of a large cube $[0,R]^n$ with density $\delta>0$. We will obtain a contradiction with the assumption that the $m$-volumes of the (appropriately aligned) boxes with vertices in $A$ omit the numbers $\lambda_j^m$, $1\leq j\leq J$.
Here we work with a large number $J$ that we can choose depending on $\delta$, but the cube side-length $R>0$ needs to be an arbitrary number larger than some threshold (which in turn is allowed to depend on $\delta$ and $J$). 

It will be convenient to define
\begin{equation}\label{eq:boxchoiceoftheta}
\theta = m^{-1} 2^{-2^m n} \delta^{2^m}.
\end{equation}
Let $\sigma$ be the spherical measure on $\R^{n-m+1}$, supported on the standard unit sphere in that space and having the total mass equal to $1$.
The appropriate pattern-counting form is now, for every $\lambda\in(0,\infty)$,
\begin{align*}
\mathcal{N}_{\lambda}^{0}(A;R) :=\, 
& \int_{\R^{m-1}} \int_{\R^{n-m+1}} \int_{\R^{m-1}} \int_{\R^{n-m+1}} \\
& \Big(\prod_{(r_1,\ldots,r_m)\in\{0,1\}^m} \mathbbm{1}_A(x_1+r_1 u_1, \ldots, x_{m-1}+r_{m-1}u_{m-1}, y+r_m v)\Big) \\
& \textup{d}\sigma_{\lambda^m|u_1\cdots u_{m-1}|^{-1}}(v) \,\lambda^{-m+1} \Big(\prod_{k=1}^{m-1}\mathbbm{1}_{[-\lambda,-\theta\lambda]\cup[\theta\lambda,\lambda]}(u_k)\Big) \,\textup{d}u \,R^{-n} \,\textup{d}y \,\textup{d}x ,
\end{align*}
where it is understood that 
\[ u=(u_1,\ldots,u_{m-1}),\quad x=(x_1,\ldots,x_{m-1}). \]
The smoothed counting form is defined, for every $\varepsilon\in(0,1]$, as
\begin{align*}
\mathcal{N}_{\lambda}^{\varepsilon}(A;R) :=\, 
& \int_{\R^{m-1}} \int_{\R^{n-m+1}} \int_{\R^{m-1}} \int_{\R^{n-m+1}} \\
& \Big(\prod_{(r_1,\ldots,r_m)\in\{0,1\}^m} \mathbbm{1}_A(x_1+r_1 u_1, \ldots, x_{m-1}+r_{m-1}u_{m-1}, y+r_m v)\Big) & \\
& (\sigma \ast \mathbbm{g}_{\varepsilon})_{\lambda^m|u_1\cdots u_{m-1}|^{-1}}(v) \,\textup{d}v \,\lambda^{-m+1} \Big(\prod_{k=1}^{m-1}\mathbbm{1}_{[-\lambda,-\theta\lambda]\cup[\theta\lambda,\lambda]}(u_k)\Big) \,\textup{d}u \,R^{-n} \,\textup{d}y \,\textup{d}x ,
\end{align*}
The analogue of Lemma \ref{lm:simpconv} holds and is shown in exactly the same way.
We will apply the decomposition \eqref{eq:decomposition} here as well.
The following three lemmas respectively handle the structured, uniform, and error parts.

\begin{lemma}\label{lm:boxstruct}
For numbers $0<\lambda\leq R$ and a measurable set $A\subseteq[0,R]^n$ with density $\delta=|A|/R^n$ we have
\[ \mathcal{N}_{\lambda}^{1}(A;R) \gtrsim_\delta 1. \]
\end{lemma}

The lower bound that we prove is, in fact, a concrete one, involving a certain power of $\delta$, but Theorem \ref{thm:boxdensity} is not quantitative anyway.

\begin{proof}[Proof of Lemma \ref{lm:boxstruct}]
For arbitrary $u_1,\ldots,u_{m-1}\in[\theta\lambda,\lambda]$ we can estimate the convolution of the measure $\sigma$ with the Gaussian $\mathbbm{g}$ using \eqref{eq:convlowerbd} as
\begin{align*}
(\sigma\ast\mathbbm{g})_{\lambda^m|u_1\cdots u_{m-1}|^{-1}} & \gtrsim (\lambda^{-m}|u_1\cdots u_{m-1}|)^{n-m+1} \mathbbm{1}_{[-\lambda^m|u_1\cdots u_{m-1}|^{-1}, \lambda^m|u_1\cdots u_{m-1}|^{-1}]^{n-m+1}} \\
& \geq \theta^{(m-1)(n-m+1)} \lambda^{-n+m-1} \mathbbm{1}_{[-\lambda,\lambda]^{n-m+1}}.
\end{align*}
Similarly as in the previous section, we bound 
\begin{equation}\label{eq:boxlowerN1}
\mathcal{N}_{\lambda}^{1}(A;R) \gtrsim \theta^{(m-1)(n-m+1)} (R\lambda)^{-n} \int_{D\setminus E} \Phi(u,v,x,y) \,\textup{d}(u,v,x,y),
\end{equation}
where
\begin{align*}
D &:= [-\lambda,\lambda]^{m-1} \times [-\lambda,\lambda]^{n-m+1} \times [0,R]^{m-1} \times [0,R]^{n-m+1}, \\
E & := \big\{ (u,v,x,y)\in D : |u_k|<\theta\lambda\text{ for some }1\leq k\leq m-1 \big\}, \\
\Phi(u,v,x,y) & := \prod_{(r_1,\ldots,r_m)\in\{0,1\}^m} \mathbbm{1}_A(x_1+r_1 u_1, \ldots, x_{m-1}+r_{m-1}u_{m-1}, y+r_m v).
\end{align*}
Split $[0,\lceil R/\lambda\rceil\lambda]\supseteq[0,R]$ into intervals of length $\lambda$, denoting the resulting partition by $\mathcal{I}$, and split $[0,\lceil R/\lambda\rceil\lambda]^{n-m+1}\supseteq[0,R]^{n-m+1}$ into cubes of side-length $\lambda$, denoting the obtained partition by $\mathcal{Q}$.
Then
\begin{equation}\label{eq:boxcard}
\mathop{\textup{card}}\mathcal{I} \leq \frac{2R}{\lambda}, \quad \mathop{\textup{card}}\mathcal{Q} \leq \Big(\frac{2R}{\lambda}\Big)^{n-m+1}. 
\end{equation}

The integral of $\Phi$ over $D$ is at least
\begin{align*}
\sum_{\substack{I_1,\ldots,I_{m-1}\in\mathcal{I}\\Q\in\mathcal{Q}}} \int_{I_1} \int_{I_1} \cdots \int_{I_{m-1}} \int_{I_{m-1}} \int_{Q} \int_{Q} \prod_{(r_1,\ldots,r_m)\in\{0,1\}^m} \mathbbm{1}_A(x_1^{r_1}, \ldots, x_{m-1}^{r_{m-1}}, y^{r_m}) & \\
\textup{d}y^0 \,\textup{d}y^1 \,\textup{d}x_{m-1}^0 \,\textup{d}x_{m-1}^1 \cdots \textup{d}x_1^0 \,\textup{d}x_1^1 & ,
\end{align*}
(Note that here, and a few times later, zeros and ones in the exponents are simply upper indices.)
This is, by the Cauchy--Schwarz inequality for the Gowers-box inner products \cite{Shk06,GT08,Tao07} (or simply by repeated applications of the ordinary Cauchy--Schwarz inequality as in \cite[\S7.1]{DK22}), followed by Jensen's inequality for discrete sums and by \eqref{eq:boxcard}, greater than or equal to
\begin{align*}
& \sum_{\substack{I_1,\ldots,I_{m-1}\in\mathcal{I}\\Q\in\mathcal{Q}}} (|I_1|\cdots|I_{m-1}||Q|)^{2-2^m} \Big( \int_{I_1} \cdots \int_{I_{m-1}} \int_{Q} \mathbbm{1}_A(x_1, \ldots, x_{m-1}, y) 
\,\textup{d}y \,\textup{d}x_{m-1} \cdots \textup{d}x_1 \Big)^{2^m} \\
& = \sum_{\substack{I_1,\ldots,I_{m-1}\in\mathcal{I}\\Q\in\mathcal{Q}}} \lambda^{(2-2^m)n} |A\cap(I_1\times\cdots\times I_{m-1}\times Q)|^{2^m} \\
& \geq \big((\mathop{\textup{card}}\mathcal{I})^{m-1} (\mathop{\textup{card}}\mathcal{Q})\big)^{1-2^m} \lambda^{(2-2^m)n} |A|^{2^m}
\geq 2^{(1-2^m)n} (R\lambda)^n \delta^{2^m}.
\end{align*}

The integral of $\Phi$ over $E$ is at most
\[ |E| \leq (m-1) \theta (R\lambda)^n. \]
Thus, by \eqref{eq:boxlowerN1},
\[ \mathcal{N}_{\lambda}^{1}(A;R) \gtrsim \theta^{(m-1)(n-m+1)} \big( 2^{(1-2^m)n} \delta^{2^m}- m\theta \big). \]
The choice \eqref{eq:boxchoiceoftheta} for $\theta$ is a good one and it gives
\[ \mathcal{N}_{\lambda}^{1}(A;R) \gtrsim \delta^{2^m((m-1)(n-m+1)+1)} \gtrsim_\delta 1. \qedhere \]
\end{proof}

\begin{lemma}\label{lm:boxunif}
For every $\lambda,R\in(0,\infty)$, every $\varepsilon\in(0,1]$, and every measurable set $A\subseteq[0,R]^n$ we have
\[ \big|\mathcal{N}_{\lambda}^{0}(A;R)-\mathcal{N}_{\lambda}^{\varepsilon}(A;R)\big| \lesssim \varepsilon^{1/2}. \]
\end{lemma}

\begin{proof}[Proof of Lemma \ref{lm:boxunif}]
The proof is almost the same as that of Lemma \ref{lm:simplunif}.
Here we find convenient to introduce the function
\[ f(x,y;u) := \prod_{(r_1,\ldots,r_{m-1})\in\{0,1\}^{m-1}} \mathbbm{1}_A(x_1+r_1 u_1, \ldots, x_{m-1}+r_{m-1}u_{m-1}, y) \]
and the number 
\[ a = a(\lambda,u_1,\ldots,u_{m-1}) \]
defined as
\[ a := \lambda^m|u_1\cdots u_{m-1}|^{-1}. \]
For any $0<\tau<\varepsilon$ integrate the heat equation \eqref{eq:heatequ} as in \eqref{eq:heatequ2} to obtain 
\begin{align*}
\mathcal{N}_{\lambda}^{\tau}(A;R)-\mathcal{N}_{\lambda}^{\varepsilon}(A;R) = 
& - \int_{\tau}^{\varepsilon} \int_{\R^{m-1}} \int_{\R^{m-1}} \int_{\R^{n-m+1}} \int_{\R^{n-m+1}} f(x,y;u) f(x,y+v;u) \\
& \times R^{-n} \lambda^{-m+1} (\sigma \ast \mathbbm{k}_{t})_{a}(v)
\Big(\prod_{k=1}^{m-1}\mathbbm{1}_{[-\lambda,-\theta\lambda]\cup[\theta\lambda,\lambda]}(u_k)\Big)
\,\textup{d}v \,\textup{d}y \,\textup{d}u \,\textup{d}x \,\frac{\textup{d}t}{2\pi t} .
\end{align*}
Write
\[ \int_{\R^{n-m+1}} \int_{\R^{n-m+1}} f(x,y;u) f(x,y+v;u) (\sigma \ast \mathbbm{k}_{t})_{a}(v) \,\textup{d}v \,\textup{d}y \]
as
\[ \int_{\R^{n-m+1}} \big|\widehat{f}(x,\xi;u)\big|^2 \,\widehat{\sigma}(a\xi) \,\widehat{\mathbbm{k}}(ta\xi) \,\textup{d}\xi, \]
the absolute value of which is bounded, thanks to \eqref{eq:measurewithk}, by a constant times
\[ t^{1/2} \int_{\R^{n-m+1}} \big|\widehat{f}(x,\xi;u)\big|^2 \,\textup{d}\xi
= t^{1/2} \int_{\R^{n-m+1}} f(x,y;u)^2 \,\textup{d}y 
\leq t^{1/2} R^{n-m+1}. \]
Thus,
\begin{align*}
\big|\mathcal{N}_{\lambda}^{\tau}(A;R)-\mathcal{N}_{\lambda}^{\varepsilon}(A;R)\big|
\lesssim \int_{\tau}^{\varepsilon} \int_{[0,R]^{m-1}} \int_{[-\lambda,\lambda]^{m-1}}
t^{1/2} R^{-m+1} \lambda^{-m+1} \,\textup{d}u \,\textup{d}x \,\frac{\textup{d}t}{t} & \\
\lesssim \int_{\tau}^{\varepsilon} t^{-1/2}\,\textup{d}t \lesssim \varepsilon^{1/2} & .
\end{align*}
Letting $\tau\to0$ we prove the lemma.
\end{proof}

\begin{lemma}\label{lm:boxerr}
For every $R\in(0,\infty)$, every $\varepsilon\in(0,1]$, every choice of positive numbers $(\lambda_j)_{1\leq j\leq J}$ satisfying $\lambda_{j+1}\geq2\lambda_j$, and every measurable set $A\subseteq[0,R]^n$ we have
\[ \sum_{j=1}^{J} \big|\mathcal{N}_{\lambda_j}^{\varepsilon}(A;R)-\mathcal{N}_{\lambda_j}^{1}(A;R)\big| \lesssim_{\delta,\varepsilon} 1. \]
\end{lemma}

\begin{proof}[Proof of Lemma \ref{lm:boxerr}]
Let $f$ and $a$ be defined as in the proof of Lemma \ref{lm:boxunif}. In the same way we arrive at
\begin{align*}
& \mathcal{N}_{\lambda}^{\varepsilon}(A;R)-\mathcal{N}_{\lambda}^{1}(A;R) \\
& = - \int_{\varepsilon}^{1} \int_{t\lambda/(10e)}^{t\lambda/10} \int_{\R^{m-1}} \int_{\R^{m-1}} \int_{\R^{n-m+1}} \int_{\R^{n-m+1}} f(x,y;u) f(x,y+v;u) \\
& \qquad \times R^{-n}\lambda^{-m+1} (\sigma \ast \mathbbm{k}_{t})_{a}(v)
\Big(\prod_{k=1}^{m-1}\mathbbm{1}_{[-\lambda,-\theta\lambda]\cup[\theta\lambda,\lambda]}(u_k)\Big)
\,\textup{d}v \,\textup{d}y \,\textup{d}u \,\textup{d}x \,\frac{\textup{d}s}{s} \,\frac{\textup{d}t}{2\pi t} ,
\end{align*}
noting that here we have also introduced the integral of $1/s$ in the variable 
\begin{equation*}
s\in[10^{-1}e^{-1}t\lambda,10^{-1}t\lambda],
\end{equation*}
which equals $1$, so it does not change the value of the whole expression.
The shorthand notation $r$ has the same meaning as in \eqref{eq:defofr1} and \eqref{eq:defofr2}, except that $s$ is now taken from a different interval.
Once again, convolution identities \eqref{eq:convidkg} and \eqref{eq:convidhh} give
\[ \mathbbm{k}_{ta} = \Big(\frac{ta}{s}\Big)^2 \sum_{l=1}^{n-m+1} \mathbbm{h}^{(l)}_{s} \ast \mathbbm{h}^{(l)}_{s} \ast \mathbbm{g}_{r}, \]
so
\[ - \int_{\R^{n-m+1}} \int_{\R^{n-m+1}} f(x,y;u) f(x,y+v;u) (\sigma \ast \mathbbm{k}_{t})_{a}(v) \,\textup{d}v \,\textup{d}y \]
can be rewritten as
\begin{align*} 
\Big(\frac{ta}{s}\Big)^2 \sum_{l=1}^{n-m+1} \int_{\R^{n-m+1}} \int_{\R^{n-m+1}} & \Big( \int_{\R^{n-m+1}} f(x,y;u) \mathbbm{h}^{(l)}_{s}(y-z) \,\textup{d}y \Big) \\
\times & \Big( \int_{\R^{n-m+1}} f(x,y';u) \mathbbm{h}^{(l)}_{s}(y'-z') \,\textup{d}y' \Big) (\sigma_{a}\ast\mathbbm{g}_{r})(z-z') \,\textup{d}z \,\textup{d}z'.
\end{align*}
Under the current assumption on the numbers $u_k$ we have $\lambda \leq a \leq \theta^{-m+1} \lambda$, which implies $s \sim t\lambda \sim_\delta t a$ and, taking absolute values, we bound the difference of the counting forms as
\begin{align*}
\big| \mathcal{N}_{\lambda}^{\varepsilon}(A;R) & - \mathcal{N}_{\lambda}^{1}(A;R) \big|
\lesssim_\delta  R^{-n} \sum_{l=1}^{n-m+1} \int_{\varepsilon}^{1} \int_{t\lambda/(10e)}^{t\lambda/10} \int_{\R^{m-1}} \int_{\R^{m-1}} \int_{\R^{n-m+1}} \int_{\R^{n-m+1}} \\
& \Big| \int_{\R^{n-m+1}} f(x,y;u) \mathbbm{h}^{(l)}_{s}(y-z) \,\textup{d}y \Big| 
\Big| \int_{\R^{n-m+1}} f(x,y';u) \mathbbm{h}^{(l)}_{s}(y'-z') \,\textup{d}y' \Big| \\
& \times (\sigma_a \ast \mathbbm{g}_{r})(z-z') \,\lambda^{-m+1} \Big(\prod_{k=1}^{m-1}\mathbbm{1}_{[-\lambda,-\theta\lambda]\cup[\theta\lambda,\lambda]}(u_k)\Big) 
\,\textup{d}z \,\textup{d}z' \,\textup{d}u \,\textup{d}x \,\frac{\textup{d}s}{s} \,\frac{\textup{d}t}{t} .
\end{align*}
By the Cauchy--Schwarz inequality in $t,s,x,u,z,z'$ and the symmetry between $z$ and $z'$, we estimate this by
\begin{align*}
& R^{-n} \sum_{l=1}^{n-m+1} \int_{\varepsilon}^{1} \int_{t\lambda/(10e)}^{t\lambda/10} \int_{\R^{m-1}} \int_{\R^{m-1}} \int_{\R^{n-m+1}} \\
& \quad \Big( \int_{\R^{n-m+1}} f(x,y;u) \mathbbm{h}^{(l)}_{s}(y-z) \,\textup{d}y \Big)^2 \,\lambda^{-m+1} \Big(\prod_{k=1}^{m-1}\mathbbm{1}_{[-\lambda,-\theta\lambda]\cup[\theta\lambda,\lambda]}(u_k)\Big) 
\,\textup{d}z \,\textup{d}u \,\textup{d}x \,\frac{\textup{d}s}{s} \,\frac{\textup{d}t}{t} ,
\end{align*}
where we have already eliminated $(\sigma_a \ast \mathbbm{g}_{r})(z-z')$ by evaluating the integral in $z'$.
Dominate $\mathbbm{1}_{[-1,1]}$ by the Gaussian $\mathbbm{g}_{s/\lambda}$ using \eqref{eq:lowerbd2} and recalling $s\sim_\varepsilon \lambda$; then dilate by $\lambda$ to obtain
\[ \lambda^{-1} \mathbbm{1}_{[-\lambda,-\theta\lambda]\cup[\theta\lambda,\lambda]}(u_k) \lesssim_\varepsilon \mathbbm{g}_{s}(u_k). \]
Then we expand out the square $(\int_{\R^{n-m+1}}\cdots\textup{d}y)^2$ and collapse back the convolution using \eqref{eq:convidhh}: 
\[ \sum_{l=1}^{n-m+1} \mathbbm{h}^{(l)}_{s} \ast \mathbbm{h}^{(l)}_{s} = \frac{1}{2} \mathbbm{k}_{s\sqrt{2}}. \]
That way we finally end up with
\begin{align*}
\big|\mathcal{N}_{\lambda}^{\varepsilon}(A;R)-\mathcal{N}_{\lambda}^{1}(A;R)\big|
& \lesssim_{\delta,\varepsilon} - R^{-n} \int_{\varepsilon\lambda/(10e)}^{\lambda/10} \int_{\R^{m-1}} \int_{\R^{m-1}} \int_{\R^{n-m+1}} \int_{\R^{n-m+1}} \\
& \quad f(x,y^0;u) f(x,y^1;u) \mathbbm{k}_{s\sqrt{2}}(y^0-y^1) \,\Big(\prod_{k=1}^{m-1}\mathbbm{g}_{s}(u_k)\Big) 
\,\textup{d}y^0 \,\textup{d}y^1 \,\textup{d}u \,\textup{d}x \,\frac{\textup{d}s}{s} .
\end{align*}
Due to our assumption on the exponential growth of the numbers $\lambda_j$, every positive number $s$ lies in a bounded number of intervals $[\varepsilon\lambda_j/(10e),\lambda_j/10]$, $1\leq j\leq J$, with a bound depending on $\varepsilon$ alone. Therefore, applying the last estimate with $\lambda=\lambda_j$ and summing in $j$, we get
\[ \sum_{j=1}^{J} \big|\mathcal{N}_{\lambda_j}^{\varepsilon}(A;R)-\mathcal{N}_{\lambda_j}^{1}(A;R)\big| \lesssim_{\delta,\varepsilon} R^{-n} \,\Theta^{(m)}_{1,\ldots,1,\sqrt{2}}(\mathbbm{1}_A). \]
Here we define, more generally,
\begin{align*}
\Theta^{(k)}_{\gamma_1,\ldots,\gamma_m}(F) :=
- \int_{0}^{\infty} \int_{\mathbb{R}^{d_1}\times\mathbb{R}^{d_1}\times\cdots\times\mathbb{R}^{d_m}\times\mathbb{R}^{d_m}} \Big( \prod_{(r_1,\ldots,r_n)\in\{0,1\}^m} F(x_1^{r_1}, \ldots, x_m^{r_m}) \Big)
\, \mathbbm{k}_{s\gamma_k}(x_k^0-x_k^1) & \\ 
\times \Big(\prod_{\substack{1\leq i\leq m\\ i\neq k}}\mathbbm{g}_{s\gamma_i}(x_i^0-x_i^1)\Big)
\,\textup{d}(x_1^0,x_1^1,x_2^0,x_2^1,\ldots,x_m^0,x_m^1) \,\frac{\textup{d}s}{s} & 
\end{align*}
for any $1\leq k\leq m$, any parameters $\gamma_1,\ldots,\gamma_m\in(0,\infty)$, and a real-valued bounded and compactly supported measurable function $F$ on $\R^{d_1+d_2+\cdots+d_m}$.
In our case, $d_1=\cdots=d_{m-1}=1$ and $d_m=n-m+1$,
but the greater generality will only make the arguments more elegant and symmetric.

The proof of Lemma \ref{lm:boxerr} will be complete once we know the estimate
\begin{equation}\label{eq:SBLestimate}
\Theta^{(k)}_{\gamma_1,\ldots,\gamma_m}(F) \leq 2\pi \|F\|_{\textup{L}^{2^m}(\mathbb{R}^d)}^{2^m} 
\end{equation}
and apply it to the function $F=\mathbbm{1}_A$, for which
\[ \|F\|_{\textup{L}^{2^m}(\mathbb{R}^n)}^{2^m} = |A| \leq R^n. \]
Indeed, $\Theta^{(k)}_{\gamma_1,\ldots,\gamma_m}$ is an example of an \emph{entangled singular integral form} \cite{Kov11,Dur15,Dur17} and inequality \eqref{eq:SBLestimate} is, up to an unimportant constant, a particular case of the recently studied \emph{singular Brascamp--Lieb estimates} \cite{DT20,DT21survey,DST22}. 
However, in just a few more lines, we can give a self-contained simple proof of this very special case. It has already appeared in \cite[\S7.2]{DK22} (based on the earlier ideas from \cite{Kov12,Dur15}), but we include it here for completeness.

First, observe that 
\[ \Theta^{(k)}_{\gamma_1,\ldots,\gamma_m}(F) \geq 0 \]
for each $1\leq k\leq m$ and every real-valued function $F$. 
This is seen by denoting
\begin{align*} 
f_k(x) = & \, f_k(x; x_1^0, x_1^1, \ldots, x_{k-1}^0, x_{k-1}^1, x_{k+1}^0, x_{k+1}^1, \ldots, x_m^0, x_m^1) \\
:= & \prod_{(r_1,\ldots,r_{k-1},r_{k+1},\ldots,r_n)\in\{0,1\}^{m-1}} F(x_1^{r_1}, \ldots, x_{k-1}^{r_{k-1}}, x, x_{k+1}^{r_{k+1}}, \ldots, x_m^{r_m}) 
\end{align*}
and using \eqref{eq:convidhh} again, this time to rewrite
\begin{align*}
& - \int_{\mathbb{R}^{d_k}} \int_{\mathbb{R}^{d_k}} f_k(x_k^0) f_k(x_k^1) \,\mathbbm{k}_{s\gamma_k}(x_k^0-x_k^1) \,\textup{d}x_k^0 \,\textup{d}x_k^1 \\
& = 2 \sum_{l=1}^{d_k} \int_{\mathbb{R}^{d_k}} \Big( \int_{\mathbb{R}^{d_k}} f_k(x) \,\mathbbm{h}^{(l)}_{s\gamma_k/\sqrt{2}}(x-z) \,\textup{d}x \Big)^2 \,\textup{d}z \geq 0.
\end{align*}
Essentially, we were merely undoing the last step of the computation that lead us to the particular case $\Theta^{(m)}_{1,\ldots,1,\sqrt{2}}$ in the first place.
Next, using the product rule for differentiation and the heat equation \eqref{eq:heatequ}, we obtain
\begin{equation}\label{eq:ooproductrule}
\frac{\partial}{\partial s} \prod_{i=1}^{m} \mathbbm{g}_{s\gamma_i}(u_i) = \sum_{k=1}^{m} \frac{1}{2\pi s} \mathbbm{k}_{s\gamma_k}(u_k)  
\prod_{\substack{1\leq i\leq m\\ i\neq k}}\mathbbm{g}_{s\gamma_i}(u_i),
\end{equation}
from which, integrating over $s\in(0,\infty)$, we easily get
\[ \sum_{k=1}^{m} \Theta^{(k)}_{\gamma_1,\ldots,\gamma_m}(F) = 2\pi \|F\|_{\textup{L}^{2^m}(\mathbb{R}^d)}^{2^m}. \]
Since the above summands are nonnegative, each of them is dominated by the whole sum, so \eqref{eq:SBLestimate} follows.
\end{proof}

\begin{proof}[Completion of the proof of Theorem \ref{thm:boxdensity}]
The standing assumption is
\[ \mathcal{N}_{\lambda_j}^{0}(A;R) = 0 \]
for every index $1\leq j\leq J$.
By Lemmas \ref{lm:boxstruct}--\ref{lm:boxerr} we can first choose a sufficiently small $\varepsilon>0$ (depending on $\delta$) such that
\[ \big|\mathcal{N}_{\lambda}^{0}(A;R)-\mathcal{N}_{\lambda}^{\varepsilon}(A;R)\big| \leq \frac{1}{3}\mathcal{N}_{\lambda}^{1}(A;R) \]
for every $0<\lambda\leq R$, and then a sufficiently large $J$ (depending on $\delta$ and $\varepsilon$) that the pigeonhole principle gives an index $1\leq j\leq J$ such that
\[ \big|\mathcal{N}_{\lambda_j}^{\varepsilon}(A;R)-\mathcal{N}_{\lambda_j}^{1}(A;R)\big| \leq \frac{1}{3}\mathcal{N}_{\lambda_j}^{1}(A;R). \]
Note that we were able to apply Lemma \ref{lm:boxstruct} as soon as $R\geq\lambda_J$.
Then decomposition \eqref{eq:decomposition} gives
\[ 0 = \mathcal{N}_{\lambda_j}^{0}(A;R) \geq \frac{1}{3}\mathcal{N}_{\lambda_j}^{1}(A;R) > 0 \]
for that same index $j$, which is a contradiction.
\end{proof}


\section{Hyperbolic embeddings: proof of Theorem \ref{thm:spacetimedensity}}
\label{sec:prooftrees}
Rotating the plane by $45^\circ$ and stretching it by factor $\sqrt{2}$ we can, more conveniently, replace the hyperbola $\mathscr{H}$ with the rotated hyperbola
\[ \mathscr{H}':\qquad x y = 1. \]
This will only affect the constant $C$ from the theorem statement.
Let $\mu$ be a Borel measure on $\R^2$ supported on $\mathscr{H}'$, defined via
\[ \int_{\R^2} f(x,y) \,\textup{d}\mu(x,y) = \int_{0}^{\infty} f\Big(x,\frac{1}{x}\Big) \,\varphi(x)\,\textup{d}x , \]
where $\varphi\colon(0,\infty)\to[0,\infty)$ is a compactly supported $\textup{C}^\infty$ function.
We also adjust $\varphi$ in a way that $\mu$ has total mass $1$.
Constants in the proof (appearing implicitly in the notation $\lesssim$ and $\gtrsim$) are allowed to depend on the exact choice of $\varphi$, which will not be notationally emphasized. 
They will also depend on the number $n$ and on the edge-lengths $a_1,\ldots,a_n\in(0,\infty)$, which are all regarded as fixed.

The pattern-counting form is, for $\lambda\in(0,\infty)$,
\[ \mathcal{N}_{\lambda}^{0}(A;R) := R^{-2} \int_{\R^2} \int_{(\R^2)^{n}} \mathbbm{1}_A(z) \Big(\prod_{k=1}^{n}\mathbbm{1}_A(z+u_k)\Big) 
\Big(\prod_{k=1}^{n} \textup{d}\mu_{a_k\lambda,a_k\lambda^{-1}}(u_k)\Big) \,\textup{d}z \]
and its smoothed counterpart is, after also introducing $\varepsilon\in(0,1]$,
\[ \mathcal{N}_{\lambda}^{\varepsilon}(A;R) := R^{-2} \int_{\R^2} \int_{(\R^2)^{n}} \mathbbm{1}_A(z) \Big(\prod_{k=1}^{n}\mathbbm{1}_A(z+u_k)\Big) 
\Big(\prod_{k=1}^{n} (\mu\ast \mathbbm{g}_{\varepsilon})_{a_k\lambda,a_k\lambda^{-1}}(u_k)\Big) \,\textup{d}u \,\textup{d}z. \]
Here $u$ is a shorter notation for the $n$-tuple $(u_1,\ldots,u_n)$.
As expected, $\mathcal{N}_{\lambda}^{0}(A;R)$ can again be obtained in the limit of $\mathcal{N}_{\lambda}^{\varepsilon}(A;R)$ as $\varepsilon\to0$, which is shown easily, as in Lemma \ref{lm:simpconv}.
For every $a,\lambda>0$ the support of the anisotropically dilated measure $\mu_{a\lambda,a\lambda^{-1}}$ lies on the hyperbola $\mathscr{H}'$ stretched by factor $a$.
Thus, $\mathcal{N}_{\lambda}^{0}(A;R)>0$ for any value of $\lambda>0$ clearly means that there exists the desired configuration \eqref{eq:hyperpoints} inside the set $A$.

Decomposition \eqref{eq:decomposition} will be applied one more time, so we proceed with bounding its three parts.

\begin{lemma}\label{lm:spacetimestruct}
For every $\lambda,R\in(0,\infty)$ such that $R^{-1}\leq\lambda\leq R$, and a measurable set $A\subseteq[0,R]^2$ of density $\delta=|A|/R^2$ we have
\[ \mathcal{N}_{\lambda}^{1}(A;R) \gtrsim \delta^{n+1}. \]
\end{lemma}

\begin{proof}[Proof of Lemma \ref{lm:spacetimestruct}]
We use a slight modification of \eqref{eq:convlowerbd} and scale it anisotropically to obtain
\[ (\mu_{a_k}\ast\mathbbm{g}_{a_k})_{\lambda,\lambda^{-1}} \gtrsim \mathbbm{1}_{[-\lambda,\lambda]\times[-\lambda^{-1},\lambda^{-1}]}. \]
Let $\mathcal{Q}$ be the partition of 
\[ [0,\lceil R/\lambda\rceil\lambda]\times[0,\lceil R\lambda\rceil\lambda^{-1}] \supseteq [0,R]^2 \] 
into $\lceil R/\lambda\rceil \lceil R\lambda\rceil\leq4R^2$ rectangles of size $\lambda\times\lambda^{-1}$.
Then
\begin{align*}
\mathcal{N}_{\lambda}^{1}(A;R) & \gtrsim R^{-2} \sum_{Q\in\mathcal{Q}} \int_{Q^{n+1}} \Big(\prod_{k=0}^{n}\mathbbm{1}_A(z_k)\Big) \,\textup{d}(z_k)_{0\leq k\leq n} 
= R^{-2} \sum_{Q\in\mathcal{Q}} |A\cap Q|^{n+1} \\
& \geq R^{-2} (\mathop{\textup{card}}\mathcal{Q})^{-n} \Big(\sum_{Q\in\mathcal{Q}} |A\cap Q| \Big)^{n+1}
\geq R^{-2} (4R^2)^{-n} |A|^{n+1} \sim \delta^{n+1}. \qedhere
\end{align*}
\end{proof}

\begin{lemma}\label{lm:spacetimeunif}
For every $\lambda,R\in(0,\infty)$, $\varepsilon\in(0,1]$, and a measurable set $A\subseteq[0,R]^2$ of density $\delta$ we have
\[ \big|\mathcal{N}_{\lambda}^{0}(A;R)-\mathcal{N}_{\lambda}^{\varepsilon}(A;R)\big| \lesssim \delta \varepsilon^{1/2}. \]
\end{lemma}

\begin{proof}[Proof of Lemma \ref{lm:spacetimeunif}]
Using the fundamental theorem of calculus, the heat equation \eqref{eq:heatequ}, and the product rule for differentiation, similarly as in \eqref{eq:ooproductrule}, we can write, for $0<\tau<\varepsilon$,
\begin{align*}
\mathcal{N}_{\lambda}^{\tau}(A;R)-\mathcal{N}_{\lambda}^{\varepsilon}(A;R) = 
& - R^{-2} \sum_{j=1}^{n} \int_{\tau}^{\varepsilon} \int_{\R^2} \int_{(\R^2)^{n}}  
\mathbbm{1}_A(z) \Big(\prod_{k=1}^{n}\mathbbm{1}_A(z+u_k)\Big) \\
& \times (\mu\ast \mathbbm{k}_{t})_{a_j\lambda,a_j\lambda^{-1}}(u_j) 
\Big(\prod_{\substack{1\leq k\leq n\\k\neq j}} (\mu\ast \mathbbm{g}_{t})_{a_k\lambda,a_k\lambda^{-1}}(u_k)\Big) 
\,\textup{d}u \,\textup{d}z \,\frac{\textup{d}t}{2\pi t} .
\end{align*}
A standard computation using Plancherel's theorem rewrites
\[ \int_{\R^2} \int_{\R^2} 
\mathbbm{1}_A(z) \mathbbm{1}_A(z+u_j)
(\mu\ast \mathbbm{k}_{t})_{a_j\lambda,a_j\lambda^{-1}}(u_j)
\,\textup{d}u_j \,\textup{d}z \]
as
\[ \int_{\R^2} \big|\widehat{\mathbbm{1}_A}(\zeta,\eta)\big|^2 \,\widehat{\mu}(a_j\lambda\zeta,a_j\lambda^{-1}\eta)
\,\widehat{\mathbbm{k}}(t a_j \lambda\zeta, t a_j \lambda^{-1}\eta)
\,\textup{d}(\zeta,\eta). \]
By \eqref{eq:measurewithk}, the absolute value of the last integral is (up to a constant) at most
\[ t^{1/2} \big\|\widehat{\mathbbm{1}_A}\big\|_{\textup{L}^2(\R^2)}^2  
= t^{1/2} \|\mathbbm{1}_A\|_{\textup{L}^2(\R^2)}^2 = t^{1/2} |A|. \]
That way we end up with
\begin{align*}
& \big|\mathcal{N}_{\lambda}^{\tau}(A;R)-\mathcal{N}_{\lambda}^{\varepsilon}(A;R)\big| \\
& \lesssim R^{-2} |A| \sum_{j=1}^{n} \int_{\tau}^{\varepsilon} \int_{(\R^2)^{n-1}}  
\Big(\prod_{\substack{1\leq k\leq n\\k\neq j}} (\mu\ast \mathbbm{g}_{t})_{a_k\lambda,a_k\lambda^{-1}}(u_k)\Big) 
\,\textup{d}(u_k)_{k\neq j} \,\frac{\textup{d}t}{t^{1/2}} \\
& \lesssim R^{-2} |A| \varepsilon^{1/2}
= \delta \varepsilon^{1/2},
\end{align*}
where we first integrated freely in the variables $u_k$ and then simply evaluated the integral in $t$.
It remains to let $\tau\to0$.
\end{proof}

Before we finally turn to the error part, let us prove a very concrete estimate for the Fourier transform $\widehat{\mathbbm{k}}$ of the Laplacian of the two-dimensional Gaussian. Despite being easy, in this property the hyperbolic scaling from the problem formulation is finally used.

\begin{lemma}\label{lm:kscalesnicely}
The estimate
\[ \int_{0}^{\infty} \Big|\widehat{\mathbbm{k}}\Big(s\zeta,\frac{\eta}{s}\Big)\Big| \,\frac{\textup{d}s}{s} \lesssim 1 \]
holds for every $(\zeta,\eta)\in\R^2$.
\end{lemma}

\begin{proof}[Proof of Lemma \ref{lm:kscalesnicely}]
From the last formula in \eqref{eq:Ftransforms} we see that the given integral equals
\[ \mathscr{K}(\zeta,\eta) := 4\pi^2 \int_{0}^{\infty} (s^2\zeta^2+s^{-2}\eta^2) e^{-\pi(s^2\zeta^2+s^{-2}\eta^2)} \,\frac{\textup{d}s}{s} . \]
In the case $\zeta\eta\neq0$, substitution $t=s\sqrt{|\zeta|/|\eta|}$ gives
\begin{align*}
\mathscr{K}(\zeta,\eta) & = 4\pi^2 |\zeta\eta| \int_{0}^{\infty} (t^2+t^{-2}) e^{-\pi|\zeta\eta|(t^2+t^{-2})} \,\frac{\textup{d}t}{t} \\
& \leq 8\pi^2 |\zeta\eta| \Big( \int_{1}^{\infty} t^2 e^{-\pi|\zeta\eta|t^2} \,\frac{\textup{d}t}{t} + \int_{0}^{1} t^{-2} e^{-\pi|\zeta\eta|t^{-2}} \,\frac{\textup{d}t}{t} \Big) \\
& \leq 16\pi^2 |\zeta\eta| \int_{0}^{\infty} t e^{-\pi|\zeta\eta|t^2} \,\textup{d}t = 8\pi.
\end{align*}
An even easier computation yields
$\mathscr{K}(\zeta,0)=2\pi$ for $\zeta\neq0$,
$\mathscr{K}(0,\eta)=2\pi$ for $\eta\neq0$,
and $\mathscr{K}(0,0)=0$.
\end{proof}

We will appreciate Lemma \ref{lm:kscalesnicely} more after the discussion in Section \ref{sec:remarksanalysts} on complications that the hyperbolic setting brings to some related analytical problems.
Now we can control the remaining term from the decomposition \eqref{eq:decomposition}.

\begin{lemma}\label{lm:spacetimeerr}
For every $R\in(0,\infty)$, $\varepsilon\in(0,1]$, and a measurable set $A\subseteq[0,R]^2$ of density $\delta$ we have
\[ \int_{\R} \big(\mathcal{N}_{e^\alpha}^{\varepsilon}(A;R)-\mathcal{N}_{e^\alpha}^{1}(A;R)\big)^2 \,\textup{d}\alpha \lesssim \delta^2 \Big(\log\frac{1}{\varepsilon}\Big)^2. \]
\end{lemma}

\begin{proof}[Proof of Lemma \ref{lm:spacetimeerr}]
Using the computation from the previous proof, we rewrite $\mathcal{N}_{\lambda}^{\varepsilon}(A;R)-\mathcal{N}_{\lambda}^{1}(A;R)$ as
\begin{align*}
- R^{-2} \sum_{j=1}^{n} \int_{\varepsilon}^{1} \int_{\R^2} \int_{(\R^2)^{n}}  
& \mathbbm{1}_A(z) \Big(\prod_{k=1}^{n}\mathbbm{1}_A(z+u_k)\Big) \\
& \times (\mu\ast \mathbbm{k}_{t})_{a_j\lambda,a_j\lambda^{-1}}(u_j) 
\Big(\prod_{\substack{1\leq k\leq n\\k\neq j}} (\mu\ast \mathbbm{g}_{t})_{a_k\lambda,a_k\lambda^{-1}}(u_k)\Big) 
\,\textup{d}u \,\textup{d}z \,\frac{\textup{d}t}{2\pi t} .
\end{align*}
For each fixed $1\leq j\leq n$ we introduce the variable $z'=z+u_j$, expand applying \eqref{eq:convidab} and \eqref{eq:convidaa},
\begin{align*}
- (\mu\ast \mathbbm{k}_{t})_{a_j\lambda,a_j\lambda^{-1}}(u_j) & = - (\mu\ast \mathbbm{k}_{t})_{a_j\lambda,a_j\lambda^{-1}}(z'-z) \\
& = 4 \sum_{l=1}^{2} \int_{\R^2} \mathbbm{h}^{(l)}_{a_jt\lambda/2,a_jt\lambda^{-1}/2}(z-w) 
\mathbbm{h}^{(l)}_{a_jt\lambda/2,a_jt\lambda^{-1}/2}(z'-w') \\
& \qquad\qquad\quad \times (\mu\ast \mathbbm{g}_{t/\sqrt{2}})_{a_j\lambda,a_j\lambda^{-1}}(w-w') \,\textup{d}w \,\textup{d}w' ,
\end{align*}
and use the Cauchy--Schwarz inequality to obtain
\begin{equation}\label{eq:justCS}
\big(\mathcal{N}_{\lambda}^{\varepsilon}(A;R)-\mathcal{N}_{\lambda}^{1}(A;R)\big)^2 \lesssim R^{-4} \mathcal{A}\, \mathcal{B}, 
\end{equation}
where
\begin{align*}
\mathcal{A} := \sum_{j=1}^{n} \sum_{l=1}^{2} & \int_{\varepsilon}^{1} \int_{(\R^2)^{n-1}} \int_{\R^2} \int_{\R^2} 
\bigg( \int_{\R^2} \mathbbm{1}_A(z) \Big(\prod_{\substack{1\leq k\leq n\\k\neq j}}\mathbbm{1}_A(z+u_k)\Big) 
\mathbbm{h}^{(l)}_{a_jt\lambda/2,a_jt\lambda^{-1}/2}(z-w) \,\textup{d}z \bigg)^2 \\
& \times (\mu\ast \mathbbm{g}_{t/\sqrt{2}})_{a_j\lambda,a_j\lambda^{-1}}(w-w') \Big(\prod_{\substack{1\leq k\leq n\\k\neq j}}(\mu\ast \mathbbm{g}_{t})_{a_k\lambda,a_k\lambda^{-1}}(u_k)\Big) 
\,\textup{d}w \,\textup{d}w' \,\textup{d}(u_k)_{k\neq j} \,\frac{\textup{d}t}{t} 
\end{align*}
and
\begin{align*}
\mathcal{B} := \sum_{j=1}^{n} \sum_{l=1}^{2} & \int_{\varepsilon}^{1} \int_{(\R^2)^{n-1}} \int_{\R^2} \int_{\R^2} 
\bigg( \int_{\R^2} \mathbbm{1}_A(z') 
\,\mathbbm{h}^{(l)}_{a_jt\lambda/2,a_jt\lambda^{-1}/2}(z'-w') \,\textup{d}z' \bigg)^2 \\
& \times (\mu\ast \mathbbm{g}_{t/\sqrt{2}})_{a_j\lambda,a_j\lambda^{-1}}(w-w') \Big(\prod_{\substack{1\leq k\leq n\\k\neq j}}(\mu\ast \mathbbm{g}_{t})_{a_k\lambda,a_k\lambda^{-1}}(u_k)\Big) 
\,\textup{d}w \,\textup{d}w' \,\textup{d}(u_k)_{k\neq j} \,\frac{\textup{d}t}{t} .
\end{align*}

We are controlling $\mathcal{A}$ rather crudely. 
First, we eliminate the convolution $\mu\ast \mathbbm{g}_{t/\sqrt{2}}$ by integrating out $w'$.
Then we bound
\begin{align*}
& \int_{\R^2} \bigg( \int_{\R^2} \mathbbm{1}_A(z) \Big(\prod_{\substack{1\leq k\leq n\\k\neq j}}\mathbbm{1}_A(z+u_k)\Big) 
\mathbbm{h}^{(l)}_{a_jt\lambda/2,a_jt\lambda^{-1}/2}(z-w) \,\textup{d}z \bigg)^2 \,\textup{d}w \\
& \leq \Big\| \mathbbm{1}_A \ast \big|\mathbbm{h}^{(l)}_{a_jt\lambda/2,a_jt\lambda^{-1}/2}\big| \Big\|_{\textup{L}^2(\R^2)}^2 
\leq \|\mathbbm{1}_A\|_{\textup{L}^2(\R^2)}^2 \|\mathbbm{h}^{(l)}\|_{\textup{L}^1(\R^2)}^2 \lesssim |A|,
\end{align*}
where we used Young's inequality \eqref{eq:Youngineq}.
Finally, we integrate in the remaining variables to get
\[ \mathcal{A} \lesssim |A| \log\frac{1}{\varepsilon}. \]
When dealing with $\mathcal{B}$ we begin by integrating out the variables $w$ and $u_k$ for each $k\neq j$. 
Then we apply \eqref{eq:convidaa} and Plancherel's theorem to what is left, obtaining
\[ \mathcal{B} \leq -\frac{1}{2} \sum_{j=1}^{n} \int_{\varepsilon}^{1} \int_{\R^2} \big|\widehat{\mathbbm{1}_A}(\zeta,\eta)\big|^2 \,\widehat{\mathbbm{k}}\Big(\frac{a_j t\zeta}{\sqrt{2}} \lambda, \frac{a_j t\eta}{\sqrt{2}} \frac{1}{\lambda}\Big) \,\textup{d}(\zeta,\eta) \,\frac{\textup{d}t}{t}. \]
This, \eqref{eq:justCS}, and Lemma \ref{lm:kscalesnicely} altogether imply
\begin{align*}
& \int_{0}^{\infty} \big(\mathcal{N}_{\lambda}^{\varepsilon}(A;R)-\mathcal{N}_{\lambda}^{1}(A;R)\big)^2 \,\frac{\textup{d}\lambda}{\lambda} \\
& \lesssim R^{-4} |A| \Big(\log\frac{1}{\varepsilon}\Big)
\sum_{j=1}^{n} \int_{\varepsilon}^{1} \int_{\R^2} \big|\widehat{\mathbbm{1}_A}(\zeta,\eta)\big|^2 \bigg( \int_{0}^{\infty}\Big|\widehat{\mathbbm{k}}\Big(\frac{a_j t\zeta}{\sqrt{2}} \lambda, \frac{a_j t\eta}{\sqrt{2}} \frac{1}{\lambda}\Big)\Big| \,\frac{\textup{d}\lambda}{\lambda} \bigg) \,\textup{d}(\zeta,\eta) \,\frac{\textup{d}t}{t} \\
& \lesssim R^{-4} |A| \Big(\log\frac{1}{\varepsilon}\Big)
\int_{\varepsilon}^{1} |A| \,\frac{\textup{d}t}{t} 
= \delta^2 \Big(\log\frac{1}{\varepsilon}\Big)^2.
\end{align*}
It remains to change the integration variable, $\lambda=e^{\alpha}$.
\end{proof}

\begin{proof}[Proof of Theorem \ref{thm:spacetimedensity}]
For a given $\delta\in(0,1]$, Lemmas \ref{lm:spacetimestruct} and \ref{lm:spacetimeunif} allow us to find $\varepsilon\sim\delta^{2n}$ such that
\[ \big|\mathcal{N}_{\lambda}^{0}(A;R)-\mathcal{N}_{\lambda}^{\varepsilon}(A;R)\big| \leq \frac{1}{3} \mathcal{N}_{\lambda}^{1}(A;R) \]
holds for every $1\leq\lambda\leq R$.
Afterwards, by taking 
\[ J \sim \frac{1}{\delta^{2n}} \Big(1+\log\frac{1}{\delta}\Big)^2 \]
and restricting the integration in Lemma \ref{lm:spacetimeerr} to $[0,J]$, we can find a particular $\beta\in[0,J]$ such that
\[ \big|\mathcal{N}_{e^{\beta}}^{\varepsilon}(A;R)-\mathcal{N}_{e^{\beta}}^{1}(A;R)\big| 
\leq \Big(\frac{1}{J}\int_0^J \big(\mathcal{N}_{e^{\alpha}}^{\varepsilon}(A;R)-\mathcal{N}_{e^{\alpha}}^{1}(A;R)\big)^2 \,\textup{d}\alpha\Big)^{1/2}
\leq \frac{1}{3} \mathcal{N}_{e^{\beta}}^{1}(A;R). \]
Decomposition \eqref{eq:decomposition} then shows 
\[ \mathcal{N}_{e^{\beta}}^{0}(A;R) \geq \frac{1}{3}\mathcal{N}_{e^{\beta}}^{1}(A;R) \gtrsim \delta^{n+1} > 0 \]
and we are done.
In the last display we needed $R\geq e^J\geq e^{\beta}$ in order to be able to apply Lemma \ref{lm:spacetimestruct}. 
This is certainly satisfied whenever 
\[ R \geq \exp\big(C\delta^{-2n}(1+\log\delta^{-1})^2\big) \]
for a sufficiently large constant $C>0$.
On the other hand, the condition from the theorem statement can be rewritten as
\[ R \geq \exp\big(C\delta^{-2n-1}), \]
which is a stronger requirement than needed.
\end{proof}


\section{Remarks for harmonic analysts}
\label{sec:remarksanalysts}
One might wonder about the analytical reason why a density result for area $1$ rectangles is not possible in any dimension, while we were able to prove such a result for area 1 right-angled triangles.

Namely, take Schwartz functions $\varphi$ and $\psi$, respectively defined on $\R^{d_1}$ and $\R^{d_2}$, that have compact frequency supports and suppose that at least one of these supports does not contain the origin.
Also, take some $4$-tuple $(p_1,p_2,p_3,p_4)$ of Lebesgue exponents in $[1,\infty]$ satisfying the H\"{o}lder scaling, 
\[ \sum_{k=1}^{4} \frac{1}{p_k} = 1 . \]
In what follows, $0<r<R$ will be certain parameters that determine the range of the scales $t$ that we want to study.
The best constant $C_{r,R}$ in the quadrilinear estimate
\begin{align*}
\Big| \int_{r}^{R} \int_{\R^{d_1}} \int_{\R^{d_1}} \int_{\R^{d_2}} \int_{\R^{d_2}} 
& f_1(x,y) f_2(x,y') f_3(x',y) f_4(x',y') \\
& \times\varphi_{t}(x-x') \psi_{1/t}(y-y') 
\,\textup{d}y \,\textup{d}y' \,\textup{d}x \,\textup{d}x' \,\frac{\textup{d}t}{t} \Big| 
\leq C_{r,R} \prod_{k=1}^{4} \|f_k\|_{\textup{L}^{p_k}(\R^{d_1+d_2})} 
\end{align*}
grows like a multiple of $\log(R/r)$ as $R/r\to\infty$. This is not any better than in the trivial estimate obtained from H\"{o}lder's inequality applied for each fixed $t$. 
Rather than giving a direct argument for this observation, one could argue that any nontrivial cancellation, i.e., a bound of the form $o(\log(R/r))$ as $R/r\to\infty$, would translate to a positive result for rectangles of area $1$. We would simply adjust the proof of Theorem \ref{thm:boxdensity} to work for fixed area rectangles, making it similar to the proof of Theorem \ref{thm:simpldensity}, which we know to be impossible due to Theorems \ref{thm:colorR2} and \ref{thm:colorRn}.

A direct argument goes as follows. For simplicity we work in two dimensions, i.e., $d_1=d_2=1$. For some $M>0$ take $f_1\colon\R^2\to\mathbb{C}$ to be
\[ f_1(x,y) := e^{2\pi \ii x y} \mathbbm{g}\Big(\frac{x}{M}\Big) \mathbbm{g}\Big(\frac{y}{M}\Big) \]
and let $f_2:=\overline{f_1}$, $f_3:=\overline{f_1}$, $f_4:=f_1$.
The right hand side of the quadrilinear estimate is comparable to $C_{r,R}M^2$, while
{\allowdisplaybreaks
\begin{align*}
& \lim_{M\to\infty} \frac{1}{M^2} \int_{r}^{R} \int_{\R^4} f_1(x,y) f_2(x,y') f_3(x',y) f_4(x',y') 
\varphi_{t}(x-x') \psi_{1/t}(y-y')  \,\textup{d}(x,x',y,y') \,\frac{\textup{d}t}{t} \\
& \quad\text{[substitute $u=x-x'$, $v=y-y'$]} \\
& = \lim_{M\to\infty} \frac{1}{4} \int_{r}^{R} \int_{\R^2} e^{2\pi \ii u v}
\varphi_{t}(u) \psi_{1/t}(v) \mathbbm{g}\Big(\frac{u}{M}\Big) \mathbbm{g}\Big(\frac{v}{M}\Big) \,\textup{d}(u,v) \,\frac{\textup{d}t}{t} \\
& = \frac{1}{4} \Big(\log\frac{R}{r}\Big) \int_{\R} \widehat{\varphi}(-v) \psi(v) \,\textup{d}v,
\end{align*}}
so we only need to make sure that $\varphi$ and $\psi$ are such that the last integral is nonzero.
The fact that the above quadrilinear form does not satisfy any nontrivial estimates is in sharp contrast with results for the so-called \emph{twisted paraproduct}, where the scales are aligned as $\varphi_{t},\psi_{t}$, rather than $\varphi_{t},\psi_{1/t}$. It was shown to be bounded, at least when the exponents $p_k$ are greater than $2$, by Durcik \cite{Dur15,Dur17} and, in a dyadic model, previously also by the author \cite{Kov12}.

The form associated with area $1/2$ triangles is obtained by taking $f_4$ to be constantly $1$ and the associated exponent $p_4$ to be $\infty$. The resulting trilinear inequality
\begin{align*} 
\Big| \int_{r}^{R} \int_{\R^{d_1}} \int_{\R^{d_1}} \int_{\R^{d_2}} \int_{\R^{d_2}} 
f_1(x,y) f_2(x,y') f_3(x',y)
\varphi_{t}(x-x') \psi_{1/t}(y-y') 
\,\textup{d}y \,\textup{d}y' \,\textup{d}x \,\textup{d}x' \,\frac{\textup{d}t}{t} \Big| & \\
\leq C'_{r,R} \prod_{k=1}^{3} \|f_k\|_{\textup{L}^{p_k}(\R^{d_1+d_2})} &
\end{align*}
now actually has non-trivial cancellation over the scales $t$. Its best constants $C'_{r,R}$ grow at most like $O((\log(R/r))^{1/2})$, which is easily seen similarly as in the proof of Lemma \ref{lm:simplerr}: by separating one of the functions and applying the classical square function bound to it.
However, one could still ask if the above trilinear form is also bounded as $r\to0+$ and $R\to\infty$, or, equivalently, if the constants $C'_{r,R}$ are even independent of the scale truncation parameters $r$ and $R$.
If, on the other hand, the scales were aligned as $\varphi_{t},\psi_{t}$, this would again be the bound for the twisted paraproduct and it would follow from \cite[Theorem 1]{Kov12}.

Finally, if only functions $f_1$ and $f_4$ remain, then it is indeed easy to use Plancherel's theorem and Lemma \ref{lm:kscalesnicely} to prove the estimate
\begin{align*}
\Big| \int_{r}^{R} \int_{\R^{d_1}} \int_{\R^{d_1}} \int_{\R^{d_2}} \int_{\R^{d_2}} 
f_1(x,y) f_4(x',y') \varphi_{t}(x-x') \psi_{1/t}(y-y') 
\,\textup{d}y \,\textup{d}y' \,\textup{d}x \,\textup{d}x' \,\frac{\textup{d}t}{t} \Big| & \\
= \Big| \int_{r}^{R} \big\langle f_1 , f_4 \ast (\varphi_t\otimes\psi_{1/t}) \big\rangle_{\textup{L}^2(\R^{d_1+d_2})} \,\frac{\textup{d}t}{t} \Big|
\leq C \|f_1\|_{\textup{L}^{2}(\R^{d_1+d_2})} \|f_4\|_{\textup{L}^{2}(\R^{d_1+d_2})} & ,
\end{align*}
which is now uniform in $r$ and $R$.
A manifestation of this property has been used in the proof of Theorem \ref{thm:spacetimedensity}: in the proof of Lemma \ref{lm:spacetimeerr} we were able to integrate over all scales $e^{\alpha}$, even without squaring the difference first.

We intentionally do not discuss this matter further or in more detail here. It could be interesting on its own to define a class of ``volume-preserving'' or ``time reversed'' (alluding to the appearance of scale $1/t$) multilinear singular integral operators, and then attempt to classify those that are bounded and those that merely possess non-trivial cancellation over the scales.

\section{Tabular summary}
\label{sec:summary}
This section offers a condensed landscape of most (but not all) density results discussed in the present paper. Our goal is to emphasize distinctions among different point configurations and different types of ``largeness'' conditions imposed on the containing set.

\begin{table}
\centering
\begin{tabular}{|c|c||c|c|c|c|}
\hline
\multicolumn{2}{|c||}{\multirow{2}{*}{\textbf{configuration}\textbackslash\textbf{set}}} & \multicolumn{2}{c|}{positive upper density} & \multicolumn{2}{c|}{infinite measure} \\ 
\cline{3-6} 
\multicolumn{2}{|c||}{} & minimal dim. & increased dim. & minimal dim. & increased dim. \\ \hline \hline
\multirow{2}{*}{segment} 
& all & \multicolumn{4}{|c|}{ A (\cite{Soi24:book}) } \\ \cline{2-6}
& large & A\footnotemark & U (\cite{FKW90:dist,FM86:dist,B86:roth}) & \multicolumn{2}{|c|}{A\footnotemark} \\ \hline
\multicolumn{2}{|c||}{right triangle} & \multicolumn{4}{|c|}{ U (\cite{Koi25}) } \\ \hline
\multicolumn{2}{|c||}{simplex} & \multicolumn{4}{|c|}{ U (\cite{Erd78}) } \\ \hline
\multirow{2}{*}{box} 
& all & \multicolumn{4}{|c|}{ A (Thm.\,\ref{thm:colorRn}) } \\ \cline{2-6}
& large & ? ($\approx$Prob.\,\ref{prob:5}) & U (\cite{LM16:prod},\,Thm.\,\ref{thm:boxdensity}) & A (Thm.\,\ref{thm:parallel}) & ? \\ \hline
\multicolumn{2}{|c||}{parallelotope} & \multicolumn{2}{|c|}{? ($\approx$Prob.\,\ref{prob:parallel})} & A (Thm.\,\ref{thm:parallel}) & ? \\ \hline
\multirow{2}{*}{hypercube graph} 
& all & A (Thm.\,\ref{thm:hypercubes}) & ? & A (Thm.\,\ref{thm:hypercubes}) & ?\footnotemark \\ \cline{2-6}
& large & U (\cite{KP23}) & U (\cite{LM20}) & \multicolumn{2}{|c|}{?} \\ \hline
\multicolumn{2}{|c||}{hyperbolic segment} & \multicolumn{2}{|c|}{U (\cite{BMK17},\,Thm.\,\ref{thm:bmk})} & \multicolumn{2}{|c|}{A (Rem.\,\ref{rem:onstars})} \\ \hline
\multicolumn{2}{|c||}{hyperbolic star} & \multicolumn{2}{|c|}{U (Thm.\,\ref{thm:spacetimedensity})} & \multicolumn{2}{|c|}{A (Rem.\,\ref{rem:onstars})} \\ \hline
\end{tabular}
\caption{Landscape of results.}
\label{tab:landscape}
\end{table}

In Table \ref{tab:landscape} we discuss configurations that are vertex-sets of a segment, a right triangle, a simplex, a rectangular box, or a parallelotope, each of a fixed length/area/volume. Additionally, we consider a hypercube graph with a prescribed product of edge-lengths, a hyperbolic unit segment, i.e., a pair of points the difference of which lies on the standard hyperbola, or a hyperbolic star made of several hyperbolic segments.
We mark whether they can be found in every set of positive upper density, or already in every set of infinite measure. 
When we say that a configuration is \emph{unavoidable} (U), this means that it is found in all sets that are large in the appropriate sense.
Otherwise a configuration is \emph{avoidable} (A). 
Sometimes we make a further distinction by trying to find configurations that span polytopes of either \emph{all} prescribed volumes or merely only sufficiently \emph{large} ones.
Moreover, different results are sometimes available depending on whether the ambient dimension is the \emph{minimal} one that can host the configuration, or an \emph{increased} one. 
The reader should understand that an isolated question mark (?) simply means that the author is not aware of any relevant literature, including the possibility that further investigation reveals the entry as being easy.
The open problems listed in the introduction have already attracted interest, having been raised on several previous occasions.

\footnotetext[1]{A trivial one-dimensional set avoiding all half-integer distances is the union of intervals $[k-1/5,k+1/5]$ centered at integers $k$.}
\footnotetext[2]{In any number of dimensions we can simply take a union of balls of small radius $\varepsilon>0$ around the points $(k,0,\ldots,0)$, $k\in\mathbb{Z}$.}
\footnotetext[3]{For the particular case of the $1$-skeleton of the $2$-cube with prescribed product of edge-lengths $a_1 a_2$, an avoiding set of infinite measure in $\mathbb{R}^n$ can be constructed as a union of balls around integer points $(k_1,\ldots,k_n)\in\mathbb{Z}^n$ with radii $\varepsilon/(|k_1|+\cdots+|k_n|+1)$, where $\varepsilon>0$ is sufficiently small.}


\section*{Acknowledgments}
This work was supported in part by the Croatian Science Foundation under the project HRZZ-IP-2022-10-5116 (FANAP). The author is grateful to Mohammad Bardestani, Adrian Beker, James Davies, Polona Durcik, Keivan Mallahi-Karai, and Rudi Mrazovi\'{c} for several very useful discussions, and to Boris Alexeev for helping formalize the proof of Theorem \ref{thm:colorR2} in Lean. Finally, the author thanks the anonymous referees for numerous suggestions on improving the presentation.


\bibliography{monorectangles}{}
\bibliographystyle{plainurl}

\end{document}